\renewcommand{\leq}{\leqslant}
\renewcommand{\geq}{\geqslant}
\newtheorem{thm}{Theorem}
\newtheorem{prop}{Proposition}
\newtheorem{lem}[prop]{Lemma}
\newtheorem{cor}[prop]{Corollary}
\theoremstyle{definition}
\newtheorem{Def}{Definition}
\theoremstyle{remark}
\newtheorem{rem}{Remark}
\global\mdfdefinestyle{exampledefault}{%
linecolor=lightgray,linewidth=1pt,%
leftmargin=0.1cm,rightmargin=0.1cm,
}
\newenvironment{framednote}[1]{%
\mdfsetup{%
frametitle={\colorbox{white}{\,#1\,}},
frametitleaboveskip=-\ht\strutbox,
frametitlealignment=\raggedright
}%
\begin{mdframed}[style=exampledefault]
}{\end{mdframed}}
\newif\iflibus@sansmath
\DeclareSymbolFont{LettersLibertinus}{LS1}{libertinust1math}{m}{it}
\DeclareMathSymbol{\tau}{\mathord}{LettersLibertinus}{"1C}
\DeclareMathOperator{\BO}{\textit{O}}
\newcommand\BigO[1]{\BO{\hspace{-0.22em}\left(#1\right)}}
\DeclareMathOperator{\so}{\textit{o}}
\newcommand\smallo[1]{\so{\hspace{-0.15em}\left(#1\right)}}
\DeclareMathOperator{\Esp}{\mathbb{E}}
\newcommand\Expect[1]{\Esp{\hspace{-0.22em}\left[#1\right]}}
\DeclareMathOperator{\Pb}{\mathbb{P}}
\newcommand{\Prob}[1]{\Pb{\hspace{-0.22em}\left(#1\right)}}
\newcommand{\Var}[1]{\mathrm{Var}{\left(#1\right)}}
\newcommand{\indic}{\mathds{1}}
\newcommand\Indic[1]{\indic{_{\{#1\}}}}
\newcommand{\R}{\mathbb{R}}
\newcommand{\Z}{\mathbb{Z}}
\newcommand{\Q}{\mathbb{Q}}
\renewcommand{\P}{\mathbb{P}}
\newcommand{\dinf}{\mathrm{d}}
\renewcommand{\epsilon}{\varepsilon}
\newcommand{\dgh}{ \mathrm{d}_{ \mathrm{GH}}}
\newcommand{\tn}[2]{\tau^{(#1)}_{#2}}
\newcommand{\tc}[1]{\theta_{#1}}
\newcommand{\tcm}[2]{\theta^{(#1)}_{#2}}
\title{Random trees with local catastrophes: the Brownian case}
\author{Ariane Carrance\thanks{Fakultät für Mathematik, Universität Wien \hspace*{\fill} \href{mailto:ariane.carrance@univie.ac.at}{ariane.carrance@univie.ac.at}}, Jérôme Casse\thanks{LMO, Université Paris-Saclay  \hspace*{\fill} \href{mailto:jerome.casse.math@gmail.com}{jerome.casse.math@gmail.com}}, and Nicolas Curien\thanks{LMO, Université Paris-Saclay  \hspace*{\fill} \href{mailto:nicolas.curien@gmail.com}{nicolas.curien@gmail.com}}}
\date{\today}
\begin{document}
\maketitle

\begin{abstract}
We introduce and study a model of plane random trees generalizing the famous Bienaymé--Galton--Watson model by incorporating spatially correlated deaths, which we interpret as local catastrophes. More precisely, given a random variable $(B,H)$ with values in $  \{1,2,3,...\}^2$, given the state of the tree at some generation, the next generation is obtained (informally) by successively deleting $B$ individuals side-by-side and replacing them with $H$ new particles where the samplings are i.i.d. We prove that, in the critical case $ \mathbb{E}[B]= \mathbb{E}[H]$, and under a third moment condition on $B$ and $H$, the random trees coding the genealogy of the population model converge towards the Brownian Continuum Random Forest. Our approach does not use the classical height process or the Łukasiewicz exploration, but instead relies on the stochastic flow point of view introduced by Bertoin \& Le Gall~\cite{bertoinlegall1,bertoinlegall2,bertoinlegall3}.
\end{abstract}

\tableofcontents

\section{Introduction}
The study of discrete and continuous branching processes is a vast domain with many applications to population modeling. We refer for example to \cite{jagers,aldous-crt1,aldous-crt2,aldous-crt3,duquesne-legall-mono,lambert,abraham-delmas-notes,bansaye-simatos} for an overview. The simplest model of a branching process is the famous Bienaymé--Galton--Watson one,  where all particles behave independently of each other and reproduce according to some fixed offspring distribution. In this paper we consider a natural variant of this process where deaths are spatially correlated, which we can interpret as local catastrophes. More precisely, given a law $\rho $ on $\mathbb{Z}_{>0}^2$ with finite first moments called the "\textbf{brick distribution}", we define a discrete population evolution process (which does not satisfy the discrete branching property) as follows. Imagine that at generation $h$, there are $n$ particles alive which are ordered from left to right. We then sample independent identically distributed random variables $(B_i,H_i) : i \geq 1$   with law $\rho$ and form the new generation by starting from the left and iteratively deleting $B_i$ particles at generation $k$ and replacing them by $H_i$ particles at generation $k+1$ until there is no particles left at generation $k$. The successive generations are built similarly. See~\Cref{fig:definition} for an illustration.

\begin{figure}[!h]
\centering
 \includegraphics[width=13cm]{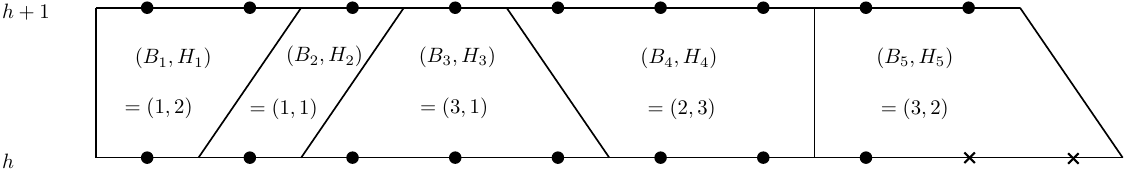}
 \caption{Illustration of the construction of the population evolution process from generation $k$ to $k+1$ given i.i.d.\ bricks sampled according to the law $\rho$. Notice that the last brick overshoot the population at generation $k$: we put crosses for fictitious individuals.}\label{fig:definition}
 \end{figure}
 
The random variables $(B_i,H_i)$ will be interpreted as   \textbf{bricks} with bottom length $B_i$ and top length $H_i$. Those bricks have flexible vertical parts and they are then stacked horizontally and "pushed to the left" on top of generation $k$ to obtain generation $k+1$.   The reader may have noticed that in this construction, the last brick may overshoot the population at generation $k$ and it is actually more convenient to directly define the model in an infinite population setting, where at each generation $k \geq 0$, we have an infinite number of particles indexed by $ \mathbb{Z}$. To preserve an invariance by translation, we now construct generation $k+1$ given generation $k$ using a bi-infinite sequence of ``bricks''
 $$\big((B_i,H_i) : i \leq -1 \big), \quad  ({B_0}^*,H_0), \quad \big((B_i,H_i)  : i \geq 1\big)$$ where $(B_i,H_i) : i \in \mathbb{Z}\backslash \{0\}$ are i.i.d. of law $\rho$ and $({B_0}^*,H_0)$  is the size-biasing of $(B,H)$ by the variable $B$ (we assumed that $B$ has finite expectation). The bricks are then stacked horizontally in the most obvious fashion and pinpointed by choosing a uniform edge in the bottom of $( {B_0}^*, H_0)$ and putting it on the root edge of the generation $k$. The root edge at generation $k+1$ is the left-most of the $H_0$ edges of this brick (see~\Cref{subsec:defs} for a more precise definition). We can then iterate and stack horizontally  i.i.d. rows obtained this way and get a \textbf{brick wall}\footnote{All in all, you're just another brick in the wall}.

It might not be obvious, but this brick wall actually encodes a forest of random trees $(T^\uparrow_i : i \in \mathbb{Z})$ obtained by adding the edges from the middle of the bottom-right edge of each brick to each middle of its top edges, see Figure \ref{fig:bricksall}. These trees, whose vertices are the points of $ (\mathbb{Z} + \frac{1}{2}) \times \mathbb{Z}_{\geq 0}$, are "ascending", and we can define a dual family of "descending" trees using a similar construction, see~\Cref{subsec:defs} for details. Beware, in general our random trees $T^\uparrow_i$ are not i.i.d.\ since they interact through their boundary bricks. 

It should be clear to the reader that the natural criticality assumption is 
\begin{displaymath}
  \mathbb{E}[B] = \mathbb{E}[H], \qquad ( \mathbf{critical}),
\end{displaymath}
which we will assume in these pages. Our main result is that despite their dependencies, the random  trees $(T^\uparrow_i : i \in \mathbb{Z})$ are  in the same universality class as critical Bienaymé--Galton--Watson trees with finite variance. To state the theorem properly let us consider the infinite tree $ \underline{F}^{\uparrow}$ obtained by grafting the forest of trees $(T^\uparrow_i : i \in \mathbb{Z})$ on the bi-infinite line $ \mathbb{Z}$. This random tree will be pointed at the origin $0$ and has almost surely two ends. We denote by $\lambda \cdot  \underline{F}^{\uparrow}$ the random pointed metric space whose point set is the vertex set of $ \underline{F}^{\uparrow}$ endowed with the graph distance $ \mathrm{d_{gr}}$ renormalized by $\lambda >0$. Its scaling limit will be given by the infinite Brownian forest $ \mathcal{F}_{\sigma}$ with parameter $\sigma>0$ obtained by grafting on the line $ \mathbb{R}$ a Poisson point process of random trees with intensity $ \sigma^{-2} \cdot \mathrm{d}t \cdot \mathbf{n}( \mathrm{d}\tau)$ where $ \mathbf{n}$ is Itô's excursion measure on points continuum random trees (CRT), see Section \ref{sec:forest} for details. This infinite continuum random tree is pointed at $0$ and called the Brownian forest with parameter $\sigma$. 

\begin{thm}[Convergence towards the Brownian forest] 
  Suppose that the critical condition holds: $\mathbb{E}[B] = \mathbb{E}[H]$, and that $\mathbb{E}[B^3], \mathbb{E}[H^3] < \infty$ and $\Prob{B=H} < 1$. Let us introduce 
$$ \sigma^2 = \frac{1}{ \mathbb{E}[B]}\Expect{(B-H)^2}.$$
Then we have the following convergence in law for the (local) pointed Gromov--Hausdorff convergence 
$$ \lambda \cdot \underline{F}^{\uparrow} \xrightarrow[\lambda \to0]{(d)}  \mathcal{F}_{\sigma}.$$
\label{thm:main}
\end{thm}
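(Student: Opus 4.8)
The plan is to follow the stochastic-flow philosophy of Bertoin \& Le Gall~\cite{bertoinlegall1,bertoinlegall2,bertoinlegall3}: encode the genealogy of the population model by a flow of increasing maps of $\mathbb{Z}$, prove that this flow, diffusively rescaled, converges to the flow of subordinators attached to the continuous-state branching process with mechanism $\psi(u)=\tfrac{\sigma^2}{2}u^2$ (a Feller branching diffusion of variance $\sigma^2$), and then transfer this to the pointed Gromov--Hausdorff convergence of the coded forests. For integers $0\leq s\leq t$ and $a\in\mathbb{Z}$, let $N_{s,t}(a)$ denote the (signed) number of edges at generation $t$ whose ancestral lineage meets generation $s$ inside $\{1,\dots,a\}$, with the mirror convention for $a<0$. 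Because distinct generation transitions are built from independent bricks, $(N_{s,t})_{0\leq s\leq t}$ has independent increments in the time variable and satisfies the cocycle identity $N_{s,t}=N_{u,t}\circ N_{s,u}$ for $s\leq u\leq t$, up to a rounding error coming from the single boundary brick shared by adjacent blocks; likewise, for fixed $s<t$ the increasing map $a\mapsto N_{s,t}(a)$ has, up to this same boundary effect, independent and stationary increments. This is the exact discrete analogue of a flow of subordinators, and it carries the whole metric structure of $\underline{F}^{\uparrow}$: graph distances to the line are generations, and the generation of the most recent common ancestor of two generation-$t$ edges of the same tree is the largest $u\leq t$ for which these edges lie in a single jump interval of $N_{u,t}$.

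The heart of the argument is the scaling limit of this flow. Going up one generation, a block of $m$ consecutive edges is covered by a random number of bricks concentrated around $m/\mathbb{E}[B]$, and the corresponding block one generation higher has size $m+\sum(H_i-B_i)$ with i.i.d.\ summands that are centred---this is exactly the criticality assumption $\mathbb{E}[B]=\mathbb{E}[H]$---and of variance $\mathbb{E}[(B-H)^2]$; a direct computation with the brick law moreover shows that the subtree rooted at a single edge has mean offspring $1$. Hence the one-step time-increment of $N_{s,\cdot}(a)$ is, conditionally on its current value $z$, approximately centred with variance $z\cdot\mathbb{E}[(B-H)^2]/\mathbb{E}[B]=z\,\sigma^2$: the effective branching is of critical Galton--Watson type with variance $\sigma^2$. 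Using the third-moment hypotheses $\mathbb{E}[B^3],\mathbb{E}[H^3]<\infty$ to control the fluctuations of the number of covering bricks, the rounding errors in the cocycle, and their accumulation over $\asymp\lambda^{-2}$ generations, a Donsker/Feller-type invariance principle then yields, for the rescaled flow $N^{(\lambda)}_{s,t}(a):=\lambda\,N_{\lfloor\lambda^{-2}s\rfloor,\lfloor\lambda^{-2}t\rfloor}(\lfloor\lambda^{-1}a\rfloor)$, convergence of the finite-dimensional distributions together with the tightness needed to identify the limit as the flow $(Z_{s,t})_{s\leq t}$ of subordinators with $t\mapsto Z_{s,t}(a)$ a Feller branching diffusion of variance $\sigma^2$ started from $a$. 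The same moment bounds also give a uniform-in-$\lambda$ second-moment estimate on $N^{(\lambda)}$ preventing any macroscopic mass from escaping to infinity.

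It then remains to read the Brownian forest off the limiting flow. By Bertoin \& Le Gall, the genealogy encoded by the subordinator flow with mechanism $\psi(u)=\tfrac{\sigma^2}{2}u^2$ is precisely the Brownian forest: the excursions of $a\mapsto Z_{0,t}(a)$, as $t$ runs over $\mathbb{R}_{\geq0}$, form the Poisson point process of continuum random trees grafted on $\mathbb{R}$ appearing in the definition of $\mathcal{F}_\sigma$ (with intensity $\sigma^{-2}\,\mathrm{d}t\,\mathbf{n}(\mathrm{d}\tau)$), while the internal lineage structure of the flow reconstructs the CRT carried by each such excursion. Combining this identification with the flow convergence of the previous step, and localising around the distinguished point $0$---only the finitely many $T^\uparrow_i$ with $|i|\lesssim\lambda^{-1}R$ can meet the ball of radius $R$, and after rescaling only boundedly many of them are non-degenerate---yields the announced convergence $\lambda\cdot\underline{F}^{\uparrow}\to\mathcal{F}_\sigma$ for the local pointed Gromov--Hausdorff topology; the bi-infinite, two-ended nature of $\underline{F}^{\uparrow}$ and the pointing at $0$ cause no extra trouble, the whole construction being translation invariant and symmetric.

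The main obstacle is the flow convergence itself: the model has no branching property, so one must show that the brick-by-brick dynamics produce in the limit exactly the Feller flow of variance $\sigma^2$ and no spurious noise. This means quantifying precisely how a block of $m$ edges spreads over $\approx m/\mathbb{E}[B]$ random bricks (with its own fluctuations), controlling the unique boundary brick shared between adjacent blocks and between successive applications of the cocycle, and propagating these estimates over $\asymp\lambda^{-2}$ generations---this is exactly where $\mathbb{E}[B^3],\mathbb{E}[H^3]<\infty$ is genuinely needed. A secondary, more routine difficulty is the transfer from flows to metric spaces in this non-compact, bi-infinite setting, for which the uniform tightness estimate above is used to guarantee that the ball of radius $R$ around $0$ is faithfully approximated by a bounded number of near-independent trees.
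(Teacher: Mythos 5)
Your overall plan follows the same stochastic-flow philosophy as the paper: encode the genealogy through the evolution of sub-populations, prove that these converge (after diffusive rescaling) to Feller diffusions, and transfer this to pointed Gromov--Hausdorff convergence of the forests. The technical core---martingale property of the flow evaluated on intervals, one-step variance estimates, CLT, and a limit theorem yielding a tuple of independent Feller diffusions---is exactly what the paper develops. Two points of divergence, one benign and one a genuine gap, are worth flagging.

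The benign difference is the identification of the limit. You propose to prove a full convergence of the rescaled flow $(N_{s,t})$ to the Bertoin--Le Gall flow of subordinators with $\psi(u)=\tfrac{\sigma^2}{2}u^2$, and then to invoke the correspondence between this flow and the Brownian forest. The paper instead shows, via an $\varepsilon$-meshing and a double-limit argument, that $\lambda\cdot\underline F^\uparrow$ converges to \emph{some} law that depends only on $\sigma$, and then identifies this law as $\mathcal F_\sigma$ by noting that the critical Bienaym\'e--Galton--Watson case (Theorem~\ref{thm:aldous}) is a special instance of the brick wall model. This is a shortcut: it avoids having to establish the cocycle structure of the limiting flow and its relation to Itô's excursion measure. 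Your route is in principle viable but is considerably heavier---you would have to show that the discrete flow (which has a size-biased root brick at every level and correlated boundary bricks between adjacent blocks) converges in the right sense to the idealized subordinator flow, and control the accumulated cocycle errors over $\asymp\lambda^{-2}$ generations, none of which the paper needs.

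The genuine gap is in the passage from flow convergence to local Gromov--Hausdorff convergence, which you describe as a ``secondary, more routine difficulty'' handled by a ``uniform-in-$\lambda$ second-moment estimate\ldots preventing any macroscopic mass from escaping to infinity.'' A second-moment bound controls the horizontal extent of the visited bricks, but it does \emph{not} rule out the presence of subtrees of macroscopic height that carry negligible mass and are therefore invisible to the flow evaluated on any finite grid of points. In the paper this is precisely what must be excluded, and it requires two dedicated estimates beyond the second moment: Corollary~\ref{lem:bound proba long thin pop}, which shows that a population starting below $\varepsilon n$ and remaining below $\varepsilon n$ on a $\lfloor\varepsilon n\rfloor$-grid is exponentially unlikely to survive for $\delta n$ generations, and Corollary~\ref{cor:surviveproba}, which gives the $\Theta(1/n)$ survival probability needed to count the candidate ``bad'' vertices. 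Only by combining these with the $\varepsilon$-meshing approximation (Proposition~\ref{prop: cv-of-epsilon-meshing-unif-in-n}) does one obtain, uniformly in $n$, that the coarse skeleton spanned by $\varepsilon n$-nodes is Gromov--Hausdorff close to the full forest. Without an analogue of the long-thin-branch estimate, your tightness step would fail: the Gromov--Hausdorff ball of radius $R$ around the root could contain thin branches of height $\gtrsim\delta R$ that escape the flow observations.
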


To obtain this, we prove a general result on discrete martingales that applies to our case. This kind of statement (that might be of independent interest) might be ``classical'', but since we have not been able to locate a precise reference for it, we provide a complete proof.

\begin{thm}[Convergence to a Feller diffusion and Kolmogorov's estimate]
\label{thm:general-martingale-diffusion-cv-intro-intro}
For $n \in \Z_{\geq 0}$, let $(A_k^{(n)})_{k \geq 0}$ be a sequence of discrete-time homogeneous, $\Z_{\geq 0}$-valued Markov chains, started from $A_0^{(n)}=n$. Assume that, for each $n \geq 1$, the process  $(A^{(n)}_k)_{k \geq 0}$ is also a martingale, and that there exist constants $\sigma,s>0$ such that the following conditions are satisfied:
\begin{align}
\label{eq:cond-for-feller-cv0}
\Expect{(\Delta A_{1}^{(n)})^2 }&\geq s \ \ \ \forall \, n\geq1\\
\label{eq:cond-for-feller-cv1}
\Expect{(\Delta A_{1}^{(n)})^2 }&\underset{n \to \infty}{\sim} \sigma^2 n\\
\frac{\Delta A_{1}^{(n)}}{\sqrt{n}} &\xrightarrow[n \to \infty]{(d)} \mathcal{N}(0,\sigma^2).
\label{eq:cond-for-feller-cv2}
\end{align}
Let $\tn{n}{0}$ be the extinction time of $(A_k^{(n)})_{k \geq 0}$, i.e. $\tn{n}{0}=\min\{k \geq 1 |A_k^{(n)}=0\}$. Then we have the following joint convergence in distribution:
\begin{equation}
\Bigg(\Bigg(\frac{A_{\lfloor nt \rfloor}^{(n)}}{n} \Bigg)_{t\geq 0},\frac{\tn{n}{0}}{n}\Bigg) \xrightarrow[n \to \infty]{(d)} \Big(\big(X_t\big)_{t\geq 0},\tc{0} \Big),
\label{eq:joint-cv-pop-slice-ext-time}
\end{equation}
where $(X_t)_{t\geq 0}$ is a Feller diffusion, i.e.\ the unique weak solution to 
\begin{equation}
\dinf X_t=\sigma\sqrt{X_t}\dinf W_t
\label{eq:def-Feller}
\end{equation}
with initial condition $X_0=1$, for $(W_t)_{t \geq 0}$ a standard Brownian motion, and where $\tc{0} = \inf \{ t \geq 0 : X_{t} =0\}$ is the extinction time of $X$.

 Furthermore, we have the following ``Kolmogorov estimate''
  \begin{eqnarray} \label{eq:survival-one-tree} \mathbb{P}(\tau^{{(1)}}_{0} \geq n) \sim \frac{2}{\sigma^{2} n}, \quad \mbox{ as } n \to \infty.  \end{eqnarray}
\end{thm}

This result generalizes the case where $(A_{k}^{(n)})_{k \geq 0}$ is a critical branching process with finite variance started from $n$ individuals, for which those results are well-known and in particular \eqref{eq:survival-one-tree} is sometimes called ``Kolmogorov's estimate'', see \cite{Kolmogorov1928}.\par

In our model of locally correlated trees, the discrete martingales $(A_k^{(n)})_{k \geq 0}$ that satisfy the hypotheses of this theorem are the number of individuals descended from the interval $[0,n[$ at time $0$, that are alive at time $k$.

\paragraph{Proof ideas and extensions.} The lack of independence properties in our model compared to the standard Bienaymé--Galton--Watson makes it more cumbersome to use standard tools to prove convergence of trees, such as the contour function or the Łukasiewicz exploration. We cope with this problem by describing our forest of random trees by their discrete  \textbf{stochastic genealogical flows}. In a nutshell, this idea brought to light in the continuous setting by Bertoin \& Le Gall~\cite{bertoinlegall1,bertoinlegall2,bertoinlegall3}  boils down to controlling the evolution of a few subpopulations in our brick wall model (or equivalently by the subforest spanned by a few vertices in $T_i^\downarrow$). Indeed, despite the lack of independence in our model, the tools available in stochastic analysis are robust enough to prove that the offsprings of different subpopulations evolve, in the scaling limit, as \textbf{independent Feller diffusions} (see Proposition \ref{coro:cv-of-tuple-of-pop-slices}). Combined by a tightness estimate (Proposition \ref{eq:cv-of-epsilon-meshing-unif-in-n}) this enables us to control the large scale geometry of our forest of trees using just a few subpopulations' evolution. We hope that the method developed in these pages could lead to other developments, in particular to the case of branching processes in varying environment, see \cite{conchonkerjan2023scaling} for a very recent result in that direction.

\paragraph{Plan of the paper.} In~\Cref{sec: def-prelim-results}, we define in more detail our brick wall model and explain how Bienaymé--Galton--Watson trees are recovered as a special case. We recall the well-known convergence of Bienaymé--Galton--Watson trees with finite variance to the Brownian forest (see~\Cref{thm:aldous}). We then prove that, for critical brick wall models with finite second moments, subpopulations are martingales. In the rest of the paper, we restrict ourselves to critical models with finite third moments. To prove that the trees of these models also converge to a Brownian forest, we start by focusing on the evolution of subpopulations in~\Cref{sec:fidi} where we prove Theorem~\ref{thm:general-martingale-diffusion-cv-intro-intro} before applying it to our model. We first tackle the behavior of large populations after one time step, and we prove in particular variance estimates and a CLT. Then, we establish that the scaling limit of a finite tuple of large subpopulations is a tuple of independent Feller diffusions, and that this convergence holds jointly with that of the corresponding extinction times. This enables us to prove important uniform controls, which are the key to the tightness estimates. \Cref{sec:forest} is devoted to the proof of~\Cref{thm:main}. Finally, in~\Cref{sec:extensions}, we discuss two directions in which to extend our model: one where we consider heavy-tailed brick distributions, and one where the events of birth and death occur at random times given by exponential clocks, rather than at integer times.\\

\noindent \textbf{Acknowledgements.} We are indebted to Camille Coron for many enlightening discussions about this work in its early days. N.C. is supported by SuperGrandMa, the ERC Consolidator Grant 101087572. A.C. is fully supported by the Austrian Science Fund (FWF) 10.55776/F1002.

\section{Definitions and preliminary results}
\label{sec: def-prelim-results}

In this section, we introduce our brick wall model in details, and show how it generalizes Bienaymé--Galton--Watson processes. We then encode it using its primal and dual forests and show that for all critical brick wall models with finite second moments, we have a martingale property as well as variance estimates and a CLT for subpopulations after one time step.

 \begin{figure}[h!]
\centering
\includegraphics[width=5cm]{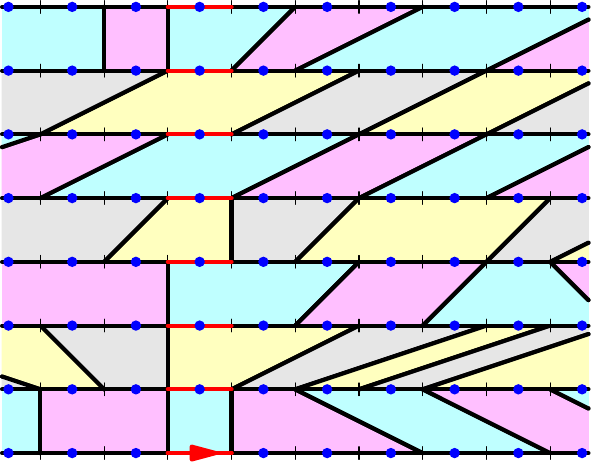} \hfill
\includegraphics[width=5cm]{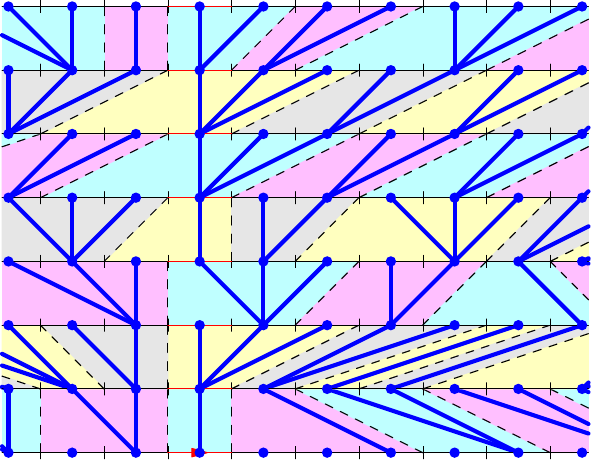} \hfill
\includegraphics[width=5cm]{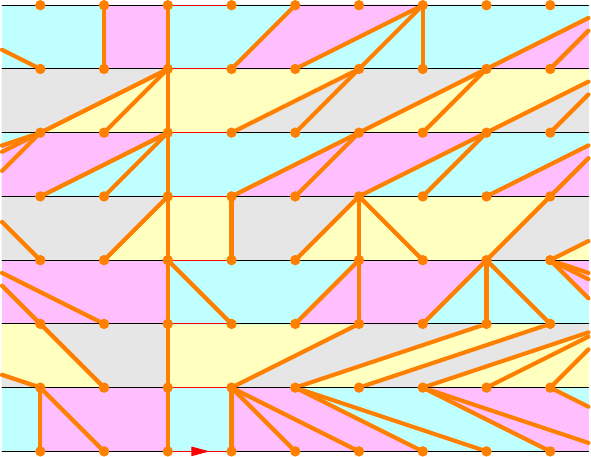}
\caption{Left, a brick wall, with the brick borders in black, and the bricks displayed in various colors. The origin edge $[0,1]\times\{k\}$ of each level $k$ is displayed in red, the brick on its top is size-biased by the variable $B$ whereas all other bricks are sampled iid according to the law $\rho$. The integers points $(i,j)$ are displayed with black crosses and the half-integers points $( i+ \frac{1}{2}, j)$ with blue dots -- except the root node $(\frac{1}{2},0)$, corresponding to the red arrow of the root edge. Center, the associated primal forest $F^{\uparrow}$, in blue. Right, the associated dual forest $F^{\downarrow}$, in orange.} 
\label{fig:bricksall}
\end{figure}

\subsection{Definition of the model}
\label{subsec:defs}

Let $\rho$ be a probability distribution on $\Z_{>0}^2$ such that
\[
Z:=\sum_{i=1}^{+\infty}\sum_{j=1}^{+\infty}i\rho(i,j)<\infty.
\]
We define its \textbf{bottom marginal law} $\beta$ and its \textbf{top marginal law} $\eta$  by 
\[
\beta(i) = \sum_{j \geq 1} \rho(i,j), \ \ \eta(j) =  \sum_{i\geq1} \rho(i,j).
\]
We also define the respective \textbf{bottom-biased} laws for $i,j \geq 1$ by 
\begin{align*}
\rho^*(i,j)=\frac{1}{Z}i\rho(i,j), \qquad 
\beta^*(i)=\frac{1}{Z}i\beta(i),  \quad \mbox{ and }\quad 
\eta^*(j)=\frac{1}{Z}\sum_{i\geq 1}i\rho(i,j).
\end{align*}

\begin{Def}[Brick wall] \label{def:brick}
The \textbf{brick wall with distribution \boldmath$\rho$} is defined as follows. We construct the first row of bricks at height $k=0$, by first drawing a bi-infinite sequence of independent brick dimensions $(B_{i,0},H_{i,0})\sim\rho$ for $i \in \Z \setminus\{0\}$, and $(B_{0,0},H_{0,0})\sim\rho^*$. We think of these bricks as having rigid bottom and top lengths, but flexible vertical sides, so that we need some additional information to place them on the horizontal line. For that purpose, we fix the position of the root brick by requiring that the bottom-left corner of the root brick is $(-U_0,0)$, where $U_0$ is, conditionally on $(B_{i,0},H_{i,0})_{i \in\Z}$, uniform over $\{0,1, \dots, B_{0,0}-1\}$, and that its top-left corner of the root brick is at $(0,1)$. Once the root brick is fixed, we can place all the other bricks inductively, to its left and to its right.

We proceed similarly for the other rows of bricks that we stack on this first one: at each level $j\geq1$, we pick, independently from the lower levels, a new bi-infinite sequence of brick dimensions,  $(B_{i,j},H_{i,j})\sim\rho$ for $i \in \Z \setminus\{0\}$, and $(B_{0,j},H_{0,j})\sim\rho^*$. We then fix the position of the root brick at height $j$ again by requiring that its bottom-left corner is at $(-U_j,j)$, where $U_j$ is uniform over $\{0,1, \dots, B_{0,j}-1\}$, and that its top-left corner is at $(0,j+1)$.
\end{Def}

The horizontal sides of the bricks can be seen as edges so that the brick wall is  a graph with vertex set $\mathbb{Z} \times  \mathbb{Z}_{\geq 0}$ containing in particular all horizontal edges $(i,j) \leftrightarrow (i+1,j)$, see Figure \ref{fig:bricksall}. This graph is rooted at the oriented edge $(0,0) \to (1,0)$. Its information is entirely encoded in the sequence   \begin{eqnarray} \label{eq:defbrick} \big((B_{i,j}, H_{i,j} : i \in \mathbb{Z}), U_j : j \geq 1\big) \quad  \mbox{satisfying} \quad  B_{i,j}, H_{i,j} \geq 1 \quad \mbox{ and } U_j \in \{0,1, ... , B_{0,j}-1\}.  \end{eqnarray}
The reader may have already guessed that we use a size-biased law in our construction to get the following invariance property:

\begin{prop}[Invariance by translation] The law of the brick wall is \textbf{invariant under the shift of the root edge} $(0,0) \to (1,0)$ one unit to the right (or to the left). \label{prop:invariance}
\end{prop}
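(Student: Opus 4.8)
The plan is to read the claimed invariance as a (discrete, two-sided) Palm-calculus statement and to reduce it, using the independence of the successive rows of bricks, to a single one-row identity, which is then checked by a direct computation of finite-dimensional marginals. Recall that the brick wall is encoded row by row by data $\big((B_{i,j},H_{i,j})_{i\in\Z},U_j\big)_{j\geq 0}$, the rows being independent, and that within a row the bricks $(B_{i,j},H_{i,j})$ for $i\neq 0$ are i.i.d.\ of law $\rho$, the root brick $(B_{0,j},H_{0,j})$ has law $\rho^*$, and $U_j$ is uniform on $\{0,\dots,B_{0,j}-1\}$ given the bricks. From row $j$ one reads off the ``marked tiling'' of $\Z$ obtained by assigning to each integer edge at height $j$ the row-$j$ brick covering it together with its position inside that brick, the edge $0$ (the reference edge at height $j$) being the mark and sitting at position $U_j$ inside the root brick. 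Shifting the root oriented edge $(0,0)\to(1,0)$ one unit to the right amounts to moving the mark of row $0$ one step to the right; and since the reference edge at height $j+1$ is, by construction, the leftmost top edge of the root brick of row $j$, this move propagates upward: either the root brick of row $0$ is unchanged, in which case nothing happens above height $1$, or it changes, and then the reference edge of each higher row $j\geq1$ gets translated by a number $k_j$ of edges which is a measurable function of the rows strictly below $j$ only (for instance $k_1$ is the top-length of the old root brick of row $0$). Hence the proposition will follow once we know that, within a single row, moving the mark by any fixed number of steps preserves the law --- applied at row $0$ with one step, and at each row $j\geq1$ conditionally on the independent lower rows, which fix $k_j$.

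The one-row lemma reads: if $(\mathfrak b_i)_{i\in\Z}$ are independent with $\mathfrak b_i=(B_i,H_i)\sim\rho$ for $i\neq 0$ and $\mathfrak b_0\sim\rho^*$, and $U$ is uniform on $\{0,\dots,B_0-1\}$ given the $\mathfrak b_i$'s, then the map $\theta$ sending $\big((\mathfrak b_i)_i,U\big)$ to $\big((\mathfrak b_i)_i,U+1\big)$ if $U<B_0-1$ and to $\big((\mathfrak b_{i+1})_i,0\big)$ if $U=B_0-1$ preserves the law; being a bijection, $\theta^{k}$ then preserves it for every $k\in\Z$, and so does $\theta^{k}$ after conditioning on an independent random $k$. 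To see that $\theta$ preserves the law it suffices to check $\Pb(\theta^{-1}C)=\Pb(C)$ for every cylinder $C$ prescribing the root brick, the mark, and finitely many bricks on each side. The mechanism is the identity $\rho^*(b,h)\cdot\frac1b=\frac1Z\rho(b,h)$, which makes the pair (root brick, mark) flat, of law $\frac1Z\rho(b,h)$ on $\{(b,h,u):0\leq u\leq b-1\}$, so that $\Pb(C)=\frac1Z\prod\rho(\mathfrak b_i)$ over the prescribed bricks. If the mark prescribed by $C$ is positive then $\theta^{-1}C$ is the cylinder with the same bricks and mark decreased by one, and the equality is immediate; if it is $0$ then $\theta^{-1}C$ forces the old root brick to be $C$'s brick of index $-1$ and its old mark to be extremal, and the factor $\frac1b$ absorbs the size-bias, yielding again $\Pb(\theta^{-1}C)=\frac1Z\prod\rho(\mathfrak b_i)$. (Equivalently, the marked tiling is the stationary regime of the left-to-right Markov chain on (current brick, position) whose invariant measure is $\pi(\mathfrak b,u)=\rho(\mathfrak b)/Z$, matching $\rho^*\otimes\mathrm{Unif}$.)

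Putting things together, the shift acts on row $0$ as $\theta$ and, row by row, as $\theta^{k_j}$ with $k_j$ a function of the strictly lower rows; by the lemma and the independence across rows, each transformed row has its original law and stays independent of the lower rows, so the whole family keeps its law, which is the assertion (the left shift is treated symmetrically via $\theta^{-1}$). The one genuinely delicate point is the bookkeeping of this upward cascade --- checking that the new root brick, the new mark, and the shift induced on the next row are exactly those prescribed by the construction --- but no probabilistic input beyond the one-row lemma and the independence of the rows is needed.
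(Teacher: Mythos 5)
Your proof is correct and follows essentially the same approach as the paper: reduce to the stationarity of a single row (the marked, size-biased renewal structure on $\Z$) and then propagate upward using independence across rows, with the translation applied to each higher row being a measurable function of the rows below it. The only difference is one of packaging: where the paper cites the classical Palm/renewal stationarity result from Kallenberg, you verify it directly from the identity $\rho^*(b,h)\cdot\tfrac1b=\tfrac1Z\rho(b,h)$ and a cylinder computation, and you make the upward cascade explicit rather than summarizing it as an "iterative application".
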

\begin{proof}
The bottom right corners of the lowest bricks can be seen as a point process on $ \mathbb{Z}$ and since we use the size-biaised distribution for the root brick, a classical result in renewal theory (see e.g. \cite[Proposition 9.18]{kallenberg}) shows that its distribution is the so-called "stationary" renewal process and its law is invariant under shift of the root edge one unit to the right. This shift may cause a relabeling of the vertices at height $1$, but an iterative application of the above shift invariance shows that the law of the entire brick wall is shift invariant. \end{proof}

\begin{Def}[Primal and dual forests] \label{def:forests}Given a brick wall as in \eqref{eq:defbrick} we define two graphs $F^\uparrow$ and $F^\downarrow$ called respectively the \textbf{primal and dual forests}. The primal forest $ F^\uparrow$ has vertex set $ (\mathbb{Z}+ \frac{1}{2}) \times \mathbb{Z}_{\geq 0}$ and its edge set is prescribed as follows:
\begin{itemize}
\item we put a node on the center of each horizontal edge $(i,j) \leftrightarrow (i+1,j)$ of the brick wall (so that these nodes have half-integer abscissae and integer ordinates);
\item in each brick, we put an edge between the right-most bottom node and all the top nodes, see Figure \ref{fig:bricksall}.
\end{itemize}

The dual forest  $F^{\downarrow}$ is defined similarly: its vertex set is $ \mathbb{Z} \times \mathbb{Z}_{\geq 0}$ and its edge set is given by linking, inside each brick, the left-most top vertex with integer coordinates to all bottom vertices with integer coordinates except the right-most one, see Figure \ref{fig:bricksall}.
\end{Def}

Clearly $ F^\uparrow$ is a spanning forest of $ (\mathbb{Z}+ \frac{1}{2}) \times \mathbb{Z}_{\geq 0}$ and for each $i \in \Z$, we denote by $T^{\uparrow}_i$ the tree of $F^{\uparrow}$ that starts at $i+1/2$. By Proposition \ref{prop:invariance}, we have $(T_{i+1}^\uparrow : i \in \mathbb{Z}) \stackrel{(d)}{=} (T_i^\uparrow : i \in \mathbb{Z})$, but once again, the trees $T_i^\uparrow$ are not independent in general. While the trees of $F^{\uparrow}$ and ${F}^{\downarrow}$ are naturally embedded in $\R^2$, we will consider them as intrinsic metric spaces, by endowing them with the graph distance where each edge is of unit length. In order to help the reader distinguish between the primal and the dual forests, the points  $ \in (\mathbb{Z}+ \frac{1}{2}) \times \mathbb{Z}_{\geq 0}$ will be called \textbf{nodes}, whereas those in $\mathbb{Z} \times \mathbb{Z}_{\geq 0}$ will simply be called vertices.

\begin{rem} A particular case of those constructions already appeared in the theory of discrete random planar maps, see Krikun \cite{krikun2005,krikun-uipq} or \cite{curien-legall-modif,curien-menard,carrance-trig-eul,lehericy}.
\end{rem}

\begin{framednote}{Criticality and moment conditions}
In this paper, we only consider \textbf{critical} models, for which the marginal laws $\beta$ and $\eta$ have equal (and finite) expectation:
\begin{equation}
\Expect{B} = \sum_{i \geq 1} i \beta(i)=\Expect{H} = \sum_{i \geq 1}\eta(i)=Z<\infty.
\label{eq:criticality-hyp}
\end{equation}
We also assume that the marginal laws $\beta$ and $\eta$ both have finite third moments:
\begin{equation}
\Expect{H^3}<\infty \text{ and } \Expect{B^3}<\infty,
\label{eq:hyp-third-moment}
\end{equation}
and we exclude the degenerate case where the top and bottom of each brick are equal a.s.:
\begin{equation}
\exists i,j\geq 1\ \ i\neq j \text{ and }\rho(i,j)>0.
\label{eq:hyp-non-degenerate}
\end{equation}
\end{framednote}

\subsection{Special case: Bienaymé--Galton--Watson trees}
\label{subsec:local-cata-bgw}

Consider the special case of a brick distribution where the top and bottom marginals are independent: $\rho = \beta \otimes \eta$. Let $\mu,\nu$ be two probability distributions on $\Z_{>0}$ and $\Z_{\geq 0}$ respectively, such that
\begin{equation*}
\begin{cases}
\eta=\mu \\
\beta \text{ is the law of } B=1+\inf\{i \geq 0 \, \lvert \, \sum_{0 \leq j \leq i} (N_j-1)<0\}, \, N_i \text{ i.i.d.}\sim \nu
\end{cases}
\end{equation*}
Then the evolution of the forest $F^{\uparrow}$ can be described informally as a discrete genealogical process with ``local catastrophes'', where, at each integer time $n \geq 1$:
\begin{itemize}
\item each individual alive at time $n-1$ is independently scheduled to give birth to a number of children according to $\mu$
\item then, each individual alive at time $n-1$ cancels all the impending offspring of a number of individuals of law $\nu$, starting (if the number is positive) with itself, and then going to the individuals on its right (that are not necessarily its siblings).
\end{itemize}
Informally, $B$ is the time between two consecutive moments when the debt of deaths is settled. In the context of continuous branching processes in varying environment, processes with Poissonian ``catastrophes'' have been considered~\cite{bansaye-simatos}, and the term local catastrophes, that we use here and in the title of this paper, is in analogy to those models. Note however that our discretes processes are not in general branching processes, due to the interdependence between the trees. 

\textbf{Bienaymé--Galton--Watson} (hereafter BGW) processes can be seen as special cases of such genealogical process with local catastrophes, where the catastrophes are just the deaths of single individuals. Indeed, for any probability distribution $p$ on $\Z_{\geq 0}$, the genealogical process obtained as above from
\begin{equation*}
\begin{cases}
\mu=(p \, \lvert \, p>0)\\
\nu=\text{Ber}(p_0),
\end{cases}
\end{equation*}
is a BGW process of offspring distribution $p$, started from $\Z$. This can also be directly expressed as a  brick wall model with independent top and bottom distributions, given by
\begin{equation*}
\begin{cases} 
\beta=\text{Geom}(1-p_0)\\
\eta = (p \, \lvert \, p>0).
\end{cases}
\end{equation*}

In this case, the random trees $T_i^\uparrow$ are simply i.i.d.~BGW trees with offspring distribution $p$: interaction between the trees disappear. In the case of a \textit{critical} BGW, if $X\sim p$ has the law of the offspring distribution we have $ \mathbb{E}[X]=1$ and 
$$ \mathbb{E}[H] = \mathbb{E}[B] = \frac{1}{1-p_0}, \quad \mathbb{E}[B^2] = \frac{1+p_0}{(1-p_0)^2}, \quad \mathbb{E}[H^2] = \frac{\mathbb{E}[X^2]}{1-p_0},$$ so that  
$$\sigma^2=\frac{ \mathbb{E}[(H-B)^2]}{ \mathbb{E}[B]} = \mathbb{E}[X^2]-1 = \Var{X}.$$
Let us take the opportunity given by this model to describe the Brownian forest in detail. Recall from \cite{duquesne-legall-mono} the classical coding of a real tree by a positive excursion: if $ \mathbf{e} : [0, \ell] \to \mathbb{R}_+$ is a non-negative continuous function satisfying $ \mathbf{e}(0) = \mathbf{e}(\ell)=0$; it can be seen as the contour function of a pointed real tree $ (\mathcal{T}_{\mathbf{e}}, \varrho)$. We can then consider the pushforward $  \mathbf{n}$ of Itô's excursion measure for linear Brownian motion \cite[Chapter XII]{revuz-yor} on pointed real trees by this coding. In particular $\mathbf{n}$ is a sigma-finite measure on the set $ \mathbb{T}$ of compact pointed real trees (endowed with the pointed Gromov--Hausdorff distance) and we fix the normalization of $ \mathbf{n}$ by requiring that 
\begin{displaymath}
  \mathbf{n}(\{ (\tau, \varrho) : \mathrm{Height}(\tau) \geq x \} ) = \frac{2}{x}, \quad \forall x >0.
\end{displaymath}

For any $ \sigma >0$, if $ \Pi $ is a Poisson point process on $ \mathbb{R}\times \mathbb{T}$ of intensity $ \sigma^{-2} \mathrm{d}t \otimes \mathbf{n}$, then we consider the pointed tree $ \mathcal{F}_\sigma$ obtained for each atom $(t, (\tau, \varrho))$ of $\Pi$ by grafting $(\tau, \varrho)$ through $\varrho$ at the point $t \in \mathbb{R}$. This operation is well-defined in the local pointed Gromov--Hausdorff topology since there is a finite number of trees of height larger than $ \varepsilon$ grafted on each compact set of $ \mathbb{R}$ -- see \cite[Section 6.1.2]{ckm} for a variant of this construction. The random tree $ \mathcal{F}_\sigma$ is pointed at $0 \in   \mathbb{R}$ and has two ends. If $ \underline{F}^\uparrow$ is the infinite random tree obtained by grafting i.i.d.~BGW$(p)$ trees on the line $ \mathbb{Z}$ and pointed at $0$, then we have 
\begin{thm}[Aldous \cite{aldous-crt1}, Le Gall \cite{lg-random-trees}] If $p$ is critical and has finite non zero variance $\sigma^2$, then we have the following convergence for the local pointed Gromov--Hausdorff metric 
$$ \lambda \cdot  \underline{F}^\uparrow \xrightarrow[\lambda\to0]{} \mathcal{F}_\sigma.$$
\label{thm:aldous}
\end{thm}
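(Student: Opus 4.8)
The statement to prove (Theorem~\ref{thm:aldous}) is the classical convergence of i.i.d.\ critical finite-variance BGW forests grafted on $\mathbb{Z}$ towards the Brownian forest $\mathcal{F}_\sigma$. Since this is a known result, the plan is to reduce it to the single-tree Aldous CRT convergence plus a Poissonian bookkeeping argument. First I would recall the finite-forest version: if $\mathbf{t}_1,\dots,\mathbf{t}_k$ are i.i.d.\ BGW$(p)$ trees, then Aldous' theorem (via the Łukasiewicz walk / Donsker plus the convergence of the height process of Le Gall--Le Jan / Duquesne) gives that the concatenation of their contour functions, rescaled in space by $\lambda$ and in time by $\lambda^2$, converges to a concatenation of independent Brownian excursions under the normalized Itô measure; equivalently, $\lambda\cdot(\mathbf{t}_1\sqcup\cdots\sqcup\mathbf{t}_k)$, viewed as a pointed forest along $\{1,\dots,k\}$, converges to the finite Brownian forest on $[0,k]$. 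The constant $\sigma^2 = \Var(X)$ enters through the diffusive scaling of the Łukasiewicz walk, whose increments have variance $\Var(X)$, and the factor $2$ in the normalization of $\mathbf{n}$ matches the factor $2$ coming from the height process of Brownian motion (the height process being distributed as a reflected BM with the classical normalization).

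Next I would upgrade from a fixed finite number of trees to the full bi-infinite forest using the local pointed Gromov--Hausdorff topology. The point is that $\mathcal{F}_\sigma$ restricted to a compact neighbourhood of its root $0$ only sees finitely many grafted excursions of height $\geq\varepsilon$ almost surely, because the Poisson intensity $\sigma^{-2}\,\mathrm{d}t\otimes\mathbf{n}$ assigns finite mass to $\{(t,(\tau,\varrho)) : t\in[-A,A],\ \mathrm{Height}(\tau)\geq\varepsilon\} = [-A,A]\times\{\mathrm{Height}\geq\varepsilon\}$, namely $2A\varepsilon^{-1}\sigma^{-2}<\infty$. On the discrete side, one shows the analogous statement: after rescaling by $\lambda$, the number of trees $T_i^\uparrow$ with $\lambda\cdot\mathrm{Height}(T_i^\uparrow)\geq\varepsilon$ and $\lambda\cdot|i|\leq A$ is tight, and moreover the small trees contribute negligibly to any ball of fixed radius around the root. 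This follows from the i.i.d.\ structure together with the tail bound $\Prob{\mathrm{Height}(\mathbf{t})\geq h}\sim 2/(\sigma^2 h)$ for a single critical finite-variance BGW tree, so that the height-$\geq\varepsilon/\lambda$ trees among $\{|i|\leq A/\lambda\}$ form (after rescaling the index) a binomial point process converging to the appropriate Poisson process, and the joint law of these ``macroscopic'' trees converges to i.i.d.\ samples of $\mathbf{n}(\,\cdot\mid\mathrm{Height}\geq\varepsilon)$ by the single-tree convergence conditioned on being tall.

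Finally I would assemble these pieces: fix a radius $R$ and an error $\delta$; choose $\varepsilon$ small enough that, both in $\mathcal{F}_\sigma$ and (with high probability, uniformly in small $\lambda$) in $\lambda\cdot\underline{F}^\uparrow$, the union of grafted trees of rescaled height $<\varepsilon$ attached within distance $R$ of the root has GH-impact below $\delta$; then apply the finite-forest convergence to the finitely many tall trees together with their grafting locations on the line, using that the spine $\lambda\cdot\mathbb{Z}$ converges to $\mathbb{R}$. A diagonal/subsequence argument over $R\to\infty$, $\delta\to0$ then yields the claimed local pointed Gromov--Hausdorff convergence. The main obstacle is the uniform control of the small trees: one must rule out that a large number of individually small subtrees accumulate along the spine and distort distances in a fixed ball. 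This is handled by a first-moment bound — the expected number of trees of height in $[\varepsilon/(2\lambda),\varepsilon/\lambda)$ attached in a window of $\lfloor A/\lambda\rfloor$ spine vertices is $O(A/\varepsilon)$, and more importantly the expected total "excess length" they contribute is controlled by $\Expect{\mathrm{Height}(\mathbf{t})\wedge(\varepsilon/\lambda)}$ summed over the window, which is $O(A\lambda^{-1}\cdot\lambda\log(1/\cdot))$ — wait, more carefully, $\Expect{\mathrm{Height}(\mathbf{t})\indic_{\mathrm{Height}(\mathbf{t})<\varepsilon/\lambda}} = O(\log(\varepsilon/\lambda))$, so after rescaling this is $o(1)$ per unit spine length only if one instead bounds the probability that \emph{any} ball of radius $R$ meets a tree — which reduces again to the tail estimate. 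Since all of this is standard for i.i.d.\ BGW forests, the proof is essentially a citation to \cite{aldous-crt1,lg-random-trees} combined with the Poissonization argument sketched in \cite[Section 6.1.2]{ckm}, and I would present it at that level of detail.
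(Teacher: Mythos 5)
The paper does not prove Theorem~\ref{thm:aldous}; it is stated as a citation to Aldous and Le Gall (and the paper invokes it only at the very end, to \emph{identify} the limit $\mathcal{X}$ with $\mathcal{F}_\sigma$ in the particular case of a BGW brick distribution). So there is no in-paper proof to compare your sketch against, and you yourself correctly conclude at the end that this is ``essentially a citation.'' That is the right call.

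On the mathematical substance of your sketch: the finite-forest step (concatenated Łukasiewicz/contour functions $\to$ Brownian excursions via Donsker and Duquesne--Le Gall, with the constant $\sigma^2=\Var(X)$ coming from the step variance and the factor $2$ from the Brownian height process) and the Poissonization step (Kolmogorov's tail estimate $\Pb(\mathrm{Height}(\mathbf{t})\geq h)\sim 2/(\sigma^2 h)$, which indeed matches the intensity $\sigma^{-2}\,\dinf t\otimes\mathbf{n}$ after rescaling) are both correct. The one place where your argument goes wobbly is the control of small trees: the first-moment ``excess length'' computation you start, then abandon mid-sentence, is not needed in the local pointed Gromov--Hausdorff topology. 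If every tree $T^\uparrow_i$ attached within rescaled spine-distance $R$ of the root has rescaled height $<\varepsilon$, then collapsing each such tree to its attachment point on the spine is a map that moves every point by at most $\varepsilon$ and does not change the metric on the remaining points, so $\dgh\bigl(\mathrm{Ball}_R(\lambda\cdot\underline{F}^\uparrow),\,\mathrm{Ball}_R(\lambda\cdot\underline{F}^{\uparrow,\mathrm{tall}})\bigr)\leq\varepsilon$ deterministically on that event, regardless of how many small trees there are. There is no accumulation issue to rule out: the ``bad event'' is simply the existence of one tall tree attached beyond $R$ whose branches re-enter the ball, and that cannot happen because distance in a tree grafted on a line is spine-distance plus height. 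So the $\log$ estimate, the $\Expect{\mathrm{Height}\wedge(\varepsilon/\lambda)}$ bound, and the hedging that follows are red herrings; replace that paragraph with the collapse argument and the sketch closes cleanly.
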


\subsection{Duality between the primal and dual forests}
\label{subsec:dual}

The terms ``primal'' and ``dual'' forests, as well as their respective constructions, hint at a duality which roughly says that $F^{\downarrow}$ has the law of $F^\uparrow$ for the reverse brick distribution $^t \rho$ of law $(H,B)$ where $(B,H) \sim \rho$. In this section, we formalize this into a precise statement, that will be of use in~\Cref{sec:cv-feller-dual}. To state precise the duality property, we need rather heavy notation, so this section can be skipped at first reading.

\begin{figure}[htp]
\centering
\includegraphics[width=\textwidth]{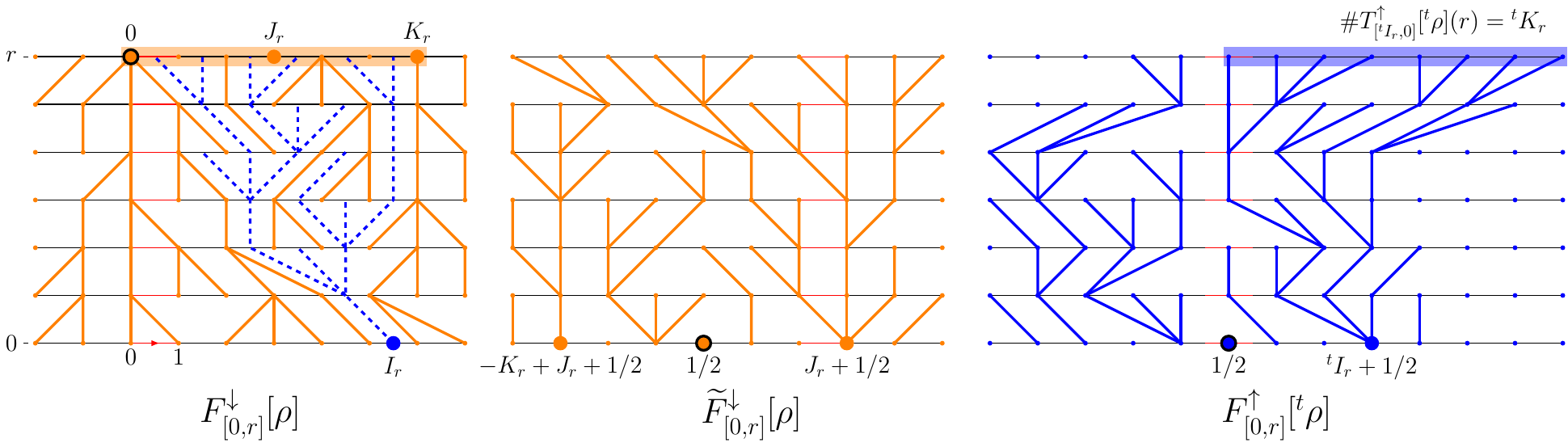}
\caption{The different forests at play in~\Cref{prop: forest duality}: left, the strip ${F}_{[0,r]}^{\downarrow}[\rho]$ of the dual forest associated to $\rho$. Center, $\widetilde{{F}}_{[0,r]}^{\downarrow}[\rho]$, which is the same forest re-rooted at the random index $J_r$, then rotated and shifted. Right, the strip ${F}^{\uparrow}_{[0,r]}[^t\rho]$ of the primal forest associated to $^t\rho$, and, highlighted in light blue, the set of vertices $T^{\uparrow}_{(I_r,0)}[^t\rho](r)$. For the left and right forest, the root edges of the underlying brick wall are depicted in red at each level, and the root vertex is circled in black. Note that the center and rightmost (sub-)forests in these examples are identical as metric spaces, but they have slightly different embeddings in $\R^2$ due to the rooting conventions of our brick walls.}
\label{fig:forest-duality}
\end{figure}

In particular, we make the dependence on the underlying distribution $\rho$ explicit in the notation. Fix $r \geq 1$. For a brick distribution $\rho$, let us denote by ${F}^{\uparrow}_{[0,r]}[\rho]$ (resp. ${F}_{[0,r]}^{\downarrow}[\rho]$) the ``strip'' of the associated primal (resp. dual) forest that lies in $\Z \times [0,r]$. For $i \in \Z$, we write  $T_{(i,0)}^{\uparrow}[\rho]$ for the tree of the primal forest ascending from the node $(i+1/2,0)$, and $T^{\downarrow}_{(i,r)}[\rho]$ the tree of the dual forest descended from the vertex $(i,r)$ (that has at most height $r$ by our definition). For a tree $T$ of height at least $k$, we write $T(k)$ for the set of its vertices at height $k$. Then, let $I_r$ denote the smallest integer $i>0$ such that $T_{(i,0)}^{\uparrow}[\rho]$ has height at least $r$. We define $K_r:=\# T^{\uparrow}_{(I_r,0)}[\rho](r)$. In other words, $K_r$ is the number of descendants at time $r$ of the first primal tree right of the root that survives from time $0$ to time $r$. We can see that $K_r$ is also equal to the first \emph{index} $i\geq 1$ such that the \emph{dual} tree $T^{\downarrow}_{(i,r)}[\rho]$ survives from ordinate $r$ to ordinate $0$. Indeed, a quick look at Figure \ref{fig:forest-duality} should convince the reader that all the dual trees $T^{\downarrow}_{(i,r)}[\rho]$ with $0<i<K_r$ die out before reaching ordinate 0. This observation will be of use in~\Cref{sec:cv-feller-dual}. We also denote by $^tI_r$ and $^tK_r$ the equivalent quantities defined for the reverse brick law $^t\rho$. We can now properly state a duality relation.

\begin{prop}
Let $\widetilde{{F}}_{[0,r]}^{\downarrow}[\rho]$ be the infinite forest obtained by re-rooting ${F}_{[0,r]}^{\downarrow}[\rho]$ at the node $(J_r,r)$, where the index $J_r$ is chosen uniformly at random in $\{1, \dots, K_r\}$, turning it upside-down, and shifting its abscissae by $1/2$ to the right (see~\Cref{fig:forest-duality}). Let $\mu^{\downarrow}_{r}[\rho]$ be the measure on forests $(\dots, T_{-1},T_0,T_1, \dots)$ of height $r$ having density $K_r$ with respect to the law of $\widetilde{{F}}^{\downarrow}_{[0,r]}[\rho]$. On the other hand, let $\mu^{\uparrow}_{r}[^t\rho]$ be the measure on the  the same class of forests, having density $^tK_r$ with respect to the law of ${F}^{\uparrow}_{[0,r]}[^t\rho]$. Then
\begin{equation}
\mu^{\downarrow}_{r}[\rho]=\mu^{\uparrow}_{r}[^t\rho].
\label{eq: forest duality}
\end{equation}
\label{prop: forest duality}
\end{prop}

\vspace{-2em}

\begin{proof}
Even though the trees we are studying are more complicated than the ones appearing in~\cite{curien-legall-modif} (and, in particular, are not independent), we can prove this result similarly to~\cite[Proposition 8]{curien-legall-modif}, with essentially combinatorial arguments.

Indeed, consider a configuration of bricks $\mathcal{C}$ going from height 0 to $r$, with a distinguished root brick at each height level, such that the root brick at height $\ell$ always shares an edge with the root brick at height $\ell+1$ (so that it can be considered as a subset of a brick wall). We denote by $(i_{k,\ell},j_{k,\ell})$ the bottom and top dimensions of the $k$-th brick at height $\ell$ (with the 0-th brick being the root brick, and the negative indices corresponding to the bricks to the left of the root brick). From $\mathcal{C}$, we construct a dual forest ${F}^{\mathcal{C}}=\dots,T^{\mathcal{C}}_{-1},T^{\mathcal{C}}_{0},T^{\mathcal{C}}_{1}, \dots$, where all trees have height less than or equal to $r$. We perform the re-rooting, rotation and shift as above to get a new rooted forest $\widetilde{{F}}^{\mathcal{C}}=\dots,\widetilde{T}^{\mathcal{C}}_{-1},\widetilde{T}^{\mathcal{C}}_{0},\widetilde{T}^{\mathcal{C}}_{1}, \dots$ Let $K^{\mathcal{C}}$ be the first index $i\geq 1$ such that $T^{\mathcal{C}}_{i}(r)\neq \varnothing$, and let $^tK^{\mathcal{C}}$ be the first index $i \geq 1$ such that $\widetilde{T}^{\mathcal{C}}_{i}(r)\neq \varnothing$.

Note that, going from $F^{\mathcal{C}}$ to $\widetilde{F}^{\mathcal{C}}$, we forget the position of the original root vertex at level $0$ in $\mathcal{C}$. However, the dual root tree in $F^{\mathcal{C}}$ becomes in $\widetilde{F}^{\mathcal{C}}$ the first tree $T$ of height $r$ right of the root of  $\widetilde{F}^C$. Thus, the original root vertex of $F^{\mathcal{C}}$ corresponds to one of the nodes at height $r$ in $T$. Since there are precisely $^tK^{\mathcal{C}}$ such nodes, we have
\begin{align*}
\mu_r^{\downarrow}[\rho]\left(\widetilde{{F}}^{\mathcal{C}}\right)&=K^{\mathcal{C}}\cdot{}^tK^{\mathcal{C}}\cdot\frac{1}{K^{\mathcal{C}}}\cdot\frac{1}{i_{0,0}}\frac{1}{Z}i_{0,0}\rho(i_{0,0},j_{0,0})\prod_{k\neq 0}\rho(i_{k,0},j_{k,0})\prod_{1 \leq \ell \leq r-1}\frac{1}{i_{0,\ell}}\frac{1}{Z}i_{0,\ell}\rho(i_{0,\ell},j_{0,\ell})\left(\prod_{k\neq 0}\rho(i_{k,\ell},j_{k,\ell})\right) \\
&=\,^tK^{\mathcal{C}}\cdot\prod_{1 \leq \ell \leq r-1}\frac{1}{Z}\left(\prod_{k}\rho(i_{k,\ell},j_{k,\ell})\right) \\
&=\mu^{\uparrow}_r[^t\rho]\left(\widetilde{{F}}^{\mathcal{C}}\right).
\end{align*} 
\end{proof}

\subsection{Population evolution and martingale property}
\label{subsec:martingales}

We now prove that in a critical brick wall model, the population descended from a set of ancestors in $F^\uparrow$ is a martingale. This is our first step in the understanding of the large scale properties of the trees $T_i^\uparrow$, since it implies in particular that all trees $T_i^\uparrow$ are finite in the critical case.\medskip

Let $a<b \in \Z$. We denote by $M_t([a,b[)$ the number of descendants at time $t$ of the nodes $x$ in $a \leq x < b$ at time $0$, for the primal forest. Necessarily, $M_0([a,b[) = b-a$. We will also sometimes consider populations starting from individuals at time $t_0>0$, which we denote accordingly $M_t([(a,t_0),(b,t_0)[)$, for $t\geq t_0$. Thus, $M_t([a,b[)=M_t([(a,0),(b,0)[)$.

\begin{prop}
\label{prop:slice-of-pop-martingale} If $(H,B) \sim \rho$, we suppose that $ \mathbb{E}[H] = \mathbb{E}[B]$ (criticality) and furthermore that $ \mathbb{E}[B^2], \mathbb{E}[H^2] < \infty$. Then for any $a<b \in \Z$, the sequence $\left(M_t([a,b[) \right)_{t \in \Z_{\geq 0}}$ is a (positive) martingale and a homogeneous Markov chain.
\end{prop}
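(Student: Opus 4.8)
The plan is to analyze one step of the population evolution and exploit the translation invariance and the size-biasing baked into the model. Fix $a<b$ and condition on the state of the brick wall up to height $t$; I want to show that $M_{t+1}([a,b[)$ given $M_t([a,b[)=n$ has the same law as the number of top nodes produced by laying bricks over a segment of $n$ consecutive bottom edges, and that this law depends only on $n$ (Markov, homogeneous) and has mean $n$ (martingale). The key combinatorial observation is that the descendants at time $t$ of the nodes in $[a,b[$ form, at each later height, a contiguous block of nodes: this is because the primal forest edges go from the right-most bottom node of a brick to all of its top nodes, so the image of an interval of nodes under one generation is again an interval (bricks are stacked side-by-side with no crossing). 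Hence $M_{t+1}$ is determined by how many bricks are needed to cover the $n$ bottom edges below the current block and how tall those bricks are.

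\medskip

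First I would set up the one-step transition precisely. Given a block of $n$ consecutive bottom edges at height $t$, consider the (finite) set of bricks of row $t$ whose bottoms intersect this block. By translation invariance (\Cref{prop:invariance}) I may assume the left endpoint of the block is the left endpoint of some brick's bottom — more carefully, I should argue that conditionally on the past, the brick row at height $t$ restricted to the relevant window is, up to the harmless size-biasing at the global root which can be shifted away, a sequence of i.i.d.\ $\rho$-bricks, and that the left boundary of the descendant block always falls on a brick boundary. Then the number of bricks covering the block is $K=\min\{k\geq 1: B_1+\cdots+B_k\geq n\}$ with $(B_i,H_i)$ i.i.d.\ $\sim\rho$, and $M_{t+1}([a,b[)=H_1+\cdots+H_K$ (the last brick may overshoot, but its full top is still part of the descendant block, consistently with the infinite-population bookkeeping). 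This expression depends only on $n$, giving both the Markov property and time-homogeneity; positivity is immediate since $H_i\geq 1$.

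\medskip

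For the martingale property I would compute $\Expect{H_1+\cdots+H_K}$ by a Wald-type / renewal argument. Let $S_k=B_1+\cdots+B_k$ and $T_k=H_1+\cdots+H_k$; then $K$ is a stopping time for the filtration generated by $(B_i,H_i)$, $\Expect{K}<\infty$ because $B_i\geq 1$ forces $K\leq n$, and Wald's identity gives $\Expect{T_K}=\Expect{H}\,\Expect{K}$ and $\Expect{S_K}=\Expect{B}\,\Expect{K}$. Under criticality $\Expect{H}=\Expect{B}$, so $\Expect{T_K}=\Expect{S_K}$; and $\Expect{S_K}=n$ because the last brick's bottom is consumed exactly up to the block when we interpret overshoot via the i.i.d.\ stationary structure — more cleanly, $S_K$ is the length of the union of bottoms of the covering bricks, which by the stationary-renewal / translation-invariance structure has expectation exactly $n$ (the overshoot of $S_K$ beyond $n$ is exactly compensated by the undershoot $n-S_{K-1}$ on the left in the stationary picture). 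Hence $\Expect{M_{t+1}([a,b[)\mid M_t([a,b[)=n}=n$, which is the martingale identity; the finite second moment hypothesis guarantees integrability throughout.

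\medskip

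The main obstacle I anticipate is the boundary bookkeeping: making rigorous the claim that the descendant block is always delimited by brick boundaries and that, conditionally on the past, the relevant portion of the next brick row is genuinely i.i.d.\ $\rho$ (so that the size-biased root brick and the overshoot at the two ends of the block do not spoil the computation $\Expect{S_K}=n$). The clean way to handle this is to work directly in the infinite-population model where each generation is a bi-infinite stationary renewal-type sequence of bricks, use \Cref{prop:invariance} to place the left end of the block at a brick boundary, and then invoke the standard fact from renewal theory that the expected covering length of a length-$n$ window in a stationary renewal process equals $n$ (equivalently, apply optional stopping to the mean-zero walk $S_k - k\Expect{B}$ together with a symmetric control at the left boundary). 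Once this is set up, the Markov and homogeneity statements are immediate from the explicit transition, and the martingale statement follows from Wald plus criticality.
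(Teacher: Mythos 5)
Your skeleton (contiguous descendant block; transition law depending only on the block size; computing the one-step mean) is the right plan, and the Markov/homogeneity part essentially goes through, but the martingale computation contains concrete errors. First, a brick of row $t$ contributes its top $H_i$ to $M_{t+1}$ precisely when its \emph{right-most bottom node} lies in the block, i.e.\ when its right boundary $S_i=B_1+\cdots+B_i$ is at most $n$; a last brick whose bottom overshoots past $n$ therefore contributes \emph{nothing}, not ``its full top'' as you assert in the parenthesis. Thus, even granting a brick boundary at $0$, the one-step sum would be $H_1+\cdots+H_{K'}$ with $K'=\max\{k:S_k\le n\}$, which \emph{undershoots}: $\mathbb{E}[S_{K'}]<n$, so the Wald chain $\mathbb{E}[H]\cdot\mathbb{E}[K']=\mathbb{E}[B]\cdot\mathbb{E}[K']=\mathbb{E}[S_{K'}]$ gives strictly less than $n$. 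Second, you cannot ``use \Cref{prop:invariance} to place the left end of the block at a brick boundary'': the shift preserves the law of the whole wall but does not create a boundary at a prescribed point, and the descendant block's endpoints fall generically \emph{inside} the next row's bricks. Conditioning on a boundary being there is a genuine change of measure (the Palm measure), under which the straddling brick becomes $\rho^*$-size-biased --- precisely the structure that \Cref{def:brick} encodes via $\rho^*$ and the uniform offset $U_j$. Relatedly, the ``standard fact'' that the expected covering length of a length-$n$ window in a stationary renewal process equals $n$ is false: that expectation strictly exceeds $n$ because of overshoot on both sides.

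The paper sidesteps this entire renewal/Palm bookkeeping with a short linearity argument: by shift invariance $\mathbb{E}[M_1([a,b[)]$ depends only on $b-a$, and since $M_1([0,n[)=\sum_{j=0}^{n-1}M_1([j,j+1[)$ with all summands of equal mean, $\mathbb{E}[M_1([0,n[)]=cn$ for a single constant $c$; the strong law gives $M_1([0,n[)/n\to\mathbb{E}[H]/\mathbb{E}[B]=1$ a.s., and the crude domination $M_1([0,n[)\le H_{0,0}+\sum_{i=1}^n H_{i,0}$ makes the ratio uniformly integrable, hence $c=1$. Your route can be repaired --- replace the false identity $\mathbb{E}[S_K]=n$ by the stationary-renewal fact that the expected number of brick boundaries in $\{1,\dots,n\}$ is $n/Z$, together with the Palm-measure fact that the brick to the left of a typical boundary has the \emph{unbiased} law $\rho$ with $\mathbb{E}[H]=Z$, giving the decorated sum mean $n$ --- but as written the argument does not close the martingale identity.
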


\begin{proof}
The fact that $\left(M_t([a,b[) \right)_{t \in \Z_{\geq 0}}$ is a Markov chain is immediate from the definition of the model and by invariance by shift of the root edge (Proposition \ref{prop:invariance}). We now want to show that it is a martingale (the filtration $ \mathcal{F}_{n}$ being generated by the first $n$ rows of the brick wall). Let us first check that $ \mathbb{E}[M_1([0,n[)] < \infty$. To see this, we use the notation of~\Cref{def:brick} and the domination 
  \begin{eqnarray}\label{eq:boundtrivial}M_1([0,n[) \leq H_{0,0} + \sum_{i=1}^n H_{i,0}.  \end{eqnarray}
While $H_{i,0}$ for $i \geq 1$ are i.i.d with  a second moment, the first moment of $H_{0,0}$ is also bounded by $ \mathbb{E}[H_{0,0}] =  \frac{1}{ \mathbb{E}[B]}\mathbb{E}[H\cdot B] < \frac{1}{ \mathbb{E}[B]}\mathbb{E}[H^2+ B^2] < \infty$. By the Markov property, it  remains to show that $\mathbb{E}[M_1([a,b[)] = (b-a)$.
By invariance of the brick wall by shift of the root edge we have $\mathbb{E}[M_1([a,b[)] = \mathbb{E}[M_1([0,b-a[)]$, and so by linearity of expectation, there exists $c>0$ so that  for any $n \in \Z_{\geq 0}$,
\begin{eqnarray}
  \Expect{M_1([0,n[)}=c \, n. \label{eq:defc}
\end{eqnarray}
One can prove that $c=1$ by a direct tedious calculation (splitting the calculation according to the value of the last bottom-right corner of a brick before $n$), or using a limiting argument: putting the root brick of dimensions $(B_{0,0}^*,H_{0,0})$ aside, the law of large numbers immediately entails that almost surely we have
$$ \lim_{n \to \infty} \frac{M_1([0,n[)}{n} =  \frac{ \mathbb{E}[H]}{ \mathbb{E}[B]} =1.$$ On the other hand, it follows from \eqref{eq:boundtrivial} that $(\frac{M_1([0,n[)}{n} : n \geq 1)$ is uniformly integrable. By dominated convergence, we deduce that $n^{-1}\cdot \Expect{M_1([0,n[)} \to 1$, hence by \eqref{eq:defc} we must have $c=1$.
\end{proof}

\subsection{Behavior of subpopulations after one time step}
\label{sec:one-time-step}
In this section we establish variance estimates for one step of the Markov chain describing the population evolution which will be useful in~\Cref{sec:fidi} when dealing with the full scaling limit of $(M_k ([0,n[) : k \geq 0)$.
For ease of notation, in this section we will write $M_k^{(n)}$ for $M_k([0,n[)$. We will first obtain an asymptotic estimate for the variance of $M_1^{(n)}$, then prove a CLT for $M_1^{(n)}$.

\paragraph{Variance.}
In the terminology of~\cite{cpr-book-2022}, $M_1^{(n)}$ is a \textbf{inhomogeneous compound renewal process}. The marginal laws $\beta$ and $\eta$, as well as the bottom-biased laws $\beta^*,\eta^*$ all have finite variances -- remember that for the root brick, we take the portion of the bottom that lies to the right of 0, whose law is uniform on $\{1, \dots, b\}$, conditionally on $B^*=b$, and the top of the root brick has the bottom-biased law $\eta^*$. Thus,~\cite[Theorem 1.2.1]{cpr-book-2022} applies and gives the following estimate.
\begin{prop}  \label{prop:varianceonestep}
We have
\begin{equation}
\Var{M_1^{(n)}}=n\sigma^2+\smallo{n},
\label{eq:pop-variance-after-one-step}
\end{equation}
where 
\begin{equation}\sigma^2=\frac{1}{Z}\Expect{(H-B)^2}=\frac{1}{Z}(\sigma^2_{\eta}+\sigma^2_{\beta}-2\textnormal{Cov}(B,H)),
\label{eq:sigma-definition}
\end{equation}
for $(B,H)$ a random variable of law $\rho$, with $Z=\Expect{\beta}=\Expect{\eta}$.
\end{prop}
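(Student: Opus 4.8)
The plan is to recognize $M_1(n)$ as a renewal-type functional of the brick sequence and reduce the computation of its variance to a bare-hands renewal computation, then extract the $\sigma^2$ constant. Concretely, forget the root brick for a moment and work with the \emph{one-sided} version: stack i.i.d.\ bricks $(B_i,H_i)\sim\rho$, $i\geq 1$, to the right of $0$, let $N(n)=\min\{k : B_1+\cdots+B_k \geq n\}$ be the index of the first brick whose bottom reaches past $n$, and observe that the number of top-nodes produced over $[0,n[$ equals $\sum_{i=1}^{N(n)-1} H_i$ plus a boundary term from the $N(n)$-th brick (the portion of its top lying above $[\,B_1+\cdots+B_{N(n)-1},\,n\,[$, which is deterministic given the bricks but contributes only $O(1)$ in a controlled way). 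The key identity is then $M_1(n) = n + \sum_{i=1}^{N(n)}(H_i-B_i) + (\text{boundary error})$, because $\sum_{i=1}^{N(n)} B_i = n + (\text{overshoot})$ and the overshoot, the root-brick correction, and the last-top truncation are all $O(1)$ with finite second moments under the third-moment hypothesis \eqref{eq:hyp-third-moment}.

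Given that identity, the plan is to apply a CLT/variance asymptotic for randomly-stopped sums of i.i.d.\ centered increments $X_i := H_i - B_i$ (note $\Expect{X_i}=0$ by criticality \eqref{eq:criticality-hyp}), where the stopping index $N(n)$ is a renewal counting variable driven by the $B_i$'s. By Wald's second identity (or Anscombe's theorem together with a uniform-integrability argument), $\Var{\sum_{i=1}^{N(n)} X_i} \sim \Expect{N(n)}\cdot\Expect{X_1^2}$. Since $\Expect{N(n)} \sim n/\Expect{B}$ by the elementary renewal theorem, this gives $\Var{M_1(n)} \sim \frac{n}{\Expect{B}}\Expect{(H-B)^2} = n\sigma^2$, which is exactly \eqref{eq:pop-variance-after-one-step}. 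The second expression in \eqref{eq:sigma-definition} is just the algebraic expansion $\Expect{(H-B)^2} = \Var{H}+\Var{B}-2\operatorname{Cov}(B,H) = \sigma_\eta^2+\sigma_\beta^2-2\operatorname{Cov}(B,H)$ using $\Expect{H}=\Expect{B}$, so nothing further is needed there.

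Alternatively, and more in the spirit of the paper's citation, one simply checks the hypotheses of \cite[Theorem 1.2.1]{cpr-book-2022}: $M_1(n)$ is an inhomogeneous compound renewal process with inter-renewal law $\beta$ (bulk) and $\beta^*$-size-biased first step restricted to the right of $0$, reward law $\eta$ (bulk) and $\eta^*$ (first step), all of which have finite variance by \eqref{eq:hyp-third-moment}; the theorem then directly yields $\Var{M_1(n)} = n\sigma^2 + \smallo{n}$ with $\sigma^2$ as in \eqref{eq:sigma-definition}. In the write-up I would present the self-contained renewal argument as the main line and mention the citation as a shortcut, since the inhomogeneity coming from the size-biased root brick is exactly what the cited theorem is designed to absorb.

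The main obstacle is bookkeeping rather than conceptual: one must verify that the several $O(1)$ corrections — the renewal overshoot $B_1+\cdots+B_{N(n)} - n$, the truncation of the last brick's top, and the contribution of the size-biased root brick $(B_0^*,H_0)$ with its uniform pin-point $U_0$ — all have uniformly bounded second moments (this is where $\Expect{B^3},\Expect{H^3}<\infty$ enters, via size-biasing turning a third moment into a second moment, as already seen in the proof of \Cref{prop:slice-of-pop-martingale}), so that they are genuinely $\smallo{\sqrt n}$ in $L^2$ and do not perturb the leading-order variance. Once the uniform integrability of $N(n)/n$ and of the increments is in hand, Wald's identity and the elementary renewal theorem close the argument.
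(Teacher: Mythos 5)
Your proposal is correct, and your ``Alternatively'' paragraph is in fact exactly the paper's proof: the paper recognizes $M_1(n)$ as an inhomogeneous compound renewal process, checks that $\beta,\eta,\beta^*,\eta^*$ all have finite variances (the third-moment hypothesis is what survives the size-biasing of the root brick), and cites \cite[Theorem~1.2.1]{cpr-book-2022}. Your self-contained main line is a valid and more elementary substitute for that citation: writing $M_1(n)=n+\sum_{i=1}^{N(n)}(H_i-B_i)+O(1)$ and applying Wald's second identity (legitimate because $N(n)\leq n$ is a bounded stopping time for the filtration $\sigma((B_i,H_i):i\leq k)$ and the increments $H_i-B_i$ are centered by criticality) together with the elementary renewal theorem $\Expect{N(n)}\sim n/\Expect{B}$ gives the leading-order term $n\sigma^2$; the boundary corrections have second moments bounded uniformly in $n$ precisely because $\Expect{B^3},\Expect{H^3}<\infty$, and Cauchy--Schwarz then kills their covariance with the main sum at order $O(\sqrt{n})=\smallo{n}$. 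One small modeling inaccuracy worth fixing: the straddling brick does not contribute a ``portion of its top'' proportional to how much of its bottom lies in $[0,n[$ --- in the brick-wall coding every top node of a brick is a child of its right-most bottom node, so brick $N(n)$ contributes either all $H_{N(n)}$ top nodes or none, according as its bottom-right vertex lies in $\{1,\dots,n\}$ or not. This does not affect your bound, since either way the correction is dominated by $H_{N(n)}$, but it is worth having the combinatorics straight before plugging into an error estimate.
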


\begin{rem}
  It is at this point that we really need to assume that the brick law $\rho$ has finite third moments, to be able to get to Brownian scaling limits later on, because of the size-biasing of the root brick. {At first sight, it might seem that we can require only $\Expect{BH^2} <\infty$ instead of $\Expect{H^3} < \infty$, but later on we will make use of the duality property of Proposition~\ref{prop: forest duality}, so that we need $H$ to have a finite third moment as well.}
\end{rem}

\paragraph{CLT after one time step.}

We now prove a CLT, first for $M_1^{(n)}$, then for tuples of subpopulations after one time step.

\begin{prop} \label{prop:cltonestepsingle}
We have the following convergence in distribution:
\begin{equation}
\frac{M_1^{(n)}-n}{\sqrt{n}}\xrightarrow[n \to \infty]{(d)}\mathcal{N}(0,\sigma^2),
\end{equation}
where $\sigma$ is defined by~\eqref{eq:sigma-definition}.
\end{prop}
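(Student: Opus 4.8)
The plan is to recognise $M_1(n)$ as a (delayed, ``inhomogeneous'') compound renewal process and then to deduce the CLT either by quoting the general central limit theorem for such processes from \cite{cpr-book-2022}---exactly as was done for the variance in \Cref{prop:varianceonestep}---or, more hands-on, by reducing to the classical renewal--reward CLT. Let me describe the second route, which is essentially self-contained. With the notation of \Cref{def:brick}, let $R_k$ denote the abscissa of the bottom-right corner of the $k$-th brick at height $0$ lying weakly to the right of the root edge, so that $R_1=B_{0,0}-U_0\ge 1$ is the overshoot at $0$ of the stationary renewal process formed by these corners, and $R_{k+1}=R_1+B_{1,0}+\dots+B_{k,0}$ for $k\ge1$. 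By \Cref{def:forests} a time-$1$ node is a descendant of $\{1/2,\dots,n-1/2\}$ exactly when its unique parent---the rightmost bottom node of its brick, sitting at abscissa $R_k-1/2$---lies in $\{1/2,\dots,n-1/2\}$, i.e.\ when $R_k\in\{1,\dots,n\}$. Hence, writing $\widetilde M(t):=\sum_{i\ge1}H_{i,0}\,\Indic{B_{1,0}+\dots+B_{i,0}\le t}$ (and $\widetilde M(t)=0$ for $t<0$),
\[
M_1(n)=H_{0,0}\,\Indic{R_1\le n}+\sum_{i\ge1}H_{i,0}\,\Indic{R_1+B_{1,0}+\dots+B_{i,0}\le n}=H_{0,0}\,\Indic{R_1\le n}+\widetilde M(n-R_1),
\]
where $\widetilde M$ is the cumulative reward of an \emph{ordinary} renewal--reward process built from the i.i.d.\ pairs $(B_{i,0},H_{i,0})\sim\rho$, $i\ge1$, and where $R_1$ (together with $H_{0,0}$) is independent of these pairs.

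Next I would argue that the delay only contributes a lower-order error. Since $\Expect{B^3},\Expect{H^3}<\infty$, the top of the root brick has finite second moment, $\Expect{H_{0,0}^2}=\tfrac1Z\Expect{BH^2}<\infty$ by Young's inequality, and the overshoot $R_1$ has finite first moment, $\Expect{R_1}=\tfrac1{2Z}(\Expect{B^2}+\Expect{B})<\infty$; hence $H_{0,0}\Indic{R_1\le n}-R_1=\BigO{1}$ in probability. Recalling that $\Expect{M_1(n)}=n$ (this is exactly \Cref{prop:slice-of-pop-martingale} with $c=1$), this gives
\[
\frac{M_1(n)-n}{\sqrt n}=\frac{\widetilde M(n-R_1)-(n-R_1)}{\sqrt n}+\frac{H_{0,0}\Indic{R_1\le n}-R_1}{\sqrt n}=\frac{\widetilde M(n-R_1)-(n-R_1)}{\sqrt n}+\smallo{1}.
\]
Because $R_1=\BigO{1}$, the random time change $n\mapsto n-R_1$ is harmless: conditioning on $\{R_1=v\}$ (legitimate by the independence above) and using $\sqrt{(n-v)/n}\to1$ with Slutsky's lemma reduces matters to the ordinary process $\widetilde M$ at a deterministic time, and dominated convergence then removes the conditioning.

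It then remains to prove the classical compound renewal CLT for $\widetilde M$. Set $\bar S_k=B_{1,0}+\dots+B_{k,0}$, $N(n)=\#\{k\ge1:\bar S_k\le n\}$, and $D_i:=H_{i,0}-B_{i,0}$. Then $\widetilde M(n)-\bar S_{N(n)}=\sum_{i=1}^{N(n)}D_i$, so $\widetilde M(n)-n=\sum_{i=1}^{N(n)}D_i-(n-\bar S_{N(n)})$, and the age term $0\le n-\bar S_{N(n)}<B_{N(n)+1,0}$ is $\BigO{1}$ in probability by elementary renewal theory (using $\Expect{B^2}<\infty$). The $D_i$ are i.i.d.\ and, \emph{by the criticality assumption} $\Expect{H}=\Expect{B}=Z$, centred with $\Expect{D^2}=\Expect{(H-B)^2}$; since $N(n)/n\to1/Z$ almost surely, Anscombe's theorem for randomly indexed sums yields $\tfrac1{\sqrt n}\sum_{i=1}^{N(n)}D_i\xrightarrow{(d)}\mathcal N\!\big(0,\tfrac1Z\Expect{(H-B)^2}\big)=\mathcal N(0,\sigma^2)$ with $\sigma^2$ as in \eqref{eq:sigma-definition}. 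Combining the three steps gives the statement.

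The renewal-theoretic estimates are routine; the one point that genuinely needs care is the second step---namely that the size-biased root brick is negligible at scale $\sqrt n$---and this is exactly where the third moment hypothesis \eqref{eq:hyp-third-moment} is used (it makes $\eta^*$, hence $H_{0,0}$, square-integrable and makes $R_1$ integrable), which is why this proposition, like \Cref{prop:varianceonestep}, requires strictly more than the second moments sufficient for \Cref{prop:slice-of-pop-martingale}. An essentially equivalent write-up is to invoke directly the CLT for inhomogeneous compound renewal processes from \cite[Chapter~1]{cpr-book-2022}, whose hypotheses hold here for the same moment reasons and whose limiting variance coincides with \eqref{eq:sigma-definition} by the computation already carried out for \Cref{prop:varianceonestep}.
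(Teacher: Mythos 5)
Your proof is correct, but it follows a genuinely different route from the paper. The paper couples the bottom and top renewal processes into the bidimensional partial-sum process $(\widehat B_{\lfloor nt\rfloor},\widehat H_{\lfloor nt\rfloor})_{t\ge 0}$, applies the multidimensional functional CLT to obtain a bivariate Brownian limit $(W^{(B)},W^{(H)})$, passes to almost sure convergence via Skorokhod's representation theorem, and then performs a random time change $n\mapsto t_n$ to read off $M_1(n)=\widehat H_{t_n}$ and its Gaussian limit as $W^{(H)}(1/Z)-W^{(B)}(1/Z)$. You instead peel off the size-biased root brick explicitly, writing $M_1(n)=H_{0,0}\Indic{R_1\le n}+\widetilde M(n-R_1)$, argue that this delay term is $\BigO{1}$ and hence $o(\sqrt n)$, and then reduce the remaining ordinary renewal--reward process $\widetilde M$ to the random-index sum $\sum_{i=1}^{N(n)}D_i$ with $D_i=H_{i,0}-B_{i,0}$ centred (by criticality), which you handle by Anscombe's theorem. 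Both arguments are valid; yours is arguably more elementary and self-contained (no functional CLT, no Skorokhod embedding), and it makes visible exactly where the third-moment hypothesis enters (square-integrability of $\eta^*$, integrability of the stationary overshoot). The paper's FCLT approach is slightly heavier machinery for this single statement but pays off for free when extending to tuples of subpopulations (\Cref{prop:clt-tuple-of-pop-slices}) via the same bidimensional Brownian limit; with your route you would need a multivariate version of Anscombe's theorem, or a separate argument, to get the joint asymptotic independence across intervals.
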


\begin{proof}
Let us consider a sequence of independent variables $((B_i,H_i))_{i \geq 0}$, where $(B_0,H_0)$ has law $\rho^*$, and $(B_i,H_i)$ has law $\rho$ for $i \geq 1$, as well as the associated sums $\widehat{H}_n=\sum_{0\leq i \leq n}H_i$, $\widehat{B}_n=-U_0+\sum_{0\leq i \leq n}B_i$, where, conditionally on $B_0$, $U_{0}$ is uniform on $\{0, \dots, B_0-1\}$. Then, by the multidimensional FCLT, we have
\begin{equation}
\Bigg(\frac{\widehat{B}_{\lfloor nt \rfloor}-Znt}{\sqrt{n}}, \frac{\widehat{H}_{\lfloor nt \rfloor}-Znt}{\sqrt{n}} \Bigg)_{t \geq 0} \xrightarrow[n \to \infty]{(d)}\Big(W^{(B)}(t),W^{(H)}(t)\Big)_{t \geq 0},
\label{eq:cv-bidim-brownian}
\end{equation}
where $(W^{(B)}(t),W^{(H)}(t))$ is a bidimensional Brownian motion with covariance matrix $C(\rho)$, the covariance matrix of $\rho$. 

By Skorokhod's representation theorem, let us assume that we are working on a probability space for which the convergence~\eqref{eq:cv-bidim-brownian} holds in the almost sure sense rather than in distribution.  Now, we have $M_1^{(n)}=\widehat{H}_{t_n}$, where $t_n=\inf\{k \geq 0 \lvert \widehat{B}_k > n \}-1$. Since we have assumed almost sure convergence, we have, uniformly in $t \geq 0$, 
\[
\widehat{B}_{\lfloor nt \rfloor} = Znt + \sqrt{n}W^{(B)}(t) + \smallo{\sqrt{n}},
\]
so that with high probability $t_n=n/Z+\smallo{n}$, and, using the continuity of $W^{(B)}$,
\begin{equation*}
Zt_n = n -\sqrt{n}W^{(B)}\Big(\frac{1}{Z}\Big) + \smallo{\sqrt{n}},
\end{equation*}
therefore
\begin{align*}
M_1^{(n)}&=\widehat{H}_{t_n}=Z\tau_n+\sqrt{n}W^{(H)}\Big(\frac{t_n}{n}\Big)+\smallo{\sqrt{n}}\\
&=n+\sqrt{n}\Big(W^{(H)}\Big(\frac{1}{Z}\Big)-W^{(B)}\Big(\frac{1}{Z}\Big)\Big)+\smallo{\sqrt{n}}.
\end{align*}
Since $W^{(H)}-W^{(B)}$ is a Brownian motion of variance $\Expect{(H_1-B_1)^2}$, we obtain that $(M_1^{(n)}-n)/\sqrt{n}$ does converge in distribution to a centered Gaussian variable of variance 
\begin{displaymath}
\frac{\Expect{(H_1-B_1)^2}}{Z}=\sigma^2. \qedhere
\end{displaymath}
\end{proof}

The extension of the previous result to several populations is proved similarly:
\begin{prop}
For $p$ a positive integer, fix $s_1< \dots < s_{p+1} \in \R$, and consider the subpopulations $M_{1}(n,i)=M_{1}([\lfloor ns_i\rfloor, \lfloor ns_{i+1}\rfloor[)$, for $1 \leq i \leq p$. Then, we have the following convergence:
\[
\left(\frac{M_1(n,i)-n(s_{i+1}-s_i)}{\sqrt{n}}\right)_{1 \leq i \leq p}\xrightarrow[n \to \infty]{(d)}\mathcal{N}(\vec{0},\sigma^2\cdot\text{Diag}(\{s_{i+1}-s_i\})),
\]
where $\text{Diag}(\{s_{i+1}-s_i\})$ is the diagonal $p \times p$ matrix with coefficients $s_{i+1}-s_i, \, 1 \leq i \leq p$.
\label{prop:clt-tuple-of-pop-slices}
\end{prop}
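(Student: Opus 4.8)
The plan is to reduce the multidimensional statement to the one-dimensional CLT of Proposition~\ref{prop:cltonestepsingle} by exploiting the renewal structure underlying the brick wall. First I would fix $s_1 < \dots < s_{p+1}$ and introduce, exactly as in the previous proof, a sequence of independent variables $((B_i,H_i))_{i \geq 0}$ with $(B_0,H_0)\sim\rho^*$ and $(B_i,H_i)\sim\rho$ for $i\geq1$, together with the partial sums $\widehat{H}_k=\sum_{0\leq i\leq k}H_i$ and $\widehat{B}_k=U_{B_0}+\sum_{1\leq i\leq k}B_i$. The key point is that the joint law of the populations $\bigl(M_1([\lfloor ns_i\rfloor,\lfloor ns_{i+1}\rfloor[) : 1\leq i\leq p\bigr)$ is, up to a boundary correction at the left endpoint $\lfloor ns_1\rfloor$, the same as that of the increments of the process $k\mapsto\widehat{H}_k$ sampled at the (random) renewal indices where the process $k\mapsto\widehat{B}_k$ crosses the successive thresholds $\lfloor ns_i\rfloor$; indeed, a subpopulation starting at an interior point $\lfloor ns_i\rfloor$ is governed by a fresh i.i.d.\ sequence of (non-size-biased) bricks once the renewal structure has "passed" that abscissa, and the single size-biased root brick contributes only to the very first slice and is asymptotically negligible after rescaling by $\sqrt n$.

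Concretely, the second step is to invoke the multidimensional functional CLT~\eqref{eq:cv-bidim-brownian} to get
\[
\Bigg(\frac{\widehat{B}_{\lfloor nt\rfloor}-Znt}{\sqrt n},\frac{\widehat{H}_{\lfloor nt\rfloor}-Znt}{\sqrt n}\Bigg)_{t\geq0}\xrightarrow[n\to\infty]{(d)}\bigl(W^{(B)}(t),W^{(H)}(t)\bigr)_{t\geq0},
\]
a bidimensional Brownian motion with covariance matrix $C(\rho)$, and to pass to a probability space on which this holds almost surely via Skorokhod's representation theorem. Then, arguing exactly as in the scalar case, the random crossing index $t_n^{(i)}=\inf\{k : \widehat{B}_k > \lfloor ns_i\rfloor\}-1$ satisfies $Zt_n^{(i)} = \lfloor ns_i\rfloor - \sqrt n\,W^{(B)}(s_i/Z) + \smallo{\sqrt n}$ uniformly, by continuity of $W^{(B)}$, and therefore the $i$-th slice is
\[
M_1(n,i) = \widehat{H}_{t_n^{(i+1)}}-\widehat{H}_{t_n^{(i)}}+\smallo{\sqrt n} = n(s_{i+1}-s_i) + \sqrt n\Bigl(\bigl(W^{(H)}-W^{(B)}\bigr)(s_{i+1}/Z)-\bigl(W^{(H)}-W^{(B)}\bigr)(s_i/Z)\Bigr)+\smallo{\sqrt n}.
\]
Since $D:=W^{(H)}-W^{(B)}$ is a one-dimensional Brownian motion of variance $\Expect{(H-B)^2}$ with independent increments, the vector $\bigl((D(s_{i+1}/Z)-D(s_i/Z))\bigr)_{1\leq i\leq p}$ is Gaussian with independent coordinates and covariance $\tfrac{1}{Z}\Expect{(H-B)^2}(s_{i+1}-s_i)=\sigma^2(s_{i+1}-s_i)$, which is exactly the announced limit $\mathcal{N}(\vec 0,\sigma^2\cdot\mathrm{Diag}(\{s_{i+1}-s_i\}))$.

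The main obstacle, and the step requiring genuine care rather than a verbatim repetition of the scalar proof, is the bookkeeping at the boundary: one must check that the size-biased root brick and the fictitious "overshoot" individuals at the leftmost threshold $\lfloor ns_1\rfloor$ affect only an $\smallo{\sqrt n}$ number of individuals and do not destroy the asymptotic independence of the slices. This is handled by using invariance under shift of the root edge (Proposition~\ref{prop:invariance}) to move the size-biasing to the extreme left of the configuration, so that all interior slices $i\geq2$ see only genuine i.i.d.\ bricks, and by noting that a single brick has a second moment under $\rho^*$ (finite since $\rho$ has a third moment), hence contributes $\BigO{1}=\smallo{\sqrt n}$. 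The remaining error terms coming from the overshoot at each threshold $\lfloor ns_i\rfloor$ are each of size one brick and thus also $\smallo{\sqrt n}$, which justifies the $\smallo{\sqrt n}$ in the display above.
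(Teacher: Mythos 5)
Your proposal is correct and follows exactly the route the paper intends: the paper supplies no proof, saying only that the multidimensional statement is "proved similarly" to Proposition~\ref{prop:cltonestepsingle}, and your argument is precisely that natural extension — couple through the bidimensional FCLT, pass to almost sure convergence via Skorokhod, write each slice as an increment of $\widehat{H}$ between two renewal crossings, and read off independence of coordinates from independence of increments of $W^{(H)}-W^{(B)}$. One small remark: the identity $M_1(n,i)=\widehat{H}_{t_n^{(i+1)}}-\widehat{H}_{t_n^{(i)}}$ holds exactly (not up to $\smallo{\sqrt n}$) by the very definition of $t_n^{(i)}$ as the last renewal index below $\lfloor ns_i\rfloor$, so the "overshoot" bookkeeping at interior thresholds is not needed; and the size-biased root brick lands in whatever slice contains $0$ (not necessarily the first), but this changes nothing since it contributes $\BigO{1}$ in any case. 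These are cosmetic points and do not affect the validity of the argument.
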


\paragraph{Covariance.}
As a corollary of the previous results we have the following estimate for the covariance between different subpopulations after one time step.
\begin{prop} \label{prop:bound on covariance}
For any $C_1,C_2,C_3 \in \R$ with $0 <C_1\leq C_2<C_3$, we have
\[
\textnormal{Cov}\left(M_1([0,\lfloor C_1n\rfloor[), M_1([\lfloor C_2n\rfloor, \lfloor C_3n\rfloor[)\right)=o(n) \text{ as } n \to \infty.
\]

\end{prop}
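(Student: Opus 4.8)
The plan is to deduce the covariance estimate from the variance asymptotics of Proposition~\ref{prop:varianceonestep} together with the multidimensional CLT of Proposition~\ref{prop:clt-tuple-of-pop-slices}, using the elementary identity that relates a covariance to a difference of variances. Concretely, write $A_n = M_1([0,\lfloor C_1 n\rfloor[)$ and $B_n = M_1([\lfloor C_2 n\rfloor,\lfloor C_3 n\rfloor[)$. By additivity of the population count over disjoint intervals and translation invariance of the brick wall (Proposition~\ref{prop:invariance}), the quantity $M_1([0,\lfloor C_1 n\rfloor[\,\cup\,[\lfloor C_2 n\rfloor,\lfloor C_3 n\rfloor[)$ does \emph{not} in general equal $A_n+B_n$ because the two source intervals are separated by the gap $[\lfloor C_1 n\rfloor,\lfloor C_2 n\rfloor[$; however, $A_n + B_n$ is exactly the number of time-$1$ descendants of the union of the two intervals once we remove the descendants of the gap, and each of these three counts is itself (the restriction of) an inhomogeneous compound renewal process to which Proposition~\ref{prop:varianceonestep} applies after an appropriate shift. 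The cleanest route is: let $S_n = M_1([0,\lfloor C_3 n\rfloor[)$ be the full count on $[0,\lfloor C_3 n\rfloor[$, let $G_n = M_1([\lfloor C_1 n\rfloor,\lfloor C_2 n\rfloor[)$ be the count on the gap, and observe $S_n = A_n + G_n + B_n$ up to a bounded boundary correction coming from the two bricks straddling $\lfloor C_1 n\rfloor$ and $\lfloor C_2 n\rfloor$ (these contribute $O(1)$ in $L^1$ by the moment hypothesis~\eqref{eq:hyp-third-moment}, hence $o(\sqrt n)$, which is negligible at the scale of the covariance).

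Granting $S_n = A_n + G_n + B_n + R_n$ with $\Expect{R_n^2} = o(n)$, expand
\[
\Var{S_n} = \Var{A_n} + \Var{G_n} + \Var{B_n} + 2\,\textnormal{Cov}(A_n,G_n) + 2\,\textnormal{Cov}(G_n,B_n) + 2\,\textnormal{Cov}(A_n,B_n) + o(n).
\]
By Proposition~\ref{prop:varianceonestep} applied to each of $S_n, A_n, G_n, B_n$ (with lengths $\sim C_3 n$, $C_1 n$, $(C_2-C_1)n$, $(C_3-C_2)n$ respectively, and using that $C_1+(C_2-C_1)+(C_3-C_2)=C_3$), the four variance terms satisfy $\Var{S_n} - \Var{A_n} - \Var{G_n} - \Var{B_n} = o(n)$. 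Hence
\[
\textnormal{Cov}(A_n,G_n) + \textnormal{Cov}(G_n,B_n) + \textnormal{Cov}(A_n,B_n) = o(n).
\]
This is not yet the claim, since we get the sum of three covariances rather than $\textnormal{Cov}(A_n,B_n)$ alone. To isolate it, I would run the same computation with a sliding family of gaps: the identity above holds for \emph{every} admissible triple of thresholds, so comparing the configuration $(C_1,C_2,C_3)$ with the degenerate configurations where the gap shrinks or the outer interval is split differently yields a linear system whose solution forces each pairwise covariance to be $o(n)$. Equivalently — and more transparently — one can appeal directly to Proposition~\ref{prop:clt-tuple-of-pop-slices}: for the partition $s_1=0 < s_2 = C_1 < s_3 = C_2 < s_4 = C_3$ it states that $\big((A_n-nC_1)/\sqrt n,\ (G_n - n(C_2-C_1))/\sqrt n,\ (B_n - n(C_3-C_2))/\sqrt n\big)$ converges to a Gaussian vector with \emph{diagonal} covariance $\sigma^2\,\textnormal{Diag}(C_1,\,C_2-C_1,\,C_3-C_2)$, so the limiting covariance between the first and third coordinates is $0$.

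The only genuine work is upgrading this convergence-in-distribution statement to convergence of the (rescaled) covariances, i.e.\ establishing uniform integrability of $\big((A_n-nC_1)(B_n-n(C_3-C_2))/n\big)_{n\ge1}$, equivalently an $L^2$ bound $\sup_n \Expect{(A_n-nC_1)^2/n} < \infty$ and likewise for $B_n$. This is exactly what Proposition~\ref{prop:varianceonestep} provides (the second moments of the centred variables are $n\sigma^2 + o(n)$, hence bounded after dividing by $n$), combined with the fact that the means are $n C_1 + O(1)$ and $n(C_3-C_2)+O(1)$ by the martingale property of Proposition~\ref{prop:slice-of-pop-martingale} plus the $O(1)$ boundary correction for the biased root brick as in the proof of that proposition. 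With uniform integrability in hand, $\textnormal{Cov}(A_n,B_n)/n \to 0$, which is the assertion. The main obstacle is precisely this uniform-integrability bookkeeping together with controlling the $O(1)$ boundary bricks at $\lfloor C_1 n\rfloor$ and $\lfloor C_2 n\rfloor$; everything else is a direct combination of the already-proven one-step estimates.
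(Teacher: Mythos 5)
Your second route — invoking Proposition~\ref{prop:clt-tuple-of-pop-slices} for the triple of slices and then upgrading the distributional convergence of $(m(n),m'(n))$ to convergence of $\Expect{m(n)m'(n)}$ via uniform integrability — is exactly what the paper does. One correction on that route: you claim the needed UI is ``equivalently an $L^2$ bound $\sup_n \Expect{(A_n-nC_1)^2/n}<\infty$'', but $L^2$-boundedness of each factor only gives $L^1$-boundedness of the product, not UI. The paper instead uses that $\Expect{m(n)^2}\to\sigma^2$ (Proposition~\ref{prop:varianceonestep}) together with $m(n)^2\xrightarrow{(d)}\sigma^2\chi^2_1$ to get UI of $m(n)^2$ and $m'(n)^2$ via the equality case of Fatou's lemma (\cite[Lemma 4.11]{kallenberg}); the pointwise bound $|m(n)m'(n)|\le\tfrac12(m(n)^2+m'(n)^2)$ then makes the product UI. You have the right ingredient (convergence, not mere boundedness, of the rescaled variances), just not the right lemma.

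Your first route (variance decomposition) is genuinely different from the paper's and in fact cleaner, and you undersell it. The ``boundary correction'' $R_n$ you introduce is identically zero: by Definition~\ref{def:forests} each time-$1$ node has a unique parent (the right-most bottom node of its brick), so $M_1([a,c[) = M_1([a,b[) + M_1([b,c[)$ holds exactly — the population counts are additive over disjoint intervals, with no straddling term. Granting this, Proposition~\ref{prop:varianceonestep} plus translation invariance already close the argument: applying $2\,\mathrm{Cov}(X,Y)=\Var{X+Y}-\Var{X}-\Var{Y}$ to the two pairs of \emph{adjacent} slices $(A_n,G_n)$ and $(G_n,B_n)$ (whose unions are again single intervals, hence covered by the variance estimate) gives $\mathrm{Cov}(A_n,G_n)=o(n)$ and $\mathrm{Cov}(G_n,B_n)=o(n)$, while the same identity applied to $S_n=A_n+G_n+B_n$ gives $\mathrm{Cov}(A_n,G_n)+\mathrm{Cov}(G_n,B_n)+\mathrm{Cov}(A_n,B_n)=o(n)$; subtracting yields $\mathrm{Cov}(A_n,B_n)=o(n)$. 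No ``sliding family'' or ``linear system'' is needed, and this route bypasses the CLT and the UI bookkeeping entirely, relying only on the variance estimate, the martingale property and shift invariance — arguably the more elementary argument of the two.
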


\begin{proof}
For ease of notation, let us assume $C_1=C_2=1$ and $C_3=2$, and let us define 
\[
m(n):=\frac{M_1([0,n[)-n}{\sqrt{n}}, \ \ m'(n):=\frac{M_1([n,2n[)-n}{\sqrt{n}}, \ \ v(n):=m(n)m'(n).
\]
From~\Cref{prop:clt-tuple-of-pop-slices} we have
\[
 m(n)^2  \xrightarrow[n \to \infty]{(d)}\sigma^2\cdot \chi^2_1
\] 
\[
 m'(n)^2  \xrightarrow[n \to \infty]{(d)}\sigma^2\cdot \chi^2_1,
\] 
and furthermore~\eqref{eq:pop-variance-after-one-step} implies convergence of their means. It follows from the equality case in Fatou's lemma (see \cite[Lemma 4.11]{kallenberg}) that $(m(n)^2 : n \geq 1)$ and $(m'(n)^2 : n \geq 1)$ are uniformly integrable. Moreover, we have
\begin{align*}
\Expect{\lvert v(n)\rvert\Indic{v(n)>K}}\leq& \Expect{\lvert m(n) m'(n) \rvert \left( \Indic{m(n)>K}+ \Indic{m'(n)>K}\right)}\\
\leq& \frac{1}{2}\Expect{m(n)^2}\Expect{m'(n)^2\Indic{m'(n)>K}}+\frac{1}{2}\Expect{m(n)^2\Indic{m(n)>K}}\Expect{m'(n)^2},
\end{align*}
so that $v(n)$ is uniformly integrable as well. Using~\Cref{prop:clt-tuple-of-pop-slices} once again, $(m(n),m'(n))$ converges in distribution to $(Y_1,Y_2)\stackrel{(d)}{=}\mathcal{N}(\vec{0},\sigma^2\cdot I_2)$, so that
\[
\frac{1}{n}\textnormal{Cov}\left( M_1([0,n[)M_1([n,2n[)\right)=\Expect{v(n)}\xrightarrow[n\to \infty]{}\Expect{Y_1Y_2}=0.
\]
\end{proof}

\section{Scaling limits of subpopulations}
\label{sec:fidi}

Recall that we write $M_k^{(n)} \equiv M_{k}([0,n[)$ for $k \geq 1$ for ease of notation. From what precedes, we expect that $(n^{-1} \cdot M_{\lfloor n t \rfloor }^{(n)} : t \geq 0)$  converges in distribution to a Feller diffusion process. We will actually prove  a stronger and more general result, Theorem \ref{thm:general-martingale-diffusion-cv-intro-intro}, that applies to our case. Recall the notation from Theorem \ref{thm:general-martingale-diffusion-cv-intro-intro}, in particular of the Markov martingales $(A^{(n)}_{k} : k \geq 0)$ with values in the integers and of the extinction times $\tau^{(n)}_{0}$.

First, we shall prove the scaling limit result \eqref{eq:joint-cv-pop-slice-ext-time} using standard stochastic analysis tools, in Sections~\ref{sec:feller-in-strip} and~\ref{sec:feller-cv-extinction-times}. This is then used in Section~\ref{sec:long-thin-branch-and-kolmo} to prove~\eqref{eq:survival-one-tree} which is more subtle and requires to understand, in the discrete setting, the mechanism leading to $\tn{1}{0}\geq n$.
Finally, in Section~\ref{sec:multiplefellers}, we go back to our forests with local catastrophes and prove a version of~\eqref{eq:joint-cv-pop-slice-ext-time} for tuples of population marginals $(M_{\lfloor nt \rfloor}^{(n)})$, as well as for the dual subpopulations.

\vspace{1em}

Before getting into the details of the proof of Theorem~\ref{thm:general-martingale-diffusion-cv-intro-intro}, let us recall an important result about the Feller diffusion $X$, that we will make use of many times in what follows. Since $X$ can equivalently be seen as a continuous-state branching process (CSBP) starting from $X_{0}=1$ and with branching mechanism $\psi(u)=(\sigma^2/2)u^2$, from classical properties of CSBPs (see for instance~\cite[Chapter 2]{legall-book-zurich}), the law of the extinction time $\theta_{0}$ is explicit:
\begin{eqnarray}
  \Prob{\theta_{0} \leq t}= \mathrm{e}^{-\frac{2}{\sigma^2 t}}. \label{eq:bound-extinction-time-Feller}
\end{eqnarray}

\subsection{Step 1: SDE approximation in a strip}
\label{sec:feller-in-strip}

To prove~\eqref{eq:joint-cv-pop-slice-ext-time}, we first prove a weaker result for the process $(A_k^{(n)})_{k\geq 0}$, up to the time it leaves the strip with horizontal borders at levels $n\epsilon$ and $n/\epsilon$ for fixed $\epsilon>0$ , see~\Cref{fig:cv-in-strip}.

We denote by $\tn{n}{I}$ the hitting time of any set $I\subset \R_+$ for $(A_k^{(n)})_k$, i.e. $\tn{n}{I}=\min\{k \geq 0 |A_k^{(n)}\in I\}$. In this section, for ease of notation, we write  $\tn{n,\epsilon}{}$ for the discrete hitting time $$\tn{n,\epsilon}{} = \tn{n}{]\epsilon n,\frac{n}{\epsilon}[^c}=\min\{k \geq 0 : A_k^{(n)}\in [0,\epsilon n]\cup[n/\epsilon, +\infty[\},$$ and likewise for hitting times for the Feller diffusion process $X$, that is $\tcm{\epsilon}{}=\tc{]\epsilon,\frac{1}{\epsilon}[^c}=\inf\{t \geq 0 : X_t\in [0,\epsilon ]\cup[1/\epsilon, +\infty[\}$.

\begin{figure}[htp]
  \centering
  \includegraphics[scale=1.1]{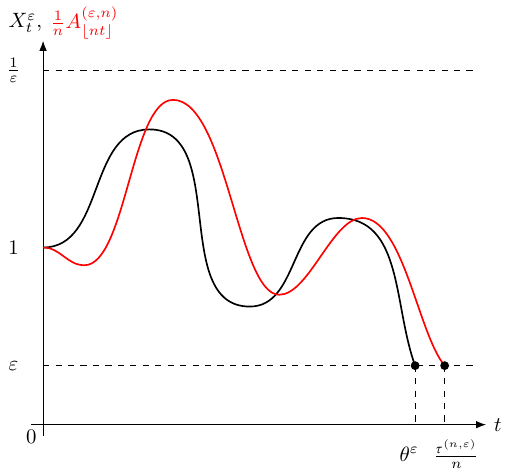}
  \caption{In~\Cref{prop:martingale-diffusion-cv-box}, we prove the convergence of $\frac{A_{\lfloor nt \rfloor}^{(n,\epsilon)}}{n}$, the rescaled discrete process up until it hits $[0,\epsilon]\cup[1/\epsilon, +\infty[$, to $(X_t^{\epsilon})$, the Feller diffusion process up until it hits $[0,\epsilon]\cup[1/\epsilon, +\infty[$.}
  \label{fig:cv-in-strip}
\end{figure}

\begin{prop}
Let $\big(\big(A_k^{(n)}\big)_k\big)_n$ be like in~\Cref{thm:general-martingale-diffusion-cv-intro-intro}. We denote by $\big(A_k^{(n,\epsilon)}\big)_k$ the process $A_k^{(n)}$ stopped at $\tn{n,\epsilon}{}$, that is  $A_k^{(n,\epsilon)}=A_{k\wedge\tn{n,\epsilon}{}}$. Likewise, let $(X_t)$ be as in~\Cref{thm:general-martingale-diffusion-cv-intro-intro}, and define $(X_t^{\epsilon})$ as $(X_t)$ stopped at $\tcm{\epsilon}{}$. Then we have the following convergence in distribution:
\[
\Bigg(\frac{A_{\lfloor nt \rfloor}^{(n,\epsilon)}}{n}\Bigg)_{t \geq 0} \xrightarrow[n \to \infty]{(d)} \big(X_t^{\epsilon}\big)_{t\geq 0}.
\]
\label{prop:martingale-diffusion-cv-box}
\end{prop}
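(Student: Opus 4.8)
The plan is to prove \Cref{prop:martingale-diffusion-cv-box} by the standard martingale-problem / generator route. First I would set up the rescaled process $Y^{(n)}_t := A^{(n,\epsilon)}_{\lfloor nt\rfloor}/n$, which takes values in $[0,1/\epsilon]$ up to the stopping time, and check tightness of the laws of $(Y^{(n)}_t)_{t\geq 0}$ in the Skorokhod space $D([0,\infty),\R_{\geq 0})$ via Aldous' criterion: since $A^{(n)}$ is a martingale, $(Y^{(n)}_t)$ is a bounded martingale on $[0,1/\epsilon]$, and its predictable quadratic variation increments over one discrete step are $n^{-2}\Expect{(\Delta A^{(n)}_1)^2 \mid A^{(n)}_\cdot}$; while in the strip the current value is comparable to $n$, so by a uniform version of the variance estimate \eqref{eq:cond-for-feller-cv1} (which should be upgraded to hold uniformly over starting values in $[\epsilon n, n/\epsilon]$ — this is where \Cref{prop:varianceonestep} and its uniformity come in) these increments are $O(n^{-1})$, giving the required control of jumps and of the quadratic variation to apply Aldous--Rebolledo. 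The jumps of $Y^{(n)}$ themselves vanish: a single step changes $A^{(n)}$ by $O(\sqrt n)$ with high probability (from the CLT plus second moment bound), so all subsequential limits are continuous.

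Next, for any subsequential limit $X$, I would identify it as the stopped Feller diffusion by showing it solves the corresponding stopped martingale problem: for $f \in C^2_b$, the process $f(Y^{(n)}_t) - \sum_{k < \lfloor nt\rfloor} \Expect{f(Y^{(n)}_{(k+1)/n}) - f(Y^{(n)}_{k/n}) \mid \mathcal F_{k/n}}$ is a martingale, and a Taylor expansion of the conditional increment gives, on $\{Y^{(n)}_{k/n} = a \in [\epsilon,1/\epsilon]\}$,
\[
\Expect{f(Y^{(n)}_{(k+1)/n}) - f(a)\mid \cdot} = \tfrac12 f''(a)\, n^{-2}\Expect{(\Delta A^{(n)}_1)^2 \mid \cdot} + \text{error},
\]
the first-order term dropping out by the martingale property of $A^{(n)}$. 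Using $n^{-2}\Expect{(\Delta A^{(n)}_1)^2\mid A^{(n)}_{k}=na} \to \sigma^2 a$ (uniform variance estimate) and bounding the Taylor remainder by a uniform integrability argument fed by the CLT and the third-moment control, the compensator converges to $\int_0^{t\wedge\theta^\epsilon} \tfrac12 \sigma^2 X_s f''(X_s)\,ds$, i.e. $X$ solves the martingale problem for the generator $\tfrac12\sigma^2 x \partial_x^2$ stopped on exiting $(\epsilon, 1/\epsilon)$. Since the Feller SDE \eqref{eq:def-Feller} has a unique weak solution and this uniqueness is preserved under stopping at the exit time of an open interval bounded away from $0$, the martingale problem is well-posed and the limit is exactly $(X^\epsilon_t)$, so the full sequence converges.

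The main obstacle I anticipate is the uniformity of the one-step variance and the control of the Taylor remainder: \Cref{prop:varianceonestep} and \Cref{prop:cltonestepsingle} are stated for the population started from a fixed $n$, but here I need $n^{-2}\Expect{(\Delta A^{(n)}_1)^2 \mid A^{(n)}_k = m}$ to behave like $\sigma^2 m/n$ \emph{uniformly} for $m$ in the strip $[\epsilon n, n/\epsilon]$, together with a uniform integrability statement for $(\Delta A^{(n)}_1/\sqrt m)^2$ that lets me pass the $f''$-term to the limit and discard the remainder. In our concrete setting this follows from the renewal structure (the estimate $\Var{M_1(m)} = m\sigma^2 + o(m)$ combined with the explicit error terms from \cite[Theorem 1.2.1]{cpr-book-2022} and the third-moment hypothesis \eqref{eq:hyp-third-moment}), but in the abstract framework of \Cref{thm:general-martingale-diffusion-cv} one should either add such a uniform hypothesis or derive it; I expect the cleanest route is to state the needed uniform variance and uniform integrability bounds as a lemma and verify them for $M_1$ using the compound-renewal estimates already invoked. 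The lower bound \eqref{eq:cond-for-feller-cv0} is what keeps the diffusion genuinely non-degenerate and prevents the limit from getting stuck before reaching $[0,\epsilon]$, so it enters in showing $\theta^\epsilon$ is a.s. finite and that the stopped martingale problem is well-posed.
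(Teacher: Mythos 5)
Your plan reproduces the paper's argument in spirit but does by hand what the paper delegates to a citation: the paper applies \cite[Corollary~2.2]{ispany-pap} (a packaging of \cite[Theorem~IX.4.21]{jacod-shiryaev}) to the increments $U^n_{k+1} = n^{-1}\,\Delta A^{(n,\epsilon)}_{k+1}$, checking conditions (i) martingale drift, (ii) conditional variance $\sim \sigma^2 \mathcal{U}^n_k / n$, and (iii) a Lindeberg-type ``no big jumps'' condition; your ``tightness via Aldous plus identification via the martingale problem'' route is exactly what that theorem encapsulates, so the two are essentially equivalent.

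The one place where you are less efficient than the paper is the ``main obstacle'' you anticipate. You propose either adding a uniform hypothesis to the abstract Theorem~\ref{thm:general-martingale-diffusion-cv} or proving a dedicated lemma for $M_1$ via the renewal structure, but no such addition is needed: the required uniformity over starting values in the strip follows \emph{automatically} from the homogeneity of the chain together with the hypotheses already in place. Since $A^{(n)}$ is a homogeneous Markov chain, conditionally on $A^{(n,\epsilon)}_k = n_0$ the increment has the law of $\Delta A^{(n_0,\epsilon)}_1$; in the strip one has $n_0 \in [\lceil\epsilon n\rceil,\lfloor n/\epsilon\rfloor]$, so $n_0\to\infty$ uniformly as $n\to\infty$, and the one-parameter asymptotics~\eqref{eq:cond-for-feller-cv1}--\eqref{eq:cond-for-feller-cv2} apply. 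The uniform integrability you need — to control the Taylor remainder in your formulation, or the big-jump condition (iii) in the paper's — then comes for free from convergence in distribution of $(\Delta A^{(n_0)}_1)^2/n_0$ plus convergence of the corresponding means, via the equality case of Fatou's lemma (\cite[Lemma~4.11]{kallenberg}, exactly as already used in Proposition~\ref{prop:bound on covariance}). So your ``obstacle'' is real insofar as one must notice it, but it does not require any new estimate; the abstract statement is complete as written.

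One further small remark: in the identification step you invoke ``third-moment control'' to discard the Taylor remainder. Be careful — Theorem~\ref{thm:general-martingale-diffusion-cv} does \emph{not} assume third moments of $\Delta A^{(n)}_1$ (those enter only in checking the hypotheses for the concrete model $M_1$). The abstract argument must therefore bound the remainder using only second-moment UI, which is why it is cleaner to run the Lindeberg condition (iii) directly, as the paper does, rather than a three-term Taylor expansion.
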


\begin{proof}
We apply to $(A_k^{(n,\epsilon)}/n)_k$ the result~\cite[Corollary 2.2]{ispany-pap} (in our case, since we have a Markov process, the result of Ispány and Pap essentially boils down to a rephrasing of~\cite[Theorem IX.4.21]{jacod-shiryaev}{}, with lighter notation). Following the notation of~\cite{ispany-pap}, we define
\begin{equation*}
U^n_{k+1}=\frac{1}{n}\Big(A^{(n,\epsilon)}_{k+1}-A^{(n,\epsilon)}_{k}\Big), \ \ U^n_0=1, \ \ \mathcal{U}^n_t=\sum_{k=0}^{\lfloor nt \rfloor} U^n_k=\frac{1}{n}A^{(n,\epsilon)}_{\lfloor nt \rfloor},
\end{equation*}
and we take for $\big( \mathcal{F}^n_k \big)_k$ the filtration associated to $\big( A^{(n,\epsilon)}_k \big)_k$. Note that, since $\big(A^{(n)}_k\big)$ is homogeneous, for any $k \in \Z_{\geq 0}$, for any $n_0 \in \{\lceil \epsilon n \rceil, \dots, \lfloor n/\epsilon \rfloor\}$, conditionally on $A^{(n, \epsilon)}_k=n_0$, the variable $A^{(n,\epsilon)}_{k+1}$ has the same law as $A^{(n_0,\epsilon)}_1$. Since $\big(\mathcal{U}^n_t \big)_t$ is a martingale, condition (i) of~\cite[Corollary 2.2]{ispany-pap} is automatically satisfied with $\beta=0$. Moreover, from~\eqref{eq:cond-for-feller-cv1}, we have
\begin{equation*}
\Var{U^n_{k+1} \lvert \mathcal{F}^n_k}\sim\frac{\sigma^2}{n}\mathcal{U}^n_{k},
\end{equation*}
so that condition (ii) of~\cite[Corollary 2.2]{ispany-pap} is satisfied with $\gamma(s,u)=\sigma\sqrt{u}$.

It remains to check condition (iii) of~\cite[Corollary 2.2]{ispany-pap} on big jumps:
\begin{equation}
\sum_{k=1}^{\lfloor nt \rfloor}\Expect{(U^n_k)^2\Indic{\lvert U^n_k \rvert > \kappa} \lvert \mathcal{F}^n_{k-1}}\xrightarrow[n\to \infty]{\mathbb{P}} 0 \quad \forall t,\kappa>0.
\label{eq:ispany-pap-big-jump-condition}
\end{equation}

Now recall from~\eqref{eq:cond-for-feller-cv2} that we have the convergence in distribution
\[
 \frac{\Big(\Delta A^{(n_0,\epsilon)}_1\Big)^2}{n_0}  \xrightarrow[n_0 \to \infty]{(d)}\sigma^2\cdot \chi^2_1,
\] and furthermore \eqref{eq:cond-for-feller-cv1} implies convergence of their means. Like in the proof of~\Cref{prop:bound on covariance}, it follows from \cite[Lemma 4.11]{kallenberg} that the variables in the last display are uniformly integrable and in particular that 
$$ \lim_{K \to \infty} \sup_{n_{0}\geq 1}   \mathbb{E}\left[ \frac{\Big(\Delta A^{(n_0,\epsilon)}_1\Big)^2}{n_0} \mathbf{1}_{(\Delta A^{(n_0,\epsilon)}_1)^{2} > K \cdot n_{0}}\right] =0,$$
so that in particular, for any fixed $\kappa >0$ we have 
$$ \lim_{n_{0} \to \infty} \mathbb{E}\left[ \frac{\Big(\Delta A^{(n_0,\epsilon)}_1\Big)^2}{n_0} \mathbf{1}_{\Delta A^{(n_0,\epsilon)}_1 > \kappa \cdot n_{0} }\right] =0,$$
and this easily implies \eqref{eq:ispany-pap-big-jump-condition}.
\end{proof}

\subsection{Step 2: control on hitting times}
\label{sec:feller-cv-extinction-times}

Let us denote $\widetilde{A}_t^{(n)}=A_{\lfloor nt \rfloor}^{(n)}/n$ for ease of notation. To get the whole convergence~\eqref{eq:joint-cv-pop-slice-ext-time} from~\Cref{prop:martingale-diffusion-cv-box}, we will use Skorokhod's representation theorem, and assume that we are working on a probability space where the convergence of~\Cref{prop:martingale-diffusion-cv-box} occurs almost surely. On this probability space, we have the following result.

\begin{prop}
Let $\mathcal{E}_{\epsilon,n}$ be the event where neither $\widetilde{A}^{(n)}$ nor $X$:
\begin{enumerate}[label=(\roman*)]
\item \label{item:going-high-before-low} Go above level $1/\epsilon$ before going below $\epsilon$;
\item \label{item:going-a-bit-up-after-low} Go back above level $\sqrt{\epsilon}$ after having dropped below level $\epsilon$;
\item \label{item:surviving-long-after-low} Survive for more than $\sqrt{\epsilon}$ unit of time after dropped below level $\epsilon$.
\end{enumerate}
Then, for any $\delta >0$, for $\epsilon$ small enough and $n$ large enough, we have $\Prob{\mathcal{E}_{\epsilon,n}}\geq 1 - \delta$.
\label{prop:good-behavior-outside-box}
\end{prop}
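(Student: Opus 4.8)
The plan is to handle the three bad events separately, obtaining for each a bound that is small when $\epsilon$ is small, uniformly in $n$ large, and then take a union bound. Throughout I work on the Skorokhod space where $\widetilde A^{(n)}\to X$ uniformly on compacts almost surely, and I use the explicit law of the extinction time $\theta^0$ of the Feller diffusion from~\eqref{eq:bound-extinction-time-Feller}, together with the strong Markov property and scaling of $X$. The key soft input is that the limiting statements for $X$ are explicit, so once I transfer each event to a statement about $X$ (up to an error vanishing with $n$ by Proposition~\ref{prop:martingale-diffusion-cv-box}), I am left with an elementary computation on the Feller diffusion.

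For item~\ref{item:going-high-before-low}: by the martingale (optional stopping) property, $X$ started from $1$ hits $1/\epsilon$ before $\epsilon$ with probability $(1-\epsilon)/(1/\epsilon-\epsilon)=\epsilon\cdot\frac{1-\epsilon}{1-\epsilon^2}\le \epsilon$; the same bound holds for $\widetilde A^{(n)}$ by the optional stopping theorem applied to the martingale $A^{(n)}$ (or can be read off from Proposition~\ref{prop:martingale-diffusion-cv-box} plus the previous bound, being careful about the boundary). For item~\ref{item:going-a-bit-up-after-low}: conditionally on having hit $\epsilon$, by the strong Markov property and the scaling property of $X$ (if $X$ solves $\dinf X=\sigma\sqrt X\dinf W$ then $c^{-1}X_{c\cdot}$ is again such a diffusion), the diffusion restarted from $\epsilon$ reaches $\sqrt\epsilon$ before $0$ with probability $\epsilon/\sqrt\epsilon=\sqrt\epsilon$; likewise for $\widetilde A^{(n)}$ using optional stopping from level $\lceil\epsilon n\rceil$. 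For item~\ref{item:surviving-long-after-low}: again restart from $\epsilon$; by scaling, the probability that the Feller diffusion started from $\epsilon$ survives longer than $\sqrt\epsilon$ equals the probability that a Feller diffusion started from $1$ survives longer than $\sqrt\epsilon/\epsilon=1/\sqrt\epsilon$, which by~\eqref{eq:bound-extinction-time-Feller} is $1-\mathrm e^{-2\sqrt\epsilon/\sigma^2}\le 2\sqrt\epsilon/\sigma^2$, vanishing as $\epsilon\to0$; the discrete analogue for $\widetilde A^{(n)}$ then follows from Proposition~\ref{prop:martingale-diffusion-cv-box} since the stopped process agrees with $X^\epsilon$ in the limit.

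Combining, on the complement of $\mathcal E_{\epsilon,n}$ one of at most three events of probability $O(\sqrt\epsilon)$ (uniformly in $n$ large, once the Skorokhod coupling error is absorbed) has occurred, so $\Prob{\mathcal E_{\epsilon,n}^c}\le C\sqrt\epsilon + o_{n\to\infty}(1)$; choosing $\epsilon$ small then $n$ large gives $\Prob{\mathcal E_{\epsilon,n}}\ge 1-\delta$. The main obstacle I anticipate is bookkeeping at the boundary of the strip: transferring statements like ``$\widetilde A^{(n)}$ goes back up to $\sqrt\epsilon$ after hitting $\epsilon$'' to the limit requires care because Proposition~\ref{prop:martingale-diffusion-cv-box} only controls the process \emph{stopped} upon hitting $[0,\epsilon]\cup[1/\epsilon,\infty)$, whereas items~\ref{item:going-a-bit-up-after-low}--\ref{item:surviving-long-after-low} concern the behaviour \emph{after} this exit. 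The clean way around this is to bound these post-exit events directly on the discrete side using the optional stopping theorem for the martingale $A^{(n)}$ and the discrete strong Markov property (the process restarted at level $m:=A^{(n,\epsilon)}_{\tau^{(n,\epsilon)}}\le \epsilon n$ is again one of our chains, started from $m$), which gives exactly the harmonic-function bounds $m/(\sqrt\epsilon n)\le\sqrt\epsilon$ for item~\ref{item:going-a-bit-up-after-low} and, for item~\ref{item:surviving-long-after-low}, a Markov-inequality bound on the extinction time using that the expected extinction time, or rather a suitable truncated version, scales like $n$; alternatively one applies Proposition~\ref{prop:martingale-diffusion-cv-box} afresh to the restarted chain. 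Thus no new limit theorem is needed, only a uniform (in $n$) control that is itself a consequence of the martingale property and the already-established convergence.
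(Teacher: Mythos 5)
Your overall strategy (treat the three bad events separately and union-bound; optional stopping for item~\ref{item:going-high-before-low}; a Doob-type maximal bound for item~\ref{item:going-a-bit-up-after-low}; extinction-time control for item~\ref{item:surviving-long-after-low}) matches the paper, and your computations for the Feller diffusion $X$ are correct, including the scaling identity $\theta^{(a)}\stackrel{(d)}{=}a\,\theta^{(1)}$ used in item~\ref{item:surviving-long-after-low}. You also correctly flag the real difficulty: Proposition~\ref{prop:martingale-diffusion-cv-box} controls only the \emph{stopped} process, so items~\ref{item:going-a-bit-up-after-low}--\ref{item:surviving-long-after-low} for $\widetilde A^{(n)}$ must be handled directly on the discrete side. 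Your fix for item~\ref{item:going-a-bit-up-after-low} (optional stopping/Doob from the exit level $m\le\epsilon n$, giving $m/(\sqrt\epsilon n)\le\sqrt\epsilon$) is exactly what the paper does. But your fix for item~\ref{item:surviving-long-after-low} on the discrete side does not close the gap. The Markov-inequality route is unavailable as stated: for a critical nonnegative martingale started from $m$, the mean extinction time is generically infinite (indeed $\Prob{\theta^{0}>t}\sim 2/(\sigma^{2}t)$ by~\eqref{eq:bound-extinction-time-Feller}, so even the limit has $\Expect{\theta^{0}}=\infty$), and you do not specify which ``truncated version'' would scale like $n$ nor why the scaling would be uniform in $n$. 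Your alternative --- ``apply Proposition~\ref{prop:martingale-diffusion-cv-box} afresh to the restarted chain'' --- only yields convergence of the in-strip stopped process; it says nothing about the extinction time, which is exactly what item~\ref{item:surviving-long-after-low} concerns. Invoking the joint convergence with extinction times (Theorem~\ref{thm:general-martingale-diffusion-cv}) would be circular, since Proposition~\ref{prop:good-behavior-outside-box} is an ingredient of that theorem.

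The paper closes this gap with an additional ingredient that your proposal is missing: a quantitative, $n$-uniform tail bound on absorption times of nonnegative supermartingales with polynomial negative drift (Bertoin--Curien--Riera), applied to $\big(A^{(n)}_k\big)^{1/2}$. The second-moment hypothesis~\eqref{eq:cond-for-feller-cv1} together with the non-degeneracy hypothesis~\eqref{eq:cond-for-feller-cv0} are used to show
\[
\Expect{\big(A_{1}^{(n)}\big)^{1/2}-n^{1/2}}\leq -\,c\,n^{-1/2}\quad\text{uniformly in }n\geq1,
\]
and the lemma (with $\alpha=1$) then gives $\Prob{\tau\geq x\,m\mid A_0=m}\leq C\,x^{-1/2}$ uniformly in $m$. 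Taking $m\leq\epsilon n$ and horizon $\sqrt\epsilon\, n$, so $x\geq\epsilon^{-1/2}$, yields a bound of order $\epsilon^{1/4}$ for item~\ref{item:surviving-long-after-low}, uniformly in $n$. This is the one step that genuinely goes beyond optional stopping and Doob, and it is the piece your proof plan needs to supply.
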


\begin{figure}[htp]
\centering
\includegraphics[scale=1.1]{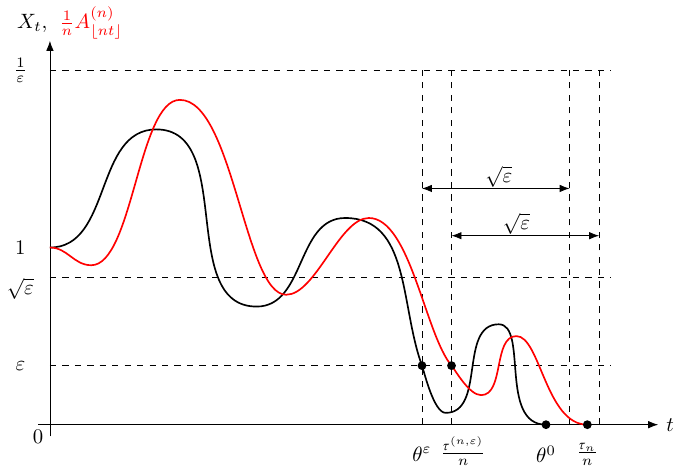}
\caption{A depiction of the event $\mathcal{E}_{\epsilon,n}$ of~\Cref{prop:good-behavior-outside-box}.}
\label{fig:full-cv-event}
\end{figure}

Before getting to the proof of~\Cref{prop:good-behavior-outside-box}, let us explain how it yields~the first part of \Cref{thm:general-martingale-diffusion-cv-intro-intro}. 

\begin{proof}[Proof of \eqref{eq:joint-cv-pop-slice-ext-time} in ~\Cref{thm:general-martingale-diffusion-cv-intro-intro}]  Fix $ \delta,\epsilon>0$. Suppose that $\widetilde{A}^{(n,\epsilon)} \to X^{\epsilon}$ for the topology of uniform convergence on every compact of $ \mathbb{R}_{+}$. Since the extinction time $\tc{0}$ of $X$ is almost surely finite  by \eqref{eq:bound-extinction-time-Feller}, we can choose $ \varepsilon>0$ to be small enough so that $ \mathbb{P}( \tc{0} \leq  \varepsilon^{-1}) \geq 1- \delta$. On this event intersected with the  event $\mathcal{E}_{\epsilon, n}$, we easily deduce that 

$$\tcm{\epsilon}{} \leq \liminf_{n \to \infty} \frac{\tn{n,\epsilon}{}}{n} \leq  \tc{0} \leq \limsup_{n \to \infty} \frac{\tn{n}{0}}{n}$$ 
which combined with 
$$|\tc{0} - \tcm{\epsilon}{}| \leq \sqrt{ \varepsilon} \quad \mbox{ and } \quad \left| \frac{\tn{n,\epsilon}{}}{n} - \frac{\tn{n}{0}}{n} \right| \leq \sqrt{ \varepsilon}$$ 
easily yields that $\limsup_{n \to \infty }|\tau_{0}^{(n)}/n - \theta_{0}| \leq 2 \sqrt{ \varepsilon}$. Similarly, the supremum norm between $\widetilde{A}_t^{(n)}$ and $X$ is upper bounded  by the supremum norm over the time interval $[0, \varepsilon^{-1}]$ plus $2 \sqrt{ \varepsilon}$. Up to decreasing $ \varepsilon>0$, all of this happens on an event of probability at least $ 1- 2 \delta$, \Cref{thm:general-martingale-diffusion-cv-intro-intro} follows. 
\end{proof}

Let us now prove the bounds claimed in~\Cref{prop:good-behavior-outside-box}. Let us start with points~\ref{item:going-high-before-low} and~\ref{item:going-a-bit-up-after-low} for $X$. Since $(X_t)$ is a martingale, and $\tcm{\epsilon}{}$ is an almost surely finite stopping time such that $X_{t\wedge \tcm{\epsilon}{}}\leq 1/\epsilon$ a.s., by the optional stopping theorem we have $\Expect{X_{\tcm{\epsilon}{}}}=1$. Moreover, we have 
\[
\Prob{\{X_{\tcm{\epsilon}{}}=\epsilon\}\cup\{X_{\tcm{\epsilon}{}}=1/\epsilon\}}=1,
\]
so that 
\[
\Prob{X_{\tcm{\epsilon}{}}=\epsilon}=\frac{1-\epsilon}{1-\epsilon^2}.
\]
Point~\ref{item:going-a-bit-up-after-low} follows similarly from Doob's (super)-martingale inequality, since for a nonnegative super-martingale $(S_{t} : t \geq 0)$ with continuous or discrete sample paths, we have
\[
\Prob{\max_{t \geq 0}S_t\geq  A}\leq  \frac{S_{0}}{A}.
\]

For $\widetilde{A}^{(n)}$, we can bound the probabilities of points~\ref{item:going-high-before-low} and~\ref{item:going-a-bit-up-after-low} from the bounds for $X$. Indeed, we can apply to $A^{(n)}$~\Cref{prop:martingale-diffusion-cv-box} with the threshold parameter $\epsilon/2$ (i.e., until the stopping times $\tn{n,\epsilon/2}{}, \tcm{\epsilon/2}{}$), which implies that the probabilities of the events~\ref{item:going-high-before-low} and~\ref{item:going-a-bit-up-after-low} for $\widetilde{A}^{(n)}$ converge to those for $X$.

For point~\ref{item:surviving-long-after-low}, we will use bounds on the extinction times of $\widetilde{A}^{(n)}$ and $X$. For $X$ these are explicitly given in \eqref{eq:bound-extinction-time-Feller}. For $\widetilde{A}^{(n)}$ we will use general bounds for extinction times of super-martingales with polynomial drift proved in  Bertoin, Curien \& Riera \cite[Appendix]{BCR} which we restate here for the reader's convenience:
\begin{lem}[\cite{BCR}]
Let $(S_k : k \geq 0)$ be a $(\mathcal{F}_k : k \geq 0)$-supermartingale with values in $\R_+$ which is absorbed at 0. We suppose that there exist two constants $c>0$ and $\alpha \in \R$ such that we have
\[
\Expect{S_{k+1} - S_k \, | \, \mathcal{F}_k}\leq - c\ S_k^{-\alpha}.
\]
If $\tau =\inf\{k\geq0 : X_k=0\}$ is the absorption time of the chain, then we have for $x >1$
\[
\sup_{x >1}\sup_{n \geq 1}\Prob{\tau \geq xn^{1+\alpha}\, \big|\, S_0=n}\cdot x^{\frac{1}{1+\alpha}} <\infty.
\]
\end{lem}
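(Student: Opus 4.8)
The plan is to run an explicit time-dependent Lyapunov argument, the Lyapunov function being a discrete envelope of the deterministic descent $\dot u=-c\,u^{-\alpha}$; only the relevant range $\alpha>-1$ needs treatment.

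The first thing I would observe is that the hypothesis already removes the sole delicate regime, very small positive states: since $S_k\geq0$, the drift bound forces $S_k-cS_k^{-\alpha}\geq0$ whenever $S_k>0$, so $S_k\in\{0\}\cup[s_0,\infty)$ almost surely, with $s_0:=c^{1/(1+\alpha)}$; in particular $\{\tau>t\}=\{S_t>0\}=\{S_t\geq s_0\}$ for every integer $t\geq0$. Then, for a fixed integer $t\geq1$, I would introduce the non-increasing sequence
\[
u(k):=\bigl(c+c'(1+\alpha)(t-k)\bigr)^{1/(1+\alpha)},\qquad 0\leq k\leq t,
\]
where $c'\in(0,c]$ is a constant depending only on $c$ and $\alpha$, chosen small enough that $u(k)-u(k+1)\leq c\,u(k)^{-\alpha}$ for every $k<t$. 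That such a $c'$ exists is a routine tangent-line computation for $x\mapsto x^{1+\alpha}$: for $\alpha\in(-1,0)$ the map is concave and $c'=c$ works directly, whereas for $\alpha\geq0$ its convexity costs only a bounded factor, absorbed by taking $c'$ small and using $u(k)^{1+\alpha}\geq c$. Note that $u(t)=s_0$ and $u(0)\geq(c'(1+\alpha))^{1/(1+\alpha)}\,t^{1/(1+\alpha)}$. Finally I would set $g(s,k):=\min\bigl(1,s/u(k)\bigr)$, so that $g(0,\cdot)=0$, $0\leq g\leq1$, and $g(S_t,t)=\Indic{S_t>0}$ by the observation above.

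The heart of the matter is to check that $\bigl(g(S_{k\wedge\tau},k\wedge\tau)\bigr)_{0\leq k\leq t}$ is a bounded supermartingale. This is immediate on $\{S_k=0\}$ and on $\{S_k\geq u(k)\}$; on the remaining event $\{s_0\leq S_k<u(k)\}$ I would bound $g(S_{k+1},k+1)\leq S_{k+1}/u(k+1)$, take the conditional expectation, insert $\Expect{S_{k+1}-S_k\mid\mathcal F_k}\leq -cS_k^{-\alpha}$, and note that the target inequality $\Expect{g(S_{k+1},k+1)\mid\mathcal F_k}\leq S_k/u(k)$ rearranges exactly to $u(k)-u(k+1)\leq c\,u(k)\,S_k^{-1-\alpha}$, which holds since $S_k<u(k)$ and $u(k)-u(k+1)\leq c\,u(k)^{-\alpha}$. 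Taking expectations then gives $g(n,0)\geq\Expect{g(S_{t\wedge\tau},t\wedge\tau)\mid S_0=n}$, and since $g(0,\cdot)=0$ while $g(S_t,t)=1$ on $\{\tau>t\}$ (as $S_t\geq s_0=u(t)$ there), the right-hand side is exactly $\Prob{\tau>t\mid S_0=n}$. Therefore
\[
\Prob{\tau>t\mid S_0=n}\leq\min\bigl(1,n/u(0)\bigr)\leq\bigl(c'(1+\alpha)\bigr)^{-1/(1+\alpha)}\,n\,t^{-1/(1+\alpha)}.
\]
Taking $t=\lceil xn^{1+\alpha}\rceil-1$, so that $\{\tau\geq xn^{1+\alpha}\}=\{\tau>t\}$ — and handling the bounded range $xn^{1+\alpha}<2$ trivially via $\Prob{\cdot}\leq1$ — this yields $\Prob{\tau\geq xn^{1+\alpha}\mid S_0=n}\leq C(c,\alpha)\,x^{-1/(1+\alpha)}$, which is the assertion.

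I expect the one genuine difficulty to be the construction of $u$: one needs \emph{simultaneously} the step bound $u(k)-u(k+1)\leq c\,u(k)^{-\alpha}$ and $u(0)\asymp t^{1/(1+\alpha)}$, and for $\alpha\geq0$ the convexity of $x\mapsto x^{1+\alpha}$ works against the first, forcing the envelope to be run slightly sub-critically (the factor $c'<c$); this is exactly where the automatic lower bound $u(k)^{1+\alpha}\geq c$ on nonzero states is used.
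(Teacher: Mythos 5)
The paper states this lemma without proof, citing it from the (unpublished) reference \cite{BCRwip}; there is therefore no internal argument to compare your proposal against, and the review can only assess it on its own merits. It holds up, on the relevant range $\alpha>-1$. The preliminary observation that $0\leq\Expect{S_{k+1}\mid\mathcal F_k}\leq S_k-cS_k^{-\alpha}$ on $\{S_k>0\}$ forces $S_k\in\{0\}\cup[c^{1/(1+\alpha)},\infty)$ a.s.\ is correct and is what makes $g(S_t,t)=\Indic{S_t>0}$; the existence of a suitable $c'$ is fine (for $\alpha\in(-1,0)$, concavity of $x\mapsto x^{1+\alpha}$ gives $u(k)-u(k+1)\leq c'u(k)^{-\alpha}$ with $c'=c$; for $\alpha\geq0$, concavity of $y\mapsto y^{1/(1+\alpha)}$ gives $u(k)-u(k+1)\leq c'u(k+1)^{-\alpha}$, and $u(k+1)^{1+\alpha}\geq c$ then bounds $u(k+1)^{-\alpha}$ by $\bigl(1+c'(1+\alpha)/c\bigr)^{\alpha/(1+\alpha)}u(k)^{-\alpha}$, so a small enough $c'$ works); and the supermartingale check on $\{s_0\leq S_k<u(k)\}$ indeed reduces, after clearing denominators, to $u(k)-u(k+1)\leq c\,u(k)\,S_k^{-1-\alpha}$, which follows from the envelope inequality since $S_k<u(k)$ and $1+\alpha>0$. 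The reduction $t=\lceil xn^{1+\alpha}\rceil-1$ and the disposal of the range $1<xn^{1+\alpha}<2$ are routine. Worth emphasizing: the truncated \emph{linear} Lyapunov function $g(s,k)=\min(1,s/u(k))$ is the crucial choice here, because the hypothesis is a bare drift bound with no control on second moments of the increments. The more tempting Lyapunov candidate $S_k^{1+\alpha}+\mathrm{const}\cdot k$ is convex in $S_k$ when $\alpha>0$ and would require such control, whereas $s\mapsto\min(1,s/u(k))$ is concave, so conditional Jensen together with the supermartingale property of $S$ suffices. The only place I would ask you to be more explicit is the existence of $c'$ for $\alpha\geq0$, which you wave at; the computation above supplies it.
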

The desired estimate will follow from the above lemma applied to $(A_k^{(n)})^{1/2}$ (which obviously shares the same extinction time as $(\widetilde{A}_k^{(n)})_k$) and with $\alpha=1$, thanks to the following:
\begin{lem} The sequence $((A_k^{(n)})^{1/2} : k\geq 0)$ is a super-martingale, and there exists $c>0$ such that for all $n \geq 1$ we have 
\begin{equation}
\Expect{(A_{1}^{(n)})^{1/2}-n^{1/2}}\leq \frac{-c}{n^{1/2}}.
\label{eq:bound-on-increment-for-asp-variant}
\end{equation}
\end{lem}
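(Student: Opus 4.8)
The plan is to establish the two assertions — the super-martingale property of $((A_k^{(n)})^{1/2})_k$ and the quantitative drift bound \eqref{eq:bound-on-increment-for-asp-variant} — by exploiting concavity of $x \mapsto \sqrt x$ together with the martingale property of $A_k^{(n)}$ and the variance lower bound \eqref{eq:cond-for-feller-cv0}. Since $(A_k^{(n)})_k$ is a martingale and $\sqrt{\cdot}$ is concave, Jensen's inequality applied conditionally on $\mathcal{F}_k$ gives $\Expect{(A_{k+1}^{(n)})^{1/2} \mid \mathcal{F}_k} \leq (\Expect{A_{k+1}^{(n)} \mid \mathcal{F}_k})^{1/2} = (A_k^{(n)})^{1/2}$, which is exactly the super-martingale statement; note one must be slightly careful that once the chain hits $0$ it stays there, so the inequality is in fact an equality on $\{A_k^{(n)}=0\}$ and the process is indeed absorbed at $0$ as required by the lemma of \cite{BCRwip}.

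For the quantitative bound, the key point is that a martingale with a uniformly bounded-below conditional variance cannot have its square root drift be merely $o(n^{-1/2})$: concavity must cost something proportional to the fluctuation size. Concretely, I would write $A_1^{(n)} = n + \Delta$ with $\Expect{\Delta}=0$ and $\Expect{\Delta^2}\geq s$, and use a second-order (Taylor-type) expansion of $\sqrt{n+\Delta}$ around $n$. The first-order term vanishes in expectation, and the second-order term contributes approximately $-\tfrac{1}{8} n^{-3/2}\Expect{\Delta^2} \leq -\tfrac{s}{8} n^{-3/2}$. This is too small by a factor of $n$ compared to what \eqref{eq:bound-on-increment-for-asp-variant} demands, so a naive global Taylor expansion is not enough: the point is that for the bound $-c/n^{1/2}$ one needs the variance to be of order $n^2$, not order $n$ — wait, rather, one should reexamine. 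Actually $\Expect{\Delta^2} \sim \sigma^2 n$ by \eqref{eq:cond-for-feller-cv1}, which gives a drift of order $-\sigma^2 n \cdot n^{-3/2} = -\sigma^2 n^{-1/2}$, precisely the claimed order. So the correct argument uses \emph{both} \eqref{eq:cond-for-feller-cv1} (to get the right order) and \eqref{eq:cond-for-feller-cv0} (to get a bound uniform over small $n$, where the asymptotic $\sim$ is not yet effective). I would split into $n$ large, where \eqref{eq:cond-for-feller-cv1} gives $\Expect{\Delta^2}\geq \tfrac12 \sigma^2 n$ and a careful control of the error term in the expansion of $\sqrt{n+\Delta}$ (using $\sqrt{n+x} \leq \sqrt n + \tfrac{x}{2\sqrt n} - \tfrac{x^2}{8 n^{3/2}} + \tfrac{x^3 \text{-correction}}{\cdots}$ only on the event $\{|\Delta| \leq n/2\}$, say, and crudely bounding the contribution of $\{|\Delta|>n/2\}$ using that $\sqrt{n+\Delta}-\sqrt n \leq \sqrt{\Delta} \leq$ something controlled by the uniform integrability of $\Delta^2/n$ from \eqref{eq:cond-for-feller-cv2}); and $n$ in a finite range, where $\Expect{(A_1^{(n)})^{1/2}} < n^{1/2}$ strictly by strict concavity and non-degeneracy of $\Delta$ (which holds because $\Expect{\Delta^2}\geq s>0$), so a uniform $c$ over that finite range exists by taking a minimum.

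The main obstacle is the uniformity of the constant $c$ across \emph{all} $n \geq 1$, reconciling the two regimes: the asymptotic regime needs the error terms in the Taylor expansion of $x\mapsto \sqrt x$ to be genuinely negligible against $n^{-1/2}$, which requires harnessing the uniform integrability of $(\Delta A_1^{(n)})^2/n$ (a consequence of \eqref{eq:cond-for-feller-cv1}–\eqref{eq:cond-for-feller-cv2} via the equality case of Fatou's lemma, exactly as used in the proof of Proposition~\ref{prop:bound on covariance}) to truncate the large-jump contribution; and the small-$n$ regime needs strict concavity plus \eqref{eq:cond-for-feller-cv0} to rule out a degenerate increment. Once both the super-martingale property and \eqref{eq:bound-on-increment-for-asp-variant} are in hand, one applies the lemma of \cite{BCRwip} to $S_k = (A_k^{(n)})^{1/2}$ with $\alpha = 1$ and $S_0 = n^{1/2}$ (so the relevant scale is $(n^{1/2})^{1+\alpha} = n$), obtaining $\Prob{\tau_n \geq xn \mid A_0^{(n)}=n} \leq C x^{-1/2}$ uniformly, which is precisely the tail bound on $\tau_n/n$ needed to control point~\ref{item:surviving-long-after-low} of Proposition~\ref{prop:good-behavior-outside-box} (applied after the chain has descended to level $\epsilon n$, where the relevant scale becomes $\epsilon n$ and the probability of surviving an additional $\sqrt\epsilon \cdot n$ steps is at most $C(\sqrt\epsilon)^{-1/2}\epsilon^{1/2} = C\epsilon^{1/4} \to 0$).
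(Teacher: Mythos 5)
Your proposal follows essentially the same route as the paper's proof: conditional Jensen's inequality gives the super-martingale property, and for the drift bound you expand $\sqrt{n+\Delta}$ to second order, kill the first-order term with the martingale property, extract the $-\sigma^2/(8\sqrt n)$ asymptotics from \eqref{eq:cond-for-feller-cv1}, and handle small $n$ separately via strict concavity together with \eqref{eq:cond-for-feller-cv0}, finally taking a minimum over the finite range. The one place you are actually more careful than the paper is the error term: the paper simply writes $\Expect{\smallo{(\Delta A_1^{(n)})^2/n^{3/2}}}$ and asserts it is negligible, whereas you correctly observe that the pointwise Taylor expansion of $\sqrt{1+x}$ is only locally valid and that one must truncate on $\{|\Delta| \leq n/2\}$ (say) and invoke the uniform integrability of $(\Delta A_1^{(n)})^2/n$ — coming from \eqref{eq:cond-for-feller-cv1}–\eqref{eq:cond-for-feller-cv2} via the equality case of Fatou — to control the tail; this is the right way to make the paper's step rigorous.
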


\begin{proof}
The fact that it is a super-martingale follows directly from the conditional Jensen's inequality. Now, to obtain~\eqref{eq:bound-on-increment-for-asp-variant}, observe that
\[
\big(A_{1}^{(n)}\big)^{1/2}=\big(n+\Delta A_{1}^{(n)}\big)^{1/2}=n^{1/2}\Bigg[1+\frac{1}{2}\frac{\Delta A_{1}^{(n)}}{n}-\frac{1}{8}\Big(\frac{\Delta A_{1}^{(n)}}{n}\Big)^2+\smallo{\Big(\frac{\Delta A_{1}^{(n)}}{n}\Big)^2}\Bigg],
\]
so that
\[
\Expect{(A_{1}^{(n)})^{1/2} - n^{1/2}}=\frac{1}{2}\Expect{\frac{\Delta A_{1}^{(n)}}{n^{1/2}}} -\frac{1}{8}\Expect{\frac{(\Delta A_{1}^{(n)})^2}{n^{3/2}}}+\Expect{\smallo{\frac{(\Delta A_{1}^{(n)})^2}{n^{3/2}}}}.
\]
The first term in that sum is zero since $(A_k^{(n)})_k$ is a martingale, and, by~\eqref{eq:cond-for-feller-cv1}, the second behaves, as $n\to\infty$, like
\[
 -\frac{1}{8}\frac{\Expect{(\Delta A_{1}^{(n)})^2}}{n^{3/2}}\sim -\frac{1}{8}\frac{\sigma^2}{n^{1/2}}.
\]
Thus, there exists some integer $M>0$ such that, for $n\geq M$, we have
\[
\Expect{(A_{1}^{(n)})^{1/2} - n^{1/2}}\leq\frac{-\sigma^2}{16n^{1/2}}.
\]
To handle smaller values of $n$, recall that $(A_k^{(n)})^{1/2}$ is a supermartingale:
\[
\Expect{\big(A_{1}^{(n)}\big)^{1/2}}< \Expect{A_{1}^{(n)}}^{1/2}=n^{1/2},
\]
and  this inequality is strict since, by~\eqref{eq:cond-for-feller-cv0}, the martingale is not degenerate. Thus we can indeed find $c >0$ small enough so that the statement of the lemma holds for all $n \geq 1$.
\end{proof}

\subsection{Estimate on extinction time while staying small  and proof of the Kolmogorov-type estimate}
\label{sec:long-thin-branch-and-kolmo}

In order to prove~\eqref{eq:survival-one-tree}, we will use an important corollary of~\eqref{eq:joint-cv-pop-slice-ext-time}, that states that it is unlikely for the process $(A)$ to stay small but positive for a long time. We use the same notation as in \cref{thm:general-martingale-diffusion-cv-intro-intro}:

\def\longthinbranch{\tikz[baseline=-0.2ex,scale=0.75]{
\path (0,-1.5) node (A00) {}
      (1,-1.5) node (A01) {}
      (0,-0.75) node (A10) {}
      (1,-0.75) node (A11) {}
      (0,1.5) node (A20) {}
      (1,1.5) node (A21) {}
      (0,2.25) node (A30) {}
      (1,2.25) node (A31) {};

\filldraw[fill=blue!20] {(A00.center)  -- (A01.center)} 
decorate [decoration={snake,segment length=4pt, amplitude=2pt}] {-- (A11.center)} 
{-- (A10.center)}
decorate [decoration={snake,segment length=4pt, amplitude=2pt}] {-- cycle};
\filldraw[fill=blue!20] {(A20.center)  -- (A21.center)} 
decorate [decoration={snake,segment length=4pt, amplitude=2pt}] {-- (A31.center)} 
{-- (A30.center)}
decorate [decoration={snake,segment length=4pt, amplitude=2pt}] {-- cycle};
\draw[ultra thick] (A00.center) -- (A01.center);
\draw[ultra thick] (A10.center) -- (A11.center);
\draw[ultra thick] (A20.center) -- (A21.center);
\draw[ultra thick] (A30.center) -- (A31.center);
\draw[thick, loosely dotted] (0.5,-0.5) -- (0.5,1.25);

\node at (-0.9,-1.2) {$n$};
\node at (-0.9,1.9) {$n$};
\node at (1.7,0) {$K$};
\node at (0.5,-2) {$\leq n$};
\draw[thick,<->] (1.25,-1.5) -- (1.25,2.25);
\draw[thick,<->] (-0.25,-1.5) -- (-0.25,-0.75);
\draw[thick,<->] (-0.25,1.5) -- (-0.25,2.25);
\draw[thick,<->] (0,-1.7) -- (1,-1.7);
}
}

\begin{cor} There exists a constant $c>0$, such that, for any $K \geq n\geq x$ we have the following bound on the probability that the martingale $A^{(x)}_{k}$ stays positive until time $K$ and stays smaller that $n$ every multiple of $n$ time steps:
  \begin{eqnarray*}
\Prob{\left\{ \begin{array}{l} A^{(x)}_{k\cdot n}\leq n  \\  \forall \, 1 \leq k \leq \frac{K}{n}  \end{array}\right\}\cap \Big\{A^{(x)}_{K}>0 \Big\}}= \P\left(\longthinbranch\right) \leq   \mathrm{e}^{-c K/n}.
  \end{eqnarray*}
\label{lem:bound proba long thin pop}
\end{cor}

\begin{proof} By~\Cref{thm:general-martingale-diffusion-cv-intro-intro}, after evolving during $n$ units of time, the probability of not being extinct starting from an initial value not larger than $n$ is bounded from above by some constant $ \Lambda < 1$ independent of $n \geq 1$. Now, since $(A^{(x)}_{k})_{k}$ is a homogeneous Markov chain, we can combine the above bound with the Markov property  to get that the probability of the event in the corollary is bounded from above by $\Lambda ^{\lfloor K/n \rfloor}$, and this proves the corollary. 
\end{proof} 

We can now prove our Kolmogorov-type estimate.

\begin{proof}[Proof of~\eqref{eq:survival-one-tree} in \Cref{thm:general-martingale-diffusion-cv-intro-intro}] The idea of the proof is the following: We will show that the most likely scenario for the martingale to survive up to time $n$ is to first reach the level $ \varepsilon n$, where $ \varepsilon>0$ is small but fixed. It does so with probability of order $1/( \varepsilon n)$, within  roughly $ \varepsilon n$ units of time and is then of order $ \varepsilon n$. We can then apply the convergence of the hitting times in Theorem \ref{thm:general-martingale-diffusion-cv-intro-intro} to deduce that the probability to stay positive for the $\approx n$ unit of times remaining is of order $  \varepsilon \cdot 2/\sigma^{2}$. This gives a probability $1/( \varepsilon n) \cdot  \varepsilon \cdot 2/\sigma^{2} = \frac{2}{ \sigma^{2}n}$ as desired.

Recall that we denote by $\tn{n}{I}$ the hitting time of any set $I\subset \R_+$ for $(A_k^{(n)})_k$, i.e. 
\[
\tn{n}{I}=\min\{k \geq 0 |A_k^{(n)}\in I\}.
\]
For ease of notation, when $I=[x,+\infty [$ we write $\tau_{\geq x}$ for the hitting time $\tn{1}{[x,+\infty [}=\min\{k \, : \, A^{(1)}_{k}\geq x\}$ and $A$ for the process $A^{{(1)}}$. Notice that by Doob's supermartingale inequality we have 
 \begin{eqnarray} \label{eq:doobsuper} \mathbb{P}( \tau^{(k)}_{ \geq x} < \infty) \leq \frac{k}{x}.  \end{eqnarray}
 The following lemma shows that in a sense this inequality is saturated for $k=1$ and large $x$:

\begin{lem}[On $\tau_{\geq \varepsilon n}$] There exist $c,C>0$ and  a function $\delta(n)$ tending to $0$ as $n\to \infty$, such that for any fixed $ \varepsilon \in (0,1/2)$ and $\zeta >0$ we have as $n \to \infty$
\begin{enumerate}
\item for all $j \geq 1$ we have $ \Prob{ j \varepsilon n \leq \tau_{\geq \varepsilon n} < \infty} \leq \frac{C \mathrm{e}^{- c j} }{ \varepsilon n}$,
\item $ \Prob{A_{\tau_{\geq \varepsilon n}} \geq ( \varepsilon + \zeta) n  \mbox{ and }  \tau_{\geq \varepsilon n} < \infty} =\smallo{\delta(\frac{n\zeta^2}{\epsilon})}$,
\item $ \mathbb{P}( \tau_{\geq \varepsilon n} < \infty) \sim \frac{1}{ \varepsilon n}$ as $n \to \infty$.
\end{enumerate}
\end{lem}

\begin{proof} Fix $ \varepsilon \in (0,1/2)$.
  
\textbf{(1) $\tau_{\geq \varepsilon n} \approx \varepsilon n$ when finite.} We split the event according to whether or not the process has reached above level $n/2$ in the first $(j/2) \varepsilon n$ units of time. Conditioning on the value of the chain at time $jn/2$ and using Corollary~\ref{lem:bound proba long thin pop}, we deduce that up to changing the constant $c>0$
\begin{eqnarray*} \Prob{\tau_{\geq \varepsilon n} \geq j n \mbox{ and } \tau_{0}\geq jn} &\underset{ \mathrm{Cor.} \ref{lem:bound proba long thin pop}}{\leq}& \Prob{\tau_{\geq \varepsilon jn/2} \geq jn/2 \mbox{ and } \tau_{0}\geq jn/2} \cdot  \mathrm{e}^{{-cj/2}}+ \Prob{\tau_{\geq \varepsilon n/2} \leq jn/2} \cdot \mathrm{e}^{{-cj/2}}\\
& {\leq}& \mathrm{e}^{{-cj/2}} \left(\Prob{\tau_{\geq \varepsilon jn/2} \geq jn/2 \mbox{ and } \tau_{0}\geq jn/2} + \Prob{\tau_{\geq \varepsilon n/2} < \infty} \right)\\
& \underset{\eqref{eq:doobsuper}}{\leq}& \mathrm{e}^{{-cj/2}}\left(\Prob{\tau_{\geq \varepsilon jn/2} \geq jn/2 \mbox{ and } \tau_{0}\geq jn/2} + \frac{2}{ \varepsilon n} \right).\end{eqnarray*}
Iterating this inequality for $n, n/2, n/4,...$,  we deduce  that $$\Prob{\tau_{\geq \varepsilon n} \geq jn \mbox{ and } \tau_{0}\geq jn} \leq  \frac{C \mathrm{e}^{- c j} }{ \varepsilon n},$$ for some $C,c>0$ as desired.

\textbf{(2) $A_{\tau_{\geq \varepsilon n}} = \varepsilon n + o_{\mathbb{P}}( n)$ when $\tau_{\geq \varepsilon n}$ is finite.} We now want to bound the probability that the martingale reaches $[ \varepsilon n,+\infty[$ by jumping to a value higher or equal to $( \varepsilon + \zeta)n$ for some $ \zeta >0$. That is we estimate
\[
\Prob{A_{\tau_{\geq \epsilon n}}\geq ( \varepsilon + \zeta)n},
\]
with the convention that if $\tau_{\geq \varepsilon n}= \infty$ then $A_{\tau_{\geq \epsilon n}}=0$. To do so, we decompose this event according to the highest dyadic level $\epsilon n/2^k$ reached before crossing level $ \varepsilon n$: for $k \geq 1$ consider the event 
$$ \Prob{A_{\tau_{\geq \epsilon n}}\geq (\epsilon + \zeta) n \mbox{ and } \sup_{x< \tau_{\geq \epsilon n}}A_x \in  \left[\frac{\epsilon n}{2^{k}} , \frac{\epsilon n}{2^{k-1}} \right[ }.$$
Applying the Markov property at the first time when the process belongs to $[\frac{\epsilon n}{2^{k}}, \frac{\epsilon n}{2^{k-1}}[$, by Doob's inequality \eqref{eq:doobsuper}, this probability is upper bounded by 
$$  \frac{2^{k}}{ \varepsilon n} \cdot \sup_{\frac{\epsilon n}{2^{k}} \leq x < \frac{\epsilon n}{2^{k-1}}} \Prob{A^{(x)}_{\tau^{(x)}_{\geq \epsilon n}}\geq (\epsilon + \zeta) n \mbox{ and } \sup_{y< \tau^{(x)}_{\geq \varepsilon n}}A^{(x)}_{y} \leq  \frac{ \varepsilon n}{2^{{k-1}}}}.$$
We now further split according to the scale of $\tau^{(x)}_{\geq \varepsilon n}$ in the second probability, and estimate for $j \geq 0$
 \begin{eqnarray} \label{eq:borne1} \sup_{\frac{\epsilon n}{2^{k}} \leq x < \frac{\epsilon n}{2^{k-1}}}\Prob{A^{(x)}_{\tau^{(x)}_{\geq \epsilon n}}\geq (\epsilon + \zeta) n \mbox{ and } \sup_{y< \tau^{(x)}_{\geq \varepsilon n}}A^{(x)}_y \leq  \frac{ \varepsilon n}{2^{{k-1}}} \mbox{ and }  \tau^{(x)}_{\geq \varepsilon n} \in [j2^{{-k}}\varepsilon n, (j+1) 2^{{-k}}\varepsilon n[}.  \end{eqnarray}
 On this event, the process first has to stay in the strip $[0, 2^{-(k-1)}\varepsilon n]$ for $j 2^{-k}\varepsilon n$ units of time, which by Corollary~\ref{lem:bound proba long thin pop} costs $ \mathrm{e}^{-c j }$, and then perform a jump of size at least $\zeta  n$ somewhere in the $ 2^{-k}\varepsilon n$ units of time in the interval $[j 2^{-k}\varepsilon n, (j+1) 2^{-k}\varepsilon n]$. It is thus bounded above by 
$$ \eqref{eq:borne1} \leq \mathrm{e}^{-c j }  \cdot 2^{-k}\varepsilon n \cdot \sup_{\frac{\epsilon n}{2^{k}} \leq x < \frac{\epsilon n}{2^{k-1}}} \Prob{\Delta A^{(x)}_{1} \geq  \zeta  n}.$$
Similarly to the proof of Proposition~\ref{prop:bound on covariance}, the hypothesis made on $A$ shows that $(\Delta A^{{(x)}}_{1})^{2} / x$ is uniformly integrable as $x \to \infty$, in particular we have 
$$ \mathbb{P}( \Delta A^{{(x)}}_{1} \geq k) \leq \frac{x}{k^{2}}\cdot \delta (k^{2}/x),$$ where $\delta$ is a bounded function that tends to $0$ at $\infty$.  Up to multiplying $\delta$ by some constant, we then have 
$$\eqref{eq:borne1} \leq 	\mathrm{e}^{-c j }  \cdot 2^{-k}\varepsilon n \cdot \frac{2^{-(k-1)} \varepsilon n}{( \zeta n)^{2}} \cdot \delta ( n \zeta^2  2^{k}/ \varepsilon).$$
Since $\delta(x)$ is bounded and $\delta(x)\xrightarrow[x \to \infty]{}0$, summing the previous bound over $j, k\geq0$, we get, for any fixed $\epsilon, \zeta, n$,
\begin{equation}
\Prob{A_{\tau_{\geq \epsilon n}}\geq (\epsilon + \zeta) n}\leq \frac{\epsilon}{\zeta^2n}\widetilde{\delta}\left(\frac{\zeta^2n}{\epsilon}\right),
\label{eq:expecsummable}
\end{equation}
where $\widetilde{\delta}$ is also a bounded function going to 0 at infinity.
  
  \textbf{ (3) Saturating Doob's inequality.} For ease of notation, let us write $\tau$ for $\tau_{0} \wedge \tau_{\geq \varepsilon n}$. The upper bound is given by \eqref{eq:doobsuper}. For the lower bound, notice that as long as $k < \tau$ by our assumptions we have
\[
\Expect{A_{k+1}-A_k\, | \, \mathcal{F}_k}^2\leq \Expect{(A_{k+1}-A_k)^2\, | \, \mathcal{F}_k}\leq C A_k\leq C\epsilon n.
\]
and, on the other hand, recall from Corollary  \ref{lem:bound proba long thin pop} that $\tau$ is of finite expectation (which however may diverge with $n$). It follows that the martingale $A$ stopped at $\tau$ is bounded in $L^{2}$ and by the optional stopping theorem we have 
\begin{align*}
1=\Expect{A_{\tau}}  =\Expect{A_{\tau }\Indic{A_{\tau}\geq (\epsilon  + \zeta)n}}+\Expect{A_{\tau}\Indic{A_{\tau}\in [\epsilon n, (\epsilon +\zeta) n)}}
\end{align*}
The second term in the right-hand side is bounded above by  $ (\varepsilon+ \zeta) n\mathbb{P}( \tau = \tau_{\geq \varepsilon n})$ while  the first term is negligible in front of $1$ thanks to \eqref{eq:expecsummable}: when $  \varepsilon, \zeta>0$ are fixed we have as $n \to \infty$
$$ \Expect{A_{\tau }\Indic{A_{\tau}\geq (\epsilon  + \zeta)n}} \leq 4 n \sum_{y=0}^\infty  \Prob{A_{\tau } \geq ( \varepsilon + \zeta +y) n} \underset{\eqref{eq:expecsummable}}{=} o\left( 4n \sum_{y=0}^\infty \frac{ \varepsilon}{ (\zeta+y)^2 n}\right) = o(1).$$
For fixed $ \varepsilon, \zeta>0$, as $n \to \infty$, we deduce that the event $\tau_{\geq \varepsilon n}< \tau_0$ happens with probability of order at least  $1/ ( (\varepsilon +\zeta) n)$. Letting $\zeta \to 0$, we deduce that $ \mathbb{P}(\tau_{\geq \varepsilon n}< \tau_0)$ is indeed asymptotic to $1/( \varepsilon n)$ and, that conditionally on that event, the process $A$ at that time is roughly equal to $ \varepsilon n$.
\end{proof}

Equipped with the above lemma, the proof of~\eqref{eq:survival-one-tree} in \Cref{thm:general-martingale-diffusion-cv-intro-intro} is clear: for fixed $ \varepsilon>0$, as $n \to \infty$, the event $\tau_{\geq \varepsilon n}< \tau_0$ happens with probability of order $1/ ( \varepsilon n)$ and, conditionally on it, the process $A$ at that time is roughly equal to $ \varepsilon n$. On this event, and conditioning on the past before $\tau_{\geq \varepsilon n}$,  by the convergence to a Feller diffusion and~\eqref{eq:bound-extinction-time-Feller}, we deduce that the process may then survive for $n$ steps with probability of order $2 \varepsilon /\sigma^2$, which gives the lower bound on the Kolmogorov estimate. 

For the upper bound, by the previous lemma, we can focus on the event $\{\tau_{\geq \varepsilon n}<  \sqrt{ \varepsilon} \cdot n, A_{\tau_{\geq \varepsilon n}} \in ( \varepsilon, (1 + \varepsilon) \varepsilon n)\}$ since the complementary event has probability small in front of $1/n$ when $n \to \infty$ for a fixed $ \varepsilon>0$. By the above argument, the probability of this event is roughly $1/ (\varepsilon n)$ and the process then has to survive at least for $ n-  \sqrt{ \varepsilon}n$ units of time, which by  \eqref{eq:bound-extinction-time-Feller} has probability of order $ \varepsilon \cdot 2/(\sigma^{2}(1- \sqrt{ \varepsilon}))$. \end{proof}

\subsection{Convergence of tuples of subpopulations in the brick wall model}
\label{sec:multiplefellers}

We now go back to our brick wall model, and prove extensions of~\Cref{thm:general-martingale-diffusion-cv-intro-intro}, first for tuples of subpopulations $(M_{t}([a, b[))$, then for tuples of \emph{dual} subpopulations, using the duality of Proposition~\ref{prop: forest duality}.

\subsubsection{Convergence of several populations  to independent Feller diffusions}

We now prove an extension of~\Cref{thm:general-martingale-diffusion-cv-intro-intro} in the case of several subpopulations $(M_{t}([a, b[))$. Indeed, even if the discrete subpopulations are not independent themselves, their scaling limit is given by independent Feller diffusions.

\begin{prop}
For $p \geq 1$ a positive integer, fix $s_1< \dots < s_{p+1} \in \R$, and consider the subpopulations $M_{\lfloor nt \rfloor}^{(n,i)}=M_{\lfloor nt\rfloor}([\lfloor ns_i\rfloor, \lfloor ns_{i+1}\rfloor[)$, for $1 \leq i \leq p$ and $0 \leq t \leq 1$, along with their respective extinction times $\tau^{(n,i)}$. Then, we have the following joint convergence:
\[
\Bigg(\Bigg(\frac{M_{\lfloor nt \rfloor}^{(n,i)}}{n} \Bigg)_{t\geq 0},\frac{\tau^{(n,i)}}{n}\Bigg)_{1 \leq i \leq p} \xrightarrow[n \to \infty]{(d)} \Big(\big(X_t^{(i)}\big)_{t\geq 0},\theta^{(i)}\Big)_{1\leq i \leq p},
\]
where, the $(X^{(i)})$ are independent Feller diffusion processes with respective initial conditions $X_0^{(i)}=s_{i+1}-s_i$, variances $(s_{i+1}-s_i)\sigma^2$, and extinction times $\theta^{(i)}$.
\label{coro:cv-of-tuple-of-pop-slices}
\end{prop}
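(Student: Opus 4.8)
\textbf{Proof strategy for Proposition~\ref{coro:cv-of-tuple-of-pop-slices}.} The plan is to bootstrap the single-population result (Theorem~\ref{thm:general-martingale-diffusion-cv}) to the multivariate setting, the key point being that the only genuinely new ingredient needed is the \emph{asymptotic independence after one time step}, which is exactly what Proposition~\ref{prop:clt-tuple-of-pop-slices} and the covariance bound Proposition~\ref{prop:bound on covariance} provide. First I would set up notation: write $\mathbf{M}_k^{(n)} = (M_k^{(n,1)}, \dots, M_k^{(n,p)})$ for the vector of subpopulation sizes. This is a $\Z_{\geq 0}^p$-valued homogeneous Markov chain (by the definition of the model and shift-invariance), and each coordinate is individually a martingale by Proposition~\ref{prop:slice-of-pop-martingale}. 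Crucially, conditionally on $\mathbf{M}_k^{(n)} = (n_1, \dots, n_p)$ with all $n_i$ large, the increments in disjoint blocks $[\lfloor ns_i\rfloor, \lfloor ns_{i+1}\rfloor[$ are \emph{almost} independent: after one step the coordinates interact only through the handful of bricks straddling the block boundaries, a negligible effect. I would make this precise by the same block-decomposition used in the proofs of Propositions~\ref{prop:cltonestepsingle}--\ref{prop:clt-tuple-of-pop-slices}: realize all subpopulations simultaneously from one bi-infinite renewal structure (Skorokhod-coupled to a $(p+1)$-dimensional Brownian motion via the FCLT), so that the $i$-th rescaled increment converges to $W^{(H_i)}(\cdot) - W^{(B)}(\cdot)$ evaluated at block-specific times, and these limits are independent across $i$ because they are built from disjoint increments of the driving Brownian motion.

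The core of the argument is then to run Step~1 and Step~2 of the proof of Theorem~\ref{thm:general-martingale-diffusion-cv} in the product setting. For Step~1, I would apply the same functional martingale CLT (\cite[Corollary 2.2]{ispany-pap} / \cite[Theorem IX.4.21]{jacod-shiryaev}) to the $\R^p$-valued martingale $\mathcal{U}^n_t = n^{-1}\mathbf{M}^{(n,\epsilon)}_{\lfloor nt\rfloor}$ stopped when any coordinate leaves $[\epsilon n, n/\epsilon]$. The drift condition is again automatic ($\beta = 0$); the key condition (ii) now requires control of the conditional \emph{covariance matrix} $\mathrm{Cov}(\mathcal{U}^n_{k+1} \mid \mathcal{F}^n_k)$, and the content of Propositions~\ref{prop:varianceonestep}, \ref{prop:bound on covariance}, and \ref{prop:clt-tuple-of-pop-slices} (applied at each scale, using homogeneity to reduce to $k=0$ started from the current configuration) is precisely that this matrix is asymptotically $\mathrm{diag}(\sigma^2 \mathcal{U}^{n,1}_k, \dots, \sigma^2 \mathcal{U}^{n,p}_k)$ — diagonal, with the off-diagonal entries being $o(1)$. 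This identifies the limit as the SDE system $\dinf X^{(i)}_t = \sigma\sqrt{X^{(i)}_t}\,\dinf W^{(i)}_t$ with \emph{independent} Brownian drivers $W^{(i)}$, i.e. independent Feller diffusions stopped at their respective exit times of $[\epsilon, 1/\epsilon]$. The big-jump condition (iii) is checked coordinatewise exactly as before. For Step~2, the passage from the stopped-in-a-strip statement to the full statement including extinction times $\tau^{(n,i)}$ is done coordinate by coordinate: the event $\mathcal{E}_{\epsilon,n}$ of Proposition~\ref{prop:good-behavior-outside-box} is replaced by its intersection over $i = 1, \dots, p$ (still of probability $\geq 1 - p\delta$ for $\epsilon$ small, $n$ large, by a union bound), and on this event the same sandwiching $\theta^{(i),\epsilon} \leq \liminf \tau^{(n,i),\epsilon}/n \leq \theta^{(i),0} \leq \limsup \tau^{(n,i)}/n$ together with the $\sqrt{\epsilon}$-type controls yields joint convergence of the full trajectories and extinction times. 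The supermartingale drift lemma from \cite{BCRwip} applies verbatim to each $(M^{(n,i)}_k)^{1/2}$.

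\textbf{Main obstacle.} The only real subtlety is justifying the asymptotic diagonality of the conditional covariance matrix \emph{uniformly in the current state within the strip}, since the martingale CLT needs condition (ii) along the whole trajectory, not just at the initial time. Here one uses homogeneity of the Markov chain $\mathbf{M}^{(n)}$ to reduce, at each level $n_0 \in [\epsilon n, n/\epsilon]$, to the one-step increment started from a configuration of block sizes all comparable to $n_0$; Proposition~\ref{prop:bound on covariance} (with constants $C_1, C_2, C_3$ ranging over the relevant bounded set) then gives the $o(n_0)$ bound on cross-covariances, and a standard argument upgrades the pointwise-in-$t$ convergence to the required convergence of the bracket process. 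Everything else is a routine, if notation-heavy, product-space repetition of the scalar proof.
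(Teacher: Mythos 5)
Your proposal is correct and follows essentially the same route as the paper: treat the vector of subpopulations as a homogeneous multivariate Markov chain/martingale, invoke \cite[Corollary 2.2]{ispany-pap} with the diagonal conditional covariance matrix supplied by Propositions~\ref{prop:varianceonestep}, \ref{prop:bound on covariance} and \ref{prop:clt-tuple-of-pop-slices}, and then repeat Step~2 of Theorem~\ref{thm:general-martingale-diffusion-cv} to handle extinction times. The paper's only stylistic difference is to write out the case $p=2$ explicitly and to obtain tightness of $(\tau^{(n,i)}/n)_i$ in one stroke via the domination $0\leq\tau^{(n,i)}\leq\tau_{(p+1)n}$ rather than coordinate by coordinate.
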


\begin{proof}
We will only detail the proof for $p=2$ and $s_1=0,s_2=1,s_3=2$, as the general case can be obtained similarly but needs much heavier notation. In that case, we are interested in $(M_{\lfloor nt \rfloor}^{(n,1)},M_{\lfloor nt \rfloor}^{(n,2)})=(M_{\lfloor nt\rfloor}([0,n[),M_{\lfloor nt\rfloor}([n,2n[))$, and their extinction times $\tau^{(n,1)},\tau^{(n,2)}$.

Note that $(M_{\lfloor nt \rfloor}^{(n,1)},M_{\lfloor nt \rfloor}^{(n,2)})$ is a bivariate martingale and a Markov chain. We will thus apply the same strategy as for a single subpopulation. We first define $(M_{\lfloor nt \rfloor}^{(n,1,\epsilon)},M_{\lfloor nt \rfloor}^{(n,2,\epsilon)})$ as the couple of subpopulations, up to the first time one of them leaves the strip $(n\epsilon, n/\epsilon)$. Then, we apply~\cite[Corollary 2.2]{ispany-pap}, this time to the bivariate sequence $(U_k^{(n,1)},U_k^{(n,2)})$, defined as
\[
U_{k+1}^{(n,i)}=\frac{1}{n}\left(M_{k+1}^{(n,i,\epsilon)}-M_{k}^{(n,i,\epsilon)}\right), \ \ U^{(n,i)}_0=1, \ \ \mathcal{U}^{(n,i)}_t=\sum_{k=0}^{\lfloor nt \rfloor} U^{(n,i)}_k=\frac{1}{n}M^{(n,i,\epsilon)}_{\lfloor nt \rfloor}.
\]
Condition (i) of~\cite[Corollary 2.2]{ispany-pap} is once again satisfied with $\beta=0$ since $(\mathcal{U}^{n,i}_t)_t$ is a martingale. Moreover, from~\eqref{eq:pop-variance-after-one-step} and~\Cref{prop:bound on covariance}, condition (ii) is also satisfied with 
\[
\gamma(s,u)_{i,j}=\sigma\sqrt{u_i}\cdot\delta_{i,j}.
\]
To check condition (iii), we can also proceed similarly to the case of a single subpopulation. Indeed, from~\Cref{prop:clt-tuple-of-pop-slices}, $(M_1^{(n,1)},M_1^{(n,2)})$, rescaled by $\sqrt{n}$, converges to a couple of independent Gaussian variables, so that 
\[
\frac{1}{n}\lVert \Delta M_1^{n} \rVert^2=\frac{1}{n}\left((M_1^{(n,1)}-n)^2+(M_1^{(n,2)}-n)^2\right)\xrightarrow[n \to \infty]{(d)}\sigma^2\chi_2^2
\]
and, from~\eqref{eq:pop-variance-after-one-step} we have convergence of their means
\[
\Expect{\frac{1}{n}\lVert \Delta M_1^{n} \rVert^2}\xrightarrow[n \to \infty]{}2\sigma^2=\Expect{\sigma^2\chi_2^2}.
\]
Using once again \cite[Lemma 4.11]{kallenberg}, we can show that this implies condition (iii) of~\cite[Corollary 2.2]{ispany-pap}.

We can then conclude with the same chain of arguments as for~\Cref{thm:general-martingale-diffusion-cv-intro-intro}, observing that the tightness of the rescaled extinction time $\frac{\tau_{2n}}{2n}$ obtained above, implies the joint tightness of $(\frac{\tau^{(n,1)}}{n},\frac{\tau^{(n,2)}}{n})$, since we have $0 \leq \tau^{(n,i)} \leq \tau_{2n}$.
\end{proof}

A better way to phrase the above theorem is to use the \textbf{Feller flow}. This flow is constructed in \cite{dawson-li} (see also \cite{bertoinlegall2} for a related construction) and gives a two-parameter process $(X_{t}(x) : t \geq 0, x \geq 0)$ which satisfies the following conditions:
\begin{itemize}
\item For each $x_{1}<x_{2}< ...<x_{k} $, the processes $(X_{t}(x_{i+1})-X_{t}(x_{i}))_{t \geq 0}$ for $1 \leq i \leq k-1$ are c\`adl\`ag  independent  Feller diffusions starting from $x_{i+1}-x_{i}$,
\item For each $t \geq 0$, the process $x \mapsto X_{t}(x)$ is c\`adl\`ag non-decreasing.
\end{itemize}

\subsubsection{Control of the dual forests}
\label{sec:cv-feller-dual}
In the next section we shall use the results established above, but for the dual forest of descending trees. To do so, we will of course use the duality property~\eqref{eq: forest duality}, and the following control on the Radon--Nikodym derivatives involved. Recall in particular from Section \ref{subsec:dual}, that $I_{r}$ is the index of the first ascending tree having offspring at level $r$ and that $K_{r}=\#T^{\uparrow}_{(I_r,0)}[\rho](r)$ is the total size of that offspring.

\begin{prop}
\label{prop: absolute-continuity-dual}
We have the following convergence in distribution:
\[
\frac{K_n}{n}\xrightarrow[n \to \infty]{(d)} \mathcal{E}_{2/\sigma^{2}}  \quad \mbox{ and }\quad \frac{I_n}{n}\xrightarrow[n \to \infty]{(d)} \mathcal{E}_{2/\sigma^{2}}
\]
where $\mathcal{E}_{2/\sigma^{2}}$ is an exponential variable of parameter $2/\sigma^2$. Furthermore $(\frac{K_n}{n} : n \geq 1)$ is uniformly integrable.

Recalling notation from Proposition \ref{prop: forest duality}, we deduce that the law of the dual forest strip $n^{-1}\widetilde{{F}}_{[0,n]}^{\downarrow}[\rho]$ is contiguous with respect to the law of the primal strip of the forest with reverse brick distribution, $n^{-1}{{F}}_{[0,n]}^{\uparrow}[^t\rho]$.
\end{prop}

\begin{proof}[Proof of Proposition~\ref{prop: absolute-continuity-dual}] Thanks to Theorem~\ref{thm:general-martingale-diffusion-cv-intro-intro}, we can construct a coupling between the discrete processes and Feller flow (see the end of the previous section), such that we have jointly, for any $x \in \Q$
\[
\frac{1}{n}M_n([0,xn[)\xrightarrow[n \to \infty]{a.s.}X_1(x).
\]
In particular, it is well-known that
$$ \xi = \inf\{ s \geq 0 : X_1(s)>0\},$$
is exponentially distributed and that $X_{1}(\xi)$ is also exponentially distributed. The convergence of $I_{n}/n$ towards $\xi$ follows then from the continuous mapping theorem, but not that of $K_{n}/n$: indeed, it might be the case that several trees reach level $n$ around the position $I_{n}$ and that in the scaling limit those different discrete trees collapse into a single one. However, if this phenomenon happens, the discrete number of trees reaching level $n$ is strictly larger than the continuous one. The crux is that the Kolmogorov estimate \eqref{thm:general-martingale-diffusion-cv-intro-intro} will imply that both variables have the same expectation so this is excluded. Let us elaborate. The convergence towards the Feller flow implies that if $I^{{(1)}}_{n}, I^{{(2)}}_{n},...$ are the indices of the discrete ascending trees that have offspring at level $n$ then we have the following convergence 
$$\{ n^{{-1}}I^{{(i)}}_n, i \geq 0\} \xrightarrow[n\to\infty]{a.s.} \mathcal{S},$$ 
for the local Hausdorff distance on closed sets, and where $ \mathcal{S}$ are the set of points where $X_{1}(\cdot)$ has discontinuities (i.e., such that there exists a continuous tree reaching level $1$ planted at $x$). As recalled above, the random set $ \mathcal{S} = \{ S_{1}<S_{2}< \cdots \}$ is Poisson with intensity $  \frac{2}{\sigma^{2}}$. This convergence does not guarantee that $n^{{-1}}I^{{(i)}}_{n} \to S_{i}$ almost surely since we may have e.g. $n^{{-1}}I^{{(1)}}_{n} \to S_{1}$ and $n^{{-1}}I^{{(2)}}_{n} \to S_{1}$, but we have at least  $\limsup n^{{-1}}I^{{(i)}}_{n} \leq S_{i}$. However, from~\eqref{eq:survival-one-tree}, we have
\[
\Expect{\# \{ i \geq 1 : I^{{(i)}}_{n} \leq A n\} }= An\cdot\Prob{M_n([0,1[)>0}\xrightarrow[n \to \infty]{}\frac{2A}{\sigma^2}=\Expect{ \mathcal{S} \cap [0,A]},  \ \ 
\]
and so we must have the corresponding lower bound $\limsup n^{{-1}}I^{{(i)}}_{n} \geq S_{i}$. In particular, $K_{n}/n$ converges towards $X_{1}(\xi)$ which is also exponentially distributed (Yaglom's limit law) as desired. The uniform integrability follows from Scheffé's lemma since $K_{n}/n$ as asymptotically the same expectation as its limit law:
$$ \mathbb{E}[K_{n}] = \mathbb{E}[M_{n}([0,1[) \mid M_{n}([0,1[) >0] \underset{\eqref{eq:survival-one-tree}}{\sim} \frac{1}{ \frac{2}{\sigma^{2} n}} \sim n \mathbb{E}[ \mathcal{E}_{2/\sigma^{2}}].$$

To establish the contiguity, let $(A_{n})_{n \geq 1}$ be a sequence of events such that $ \mathbb{P}( n^{-1}{{F}}_{[0,n]}^{\uparrow}[^{t}\rho] \in A_{n}) \to 0$. Recall notation from and around Proposition \ref{prop: forest duality}, in particular $ \# T^{\uparrow}_{(I_r,0)}[\rho](r) = K_{r}$ where $I_{r}$ is the index of the first ascending tree on the right reaching level $r$. Similarly we set $J_{r}$ for the index of the first tree on the left reaching level $r$ and introduce the similar variables $^{t}K_{r}, ^{t}I_{r}, ^{t}J_{r}$ for the reverse brick distribution $^{t} \rho$. We can then write 
  \begin{eqnarray*} \mathbb{P}\left( n^{-1}\widetilde{{F}}_{[0,n]}^{\downarrow} \in A_{n}\right) & \leq &   \mathbb{P}(K_{n} \leq \varepsilon n) + \mathbb{E}\left[ \left(\mathbf{1}_{n^{-1}\widetilde{{F}}_{[0,n]}^{\downarrow}[\rho] \in A_{n}} \cdot \frac{n}{K_{n}}  \mathbf{1}_{K_{n} \geq \varepsilon n}\right) \frac{K_{n}}{n}\right]\\
  & \underset{ \mathrm{Prop.} \ref{prop: forest duality}}{=}&\mathbb{P}(K_{n} \leq \varepsilon n) + \mathbb{E}\left[ \left(\mathbf{1}_{n^{-1}{F}_{[0,n]}^{\uparrow}[^{t}\rho] \in A_{n}} \cdot  \frac{n}{ ^{t}I_{r} + ^{t}J_{r}} \mathbf{1}_{^{t} I_{n} + ^{t} J_{n} \geq \varepsilon n  }\right)\frac{^{t}K_{n}}{n} \right]  \\
   &\leq & \mathbb{P}(K_{n} \leq \varepsilon n) +  \frac{1}{ \varepsilon} \mathbb{E}\left[ \mathbf{1}_{n^{-1}{F}_{[0,n]}^{\uparrow}[^{t}\rho] \in A_{n}} \frac{^{t}K_{n}}{n} \right].  \end{eqnarray*}
By the previous point applied to the brick model with distribution $\rho$, the first term can be made arbitrary small provided that $ \varepsilon$ is small, then the second term tends to $0$ by the previous point applied to the brick model with distribution $^{t}\rho$.
\end{proof}

\section{Scaling limit of the trees}
\label{sec:forest}

The goal of this section is to obtain the convergence of the whole rescaled genealogical forest to the Brownian forest, for the local pointed Gromov--Hausdorff topology. Recall from Definition~\ref{def:forests} the definition of the trees $T^{\uparrow}_{i}$, and consider the pointed tree $ \underline{F}^{\uparrow}$ obtained by attaching those trees on the bi-infinite line $\R$ pointed at $0$. For our discussion, it will be convenient to use the labeling of the nodes of $\underline{F}^{\uparrow}$ by $\{ (i,j) : i \in \mathbb{Z}, j \geq 0\}$.

Before getting to the proof of Theorem~\ref{thm:main}, let us briefly discuss the local Gromov--Hausdorff topology (for more details see  Chapter 8.1 in \cite{burago-burago-ivanov}, the reference \cite{abraham-delmas-hoscheit} or Section 2 of \cite{curien-legall-brownian-plane}). First recall that a metric space $(E,d)$
is called a length space if, for every $a,b\in E$, the distance $d(a,b)$ coincides with the infimum
of the lengths of continuous curves connecting $a$ to $b$. We also say that $(E,d)$ is boundedly compact if all its closed balls are compact.
 For $r>0$, and $(E,d,\rho)$ a pointed metric space, let us denote by $\mathrm{Ball}_r(E)$ the closed ball of radius $r$ centered at $\rho$ in $E$. A sequence $(E_n,d_n,\rho_n)$ of pointed boundedly compact length spaces is said to converge to $(E,d,\rho)$ in the local Gromov-Hausdorff topology if, for every $r>0$, $\mathrm{Ball}_r(E_n)$ converges to $\mathrm{Ball}_r(E)$, for the
usual Gromov-Hausdorff distance between pointed compact metric spaces. The space of all pointed boundedly compact length spaces (modulo isometries) can be equipped with a metric $\dgh$ which is compatible with the preceding notion of convergence, and is then separable and complete for this metric.

\subsection{High-level proof of Theorem~\ref{thm:main}}
 
\begin{figure}[!h]
  \begin{center}
  \includegraphics[width=13cm]{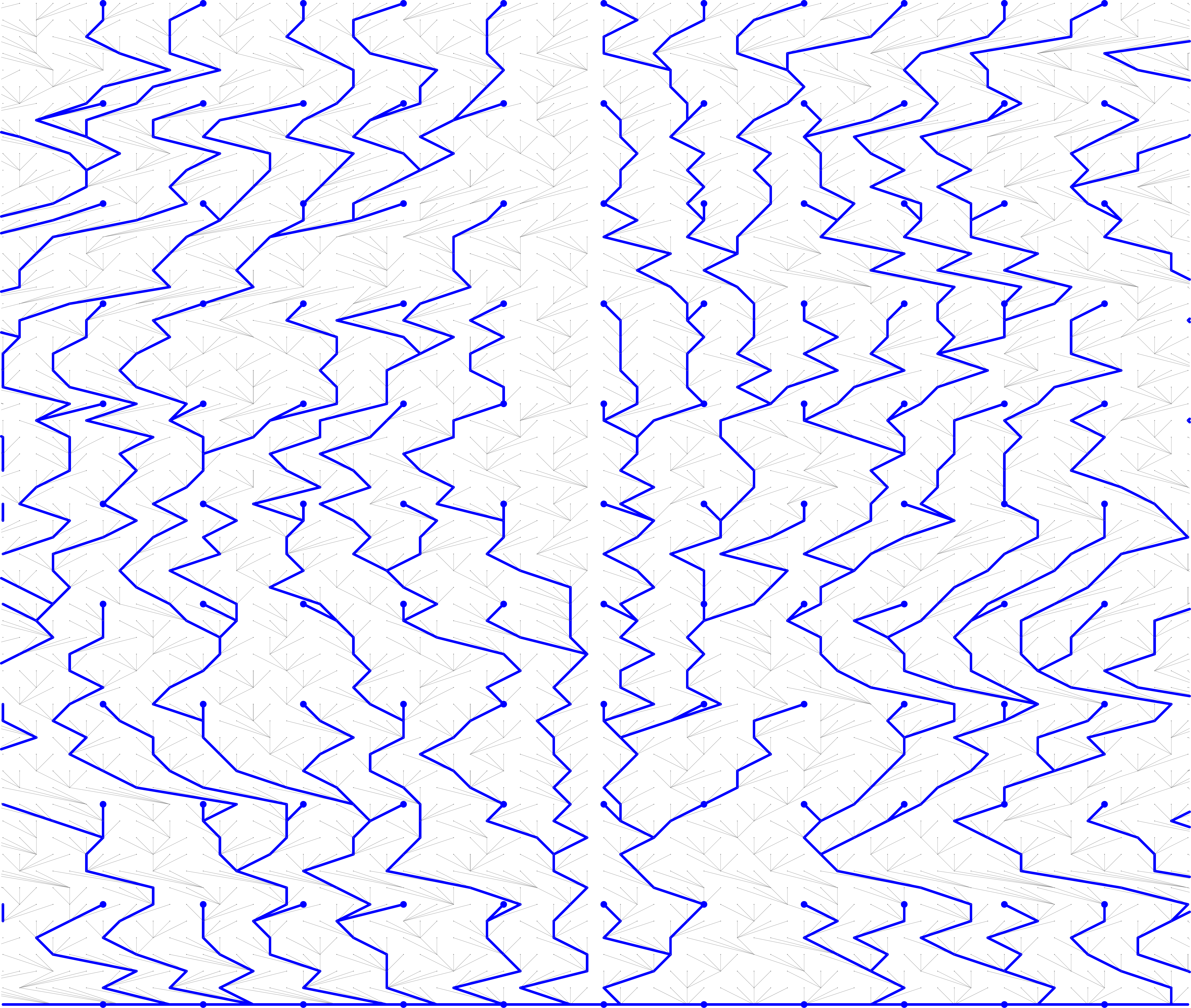}
  \caption{Illustration of the $ \varepsilon$-meshing approximation $\underline{F}^{\uparrow, ( \varepsilon n)}$: the whole forest $\underline{F}^{\uparrow}$ is in grey, and the meshing in blue.}
\label{fig:meshing}
  \end{center}
  \end{figure}

To prove Theorem~\ref{thm:main}, we will make use of the following \emph{meshing} of the forest $\underline{F}^{\uparrow}$. Fix $ \varepsilon>0$ and suppose that $ \varepsilon n$ is an even integer to simplify our notation. An obvious proxy for  $\underline{F}^{\uparrow}$ is obtained by the subforest spanned by the nodes with coordinates $(i \varepsilon n+ \frac{1}{2}, j  \varepsilon n)$ for  $i \in \mathbb{Z}, j \geq 0$. Those nodes will be hereafter called the \textbf{\boldmath$\varepsilon n$-nodes}. We shall denote by $\underline{F}^{\uparrow, ( \varepsilon n)}$ the subtree of $\underline{F}^{\uparrow}$ spanned by the $ \varepsilon n$-nodes, similarly grafted on the bi-infinite line $\R$ pointed at 0, see Figure~\ref{fig:meshing}.

 The advantage of $\underline{F}^{\uparrow, ( \varepsilon n)}$ is that it can be described by ``finitely many'' random variables (as $n \to \infty$) and that it approximates $\underline{F}^{\uparrow}$ very well in the following sense:
 
 \begin{prop}\label{prop: cv-of-epsilon-meshing-unif-in-n}
The forest $\underline{F}^{\uparrow}$ is close for the pointed Gromov-Hausdorff distance, to its $\epsilon$-meshing $\underline{F}^{\uparrow,(\epsilon n)}$, uniformly in $n$. More precisely, for every $\delta >0$ we have
\begin{equation}
\lim_{\epsilon \to 0}\limsup_{n \to \infty} \Prob{ \dgh\big( n^{-1}\cdot \underline{F}^{\uparrow}, n^{-1}\cdot  \underline{F}^{\uparrow,(\epsilon n)}\big)\geq \delta}=0.
\label{eq:cv-of-epsilon-meshing-unif-in-n}
\end{equation}
\end{prop}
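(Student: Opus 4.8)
The plan is to reduce the statement to a density (Hausdorff) estimate and then to show that, with high probability, no vertex of $\underline{F}^{\uparrow}$ near the origin is far from the $\epsilon n$-meshing. First I would observe that since $\underline{F}^{\uparrow,(\epsilon n)}$ is \emph{spanned} by the $\epsilon n$-nodes, it is a geodesically convex subtree of $\underline{F}^{\uparrow}$: it carries the restriction of $\mathrm{d_{gr}}$ and contains the whole line $\R$, hence the base point $0$. Therefore, for each fixed $r>0$, the local pointed Gromov--Hausdorff distance between the two rescaled spaces is controlled --- up to the harmless adjustments of radii inherent to $\dgh$ --- by $n^{-1}$ times the largest distance from a vertex $v$ of $\underline{F}^{\uparrow}$ with $\mathrm{d_{gr}}(0,v)\le rn$ to the subtree $\underline{F}^{\uparrow,(\epsilon n)}$. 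Thus it suffices to prove that for every $r,\delta>0$,
\[
\lim_{\epsilon\to0}\ \limsup_{n\to\infty}\ \Prob{\exists\, v:\ \mathrm{d_{gr}}(0,v)\le rn \text{ and } \mathrm{d_{gr}}\big(v,\underline{F}^{\uparrow,(\epsilon n)}\big)> 3\delta n}=0 .
\]

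Next I would analyse such a ``bad'' vertex $v$, sitting in $T^{\uparrow}_{k}$ at height $h_v$ with $|k|\le rn$ and $h_v\le rn$. Low vertices are easy: if $h_v\le 3\delta n$ then $v$ is within $3\delta n$ of its base $(k,0)$, which lies on the line $\subseteq\underline{F}^{\uparrow,(\epsilon n)}$, so $v$ cannot be bad; hence $h_v>3\delta n$. Let $h^{*}$ be the largest multiple of $\epsilon n$ not exceeding $h_v-\delta n$ (so $\epsilon n\le h^{*}\le rn$ once $\epsilon$ is small) and let $a^{*}$ be the ancestor of $v$ at height $h^{*}$; it is a descendant of $(k,0)$ with $|k|\le rn$. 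The key point, which uses \textbf{planarity} of the forest, is that for any meshing height $j\epsilon n$ with $h^{*}<j\epsilon n\le h_v$ the descendants of $a^{*}$ at height $j\epsilon n$ form a contiguous block in the left--right order; so if there are at least $\epsilon n$ of them, the block contains an $\epsilon n$-node $N$, and since $v$ and $N$ are both descendants of $a^{*}$,
\[
\mathrm{d_{gr}}(v,N)\le (h_v-h^{*})+(j\epsilon n-h^{*})< 2\delta n+2\epsilon n ,
\]
contradicting badness of $v$. Consequently, badness of $v$ forces the single-node population descended from $a^{*}$ to remain $\le\epsilon n$ at all the $\ge\delta/\epsilon-O(1)$ consecutive meshing times following $h^{*}$ while surviving until height $h_v\ge h^{*}+\delta n$ --- in short, it forces the existence of a ``long thin surviving branch'' rooted at some node $a^{*}$ at a meshing height $\le rn$ descending from $[-rn,rn[$.

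It then remains to bound the expected number $N_{\epsilon,\delta,n}$ of such nodes $a^{*}$. Summing over the $\le r/\epsilon+1$ meshing heights $m\epsilon n\le rn$: conditionally on the first $m\epsilon n$ rows of the wall, by \Cref{prop:slice-of-pop-martingale}, the Markov property and shift-invariance, the population descended from a single node $a$ at that height evolves as the population chain started from $1$, so the conditional probability that $a$ carries a long thin surviving branch is some $q_{\epsilon,\delta,n}$ independent of $a$. Conditioning this chain at time $\epsilon n$ and using \Cref{cor:surviveproba} (still alive at time $\epsilon n$ with probability $\le c^{-1}/(\epsilon n)$) followed by \Cref{lem:bound proba long thin pop} on the remaining $\sim\delta/\epsilon$ steps (probability $\le c^{-1}\mathrm{e}^{-c\delta/\epsilon}$) gives $q_{\epsilon,\delta,n}\le C\,\epsilon^{-1}n^{-1}\mathrm{e}^{-c\delta/\epsilon}$. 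Since the number of candidate nodes at height $m\epsilon n$ inside the ball is at most $M_{m\epsilon n}([-rn,rn[)$, of expectation $2rn$ by the martingale property, we obtain
\[
\Expect{N_{\epsilon,\delta,n}}\le \Big(\tfrac{r}{\epsilon}+1\Big)\cdot 2rn\cdot C\,\epsilon^{-1}n^{-1}\mathrm{e}^{-c\delta/\epsilon}=O\big(r^{2}\epsilon^{-2}\mathrm{e}^{-c\delta/\epsilon}\big)\xrightarrow[\epsilon\to0]{}0 ,
\]
and Markov's inequality finishes the reduction, hence the proposition.

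The step I expect to be the real difficulty is this last counting argument. Naively union-bounding the ``long thin branch'' estimate \Cref{lem:bound proba long thin pop} over all candidate branch points cannot work: there are of order $n$ nodes at each meshing height inside a ball of radius $rn$, while that estimate only yields the $n$-independent bound $\mathrm{e}^{-c\delta/\epsilon}$ per node. The way out is to spend the survival estimate \Cref{cor:surviveproba} first --- it contributes the essential factor $1/(\epsilon n)$, turning each term into an $o(1/n)$ --- and only then to invoke \Cref{lem:bound proba long thin pop} on the remaining time window. A secondary subtlety, handled by the planarity/contiguity argument above, is the passage from ``the ancestral line of $v$ stays thin'' to ``an entire sub-population stays thin'', at the immaterial price of replacing $\delta n$ by $2\delta n$ and enlarging the ball by $O(\delta n)$.
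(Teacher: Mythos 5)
Your proposal is correct and follows essentially the same route as the paper's proof: reduce to a Hausdorff-distance estimate inside a ball, identify the "bad" events as long thin surviving subpopulations rooted at a meshing height, bound each such event by combining \Cref{cor:surviveproba} (contributing the crucial factor $1/(\epsilon n)$) with \Cref{lem:bound proba long thin pop} (contributing $e^{-c\delta/\epsilon}$), and finish with a first-moment union bound over the $O(n/\epsilon)$ candidate nodes, giving a bound of order $\epsilon^{-2}e^{-c\delta/\epsilon}\to 0$. The only cosmetic difference is bookkeeping: the paper defines a $(\delta,\epsilon,n)$-bad \emph{node} (one whose subtree has height $\ge\delta n$ but contains no $\epsilon n$-node), restricts to a deterministic $\mathrm{Box}_{C,n}$ via \Cref{lem:box}, and passes to a bad node at a meshing height; you instead start from a bad \emph{vertex} $v$ far from the meshing, pass to its ancestor $a^*$ at a meshing height, and sum the expected number of culprit nodes via the martingale identity $\Expect{M_{m\epsilon n}([-rn,rn[)}=2rn$ rather than via a deterministic box — both are the same first-moment argument.
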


We also have a convergence of $ n^{-1} \cdot \underline{F}^{\uparrow, ( \varepsilon n)}$ at fixed mesh size:

\begin{prop} For each $  \varepsilon>0$, the random forest $ n^{-1} \cdot \underline{F}^{\uparrow, ( \varepsilon n)}$ converges in distribution as $ n \to \infty$.
\label{prop:cv-fixed-mesh-size}
\end{prop}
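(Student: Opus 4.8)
The plan is to use the $\varepsilon n$-meshing description, which involves only finitely many subpopulations evolving over a bounded number of $\varepsilon n$-time-steps, and to show that after rescaling by $n^{-1}$ this finite collection of data converges jointly in distribution; from this one reads off convergence of $n^{-1}\cdot\underline{F}^{\uparrow,(\varepsilon n)}$ as an element of the space of pointed boundedly compact length spaces. Concretely, fix $\varepsilon>0$; by definition of the local Gromov--Hausdorff topology it suffices to prove convergence of the balls of each fixed radius $A$, hence to control the structure of $\underline{F}^{\uparrow,(\varepsilon n)}$ within height $An$ and within the horizontal box $\mathrm{Box}_{C,n}$ of Lemma~\ref{lem:box} (which, with probability tending to $1$ uniformly in $n$, contains all relevant nodes). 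Inside that box the $\varepsilon n$-meshing is determined by: (i) for each row $j$ of $\varepsilon n$-nodes and each pair of consecutive $\varepsilon n$-nodes, the number of descendants at height $(j+1)\varepsilon n$, i.e.\ a finite tuple of the quantities $M_{\varepsilon n}([a,b[)$ for $a,b$ on the $\varepsilon n$-grid; and (ii) the genealogical relations between these, which are encoded by the same data read at the previous level. This is exactly a finite iterate of one-step transitions of the population Markov chain over time-scale $\varepsilon n$.

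The key steps, in order, would be the following. First, by Lemma~\ref{lem:box} and a union bound, restrict to the event that all nodes of $\mathrm{Ball}_{An}(\underline{F}^{\uparrow})$ lie in $\mathrm{Box}_{C,n}$ for a suitable $C=C(A,\delta)$; on this event the meshing ball is a deterministic functional of the finite family of subpopulation sizes $\big(M_{k\varepsilon n}([a,b[)\big)$ indexed by the finitely many grid-intervals $[a,b[\subset[-Cn,Cn[$ and by $0\le k\le A/\varepsilon$. Second, invoke Proposition~\ref{coro:cv-of-tuple-of-pop-slices} to get that, after rescaling space by $n^{-1}$ and time by $n^{-1}$, the first row of these subpopulations converges jointly to a tuple of independent Feller diffusions sampled at time $\varepsilon$, together with their extinction times; then apply the branching/Markov property of the Feller diffusion and Proposition~\ref{coro:cv-of-tuple-of-pop-slices} again at each subsequent level to propagate the convergence inductively over the $A/\varepsilon$ levels, using that conditionally on a level the subpopulations at the next level are, in the limit, independent Feller diffusions started from the current values. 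This yields joint convergence in distribution of the full finite tuple of rescaled subpopulation sizes (and of the associated extinction times, needed to know when a grid-interval has died out and hence is absent from the meshing). Third, observe that the $\varepsilon n$-meshing forest, restricted to radius $An$ and box $\mathrm{Box}_{C,n}$, is a continuous function of this tuple (for the Gromov--Hausdorff distance on balls), because two grid-intervals at level $k$ are joined below level $(k-1)\varepsilon n$ exactly according to the grid-interval at the previous level that contains their common ancestor edge, information that is itself part of the tuple; so the continuous mapping theorem gives the desired convergence, and letting $A\to\infty$ (with $\delta\to0$) upgrades it to convergence in the local pointed Gromov--Hausdorff topology via the completeness of $(\cdot,\dgh)$.

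The main obstacle I expect is the bookkeeping in the inductive step: one must set up the finite data so that the genealogical relations between consecutive rows of $\varepsilon n$-nodes are encoded in a way that passes to the limit, i.e.\ one needs a joint convergence of subpopulation counts \emph{and} of the combinatorial structure saying which ancestor-block each block belongs to. Since the blocks are nested and determined by the same counting process one level down, this should follow from iterating Proposition~\ref{coro:cv-of-tuple-of-pop-slices} conditionally, but verifying the Markov/branching structure of the limit carefully (e.g.\ that the limiting Feller diffusions at level $k+1$ are conditionally independent given level $k$ with the right initial data, and that extinction times are compatible) requires some care. A secondary, more routine point is checking continuity of the ``decode the meshing forest from the finite tuple'' map on the event that nothing escapes the box, which is where Lemma~\ref{lem:box} and the extinction-time information enter. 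Once these are in place, the statement follows.
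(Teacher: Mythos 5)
Your outline is right up to the point where you invoke a continuous-mapping argument, but the claim that the $\varepsilon n$-meshing forest $\underline{F}^{\uparrow,(\varepsilon n)}$ (restricted to a box) is a \emph{continuous function} of the tuple of rescaled subpopulation sizes and extinction times is precisely where the paper says the naive approach fails, and it is a genuine gap. The quantities $M_{k}([a,b[)$ count \emph{descendants} of a grid interval, so they directly encode the structure of the descending forest $F^{\downarrow}$, not the ascending one. They tell you, for an $\varepsilon n$-node at row $j$, \emph{which} grid interval at row $j-1$ contains its ancestor, but not \emph{where} inside that interval the ancestor sits; consequently, if two ancestral lineages of $\varepsilon n$-nodes fall into the same grid interval at some row, you cannot tell from this data whether they have already merged or not, nor at what height they merge. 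Since the metric of $\underline{F}^{\uparrow,(\varepsilon n)}$ is determined by the exact heights of pairwise MRCAs of $\varepsilon n$-nodes, it is simply not a functional of the subpopulation counts (let alone a continuous one), and the continuous mapping theorem cannot be applied as in your third step.

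The paper's proof circumvents this by a two-scale argument. Fix $\zeta<\varepsilon$ with $\varepsilon/\zeta\notin\mathbb{Q}$ and consider the subpopulations between $\zeta n$-grid points. These \emph{do} determine, as a continuous functional, the descending subforest $F^{\downarrow,(\zeta n)}$ (its branches are exactly the boundaries between surviving subpopulations), and Proposition~\ref{coro:cv-of-tuple-of-pop-slices} gives convergence of that object. The dual forest constrains the primal one: two ascending lineages lying in the same interval of $F^{\downarrow,(\zeta n)}$ must merge above the bottom of that interval, and the remaining uncertainty is bounded by Lemma~\ref{lem:nobad}, which says that with high probability there is no $(\delta,\zeta,n)$-bad node, so that any two such lineages must in fact meet within $\delta n$. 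Thus $\underline{F}^{\uparrow,(\varepsilon n)}$ is determined, \emph{up to a $\delta n$ Gromov--Hausdorff error} on a high-probability event, by $F^{\downarrow,(\zeta n)}$, and the proof concludes by letting $\zeta\to0$ (hence $\delta\to0$) using a Cauchy-type argument. Your proposal is missing the passage through the dual forest and, crucially, the bad-node control from \Cref{sec:corollaries}, without which there is no way to convert approximate interval-containment information into metric information on $\underline{F}^{\uparrow,(\varepsilon n)}$.

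A minor point on the side: your inductive, row-by-row application of Proposition~\ref{coro:cv-of-tuple-of-pop-slices} is a reasonable way to justify joint convergence of subpopulations started at different heights (the proposition as stated handles a single starting row), and this is in the spirit of, though more explicit than, the way the paper invokes that proposition; this part of your plan is fine and not the issue.
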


We postpone the proofs of Propositions~\ref{prop: cv-of-epsilon-meshing-unif-in-n} and~\ref{prop:cv-fixed-mesh-size} to Sections~\ref{sec:proof-cv-of-epsilon-meshing-unif-in-n} and~\ref{sec:proof-cv-fixed-mesh-size} respectively. 

Let us explain how we can now prove Theorem~\ref{thm:main} using a double limit procedure. Let us abstract the setting for clarity. We have random variables $X_{n}$ (here $ n^{-1} \cdot  \underline{F}^{\uparrow}$) with values in a Polish space $(E,d)$ (here the space of boundedly compact pointed length spaces). For each $  \varepsilon>0$, we possess an $  \varepsilon$-approximation $ X_{n}^{{( \varepsilon)}}$ (here $n^{-1} \cdot F^{\uparrow, ( \varepsilon n)}$) which is controlled uniformly (Proposition \ref{prop: cv-of-epsilon-meshing-unif-in-n}) in the sense that 
\begin{displaymath}
  \lim_{ \varepsilon \to 0} \sup_{n \geq 1} \Delta \big( \mathcal{L}( X_{n}), \mathcal{L}(X_{n}^{{( \varepsilon)}})\big) =0,
\end{displaymath}
where $ \mathcal{L}(X)$ is the law of $X$, thus a point in the Polish space $ \mathcal{M}_{1}(E)$ of probability measures on $E$, endowed with the L\'evy-Prokhorov metric  $\Delta$. Furthermore, each $ \varepsilon$-approximation  converges in law $X_{n}^{(  \varepsilon)} \to \mathcal{X}^{(\varepsilon)}$ as $ n \to \infty$ (Proposition~\ref{prop:cv-fixed-mesh-size}), or in terms of laws using the L\'evy-Prokhorov metric:
\begin{displaymath}
  \Delta \big( \mathcal{L}(X_{n}^{(  \varepsilon)}), \mathcal{L}( \mathcal{X}^{( \varepsilon)})\big) \xrightarrow[n\to\infty]{}0.
\end{displaymath}
Coupling the previous two displays, we deduce that $(\mathcal{L}( \mathcal{X}^{( \varepsilon)}) : \varepsilon >0)$ is a Cauchy family in $(\mathcal{M}_{1}(E), \Delta)$ and so converges towards $\mathcal{L}(\mathcal{X})$ for some random variable $\mathcal{X}$. It  is then an easy matter to see that $ \Delta( \mathcal{L}(X_{n}), \mathcal{L}(\mathcal{X})) \to 0$ or equivalently that $X_{n} \to \mathcal{X}$ in distribution as $n \to \infty$. This proves the desired convergence in law.
In our case, in order to identify $ \mathcal{X}$ with the Brownian forest $ \mathcal{F}_{\sigma}$, it suffices to notice the law of the limit $ \mathcal{X}$ only depends on $ \sigma>0$, and that this is a very well-known result (see Theorem \ref{thm:aldous}) in the case when the brick distribution is associated with Bienaymé--Galton--Watson trees with finite variance! (This bypasses the technical reconstruction of the Brownian forest from the Feller flow).

\subsection{Proof of Proposition~\ref{prop: cv-of-epsilon-meshing-unif-in-n}}
\label{sec:proof-cv-of-epsilon-meshing-unif-in-n}

By definition of the local Gromov--Hausdorff topology, it is sufficient to prove the result once restricted to a ball centered at the root of arbitrary radius $A >0$ in $\underline{F}^{\uparrow}$ and $\underline{F}^{\uparrow, (\varepsilon n)}$ and we take $A=1$ to fix ideas. We first start by showing that all nodes of those trees lie in some large box (for their embedding in $ \mathbb{R}^2$) :
\begin{lem} \label{lem:box} For any $ \delta >0$, there exists $C>0$ such that for all $n$ large enough, for their natural embedding in $ \mathbb{R}^2$ all nodes of $  \mathrm{Ball}_n (\underline{F}^{\uparrow})$  lie in the ``box'' $$ \mathrm{Box}_{C,n} :=   \{(i+ \frac{1}{2},j) :  -C n \leq i \leq Cn,  0 \leq j \leq n\}$$ with probability at least  $1 -\delta$. 
\end{lem}
\begin{proof} The nodes accessible in the ball of radius $n$ centered at the root $(0,0)$ of  $\underline{F}^{\uparrow}$ belong to the nodes of $T_{i}^{\uparrow}$ at height less than $n$ for some index $i$ satisfying $-n  \leq i \leq n$. The absolute value of the abscissae of those nodes are bounded from above by the quantity
  \begin{displaymath}
    \max_{ 0 \leq k \leq n} M_{k}([-n, n[) + 1
  \end{displaymath}
which by virtue of Proposition \ref{coro:cv-of-tuple-of-pop-slices} converges in law once renormalized by $n^{-1}$. This proves the result. \end{proof}

Fix $ \delta > \varepsilon$ and let us call a node $(i+ \frac{1}{2},j)$ a \boldmath$(\delta, \varepsilon,n)$-\textbf{bad} node, if the tree descended from \unboldmath$(i+\frac{1}{2},j)$  in $F^{\uparrow}$ has height at least $\delta n$ and contains no $ \varepsilon n$-node. Clearly, if there is no $(\delta, \varepsilon,n)$-bad node within distance $n$ from the root in $F^{\uparrow}$, then every node of $  \mathrm{Ball}_n (\underline{F}^{\uparrow})$ is within distance at most $\delta n$ inside $ \underline{F}^{\uparrow}$ from an $ \varepsilon n$-node, so that the Hausdorff distance between 
$  \mathrm{Ball}_n (\underline{F}^{\uparrow})$ and $  \mathrm{Ball}_n (\underline{F}^{\uparrow, ( \varepsilon n)})$ is at most $2 \delta n$. Given the previous lemma, the proof of Proposition \ref{prop: cv-of-epsilon-meshing-unif-in-n} reduces to showing that:

\begin{lem} Fix $ \delta > \varepsilon >0$ and $C>0$. Then we have 
\begin{equation}
 \inf_{n \geq 1} \mathbb{P}\Big( \mbox{there are no $(\delta, \varepsilon,n)$-{bad} node in $ \mathrm{Box}_{C,n}$}\Big) \xrightarrow[ \varepsilon \to 0]{} 1.
\label{eq:nobad}
\end{equation}
\label{lem:nobad}\end{lem}
\begin{proof}
To prove this, notice first that  for a node to be $(\delta, \varepsilon,n)$-bad, its descendance needs first to survive for at least $ \varepsilon n$ unit of times, and then be of size less than $ \varepsilon n$ every $ \varepsilon n$ unit of times (for otherwise it would contain a $  \varepsilon n$-node) for an additional time at least $ (\delta - 2\varepsilon) n$. Combining \eqref{eq:survival-one-tree} and Corollary \ref{lem:bound proba long thin pop}, the probability of such an event is bounded above by 
\begin{equation}
 \mathrm{Cst} \cdot \frac{1}{ \varepsilon n} \cdot  \mathrm{e}^{-\delta / \varepsilon}.
\label{eq:proba-one-bad-node}
\end{equation}
Moreover, similarly to~\Cref{lem:box}, we can suppose that all points in $ \mathrm{Box}_{C,n}$ have a descendance up to level $n$ that stays within $ \mathrm{Box}_{C',n}$ for some $C'> C$, with large probability. Then, conditionally on this high probability event, if there is a $(\delta, \varepsilon,n)$-{bad} node in $ \mathrm{Box}_{C,n}$, then there must be an $(\delta - \varepsilon, \varepsilon,n)$-{bad} node among the set$$ \{ ( i+ \frac{1}{2},  j \varepsilon n) : -C' n \leq i \leq C'n , 0 \leq j \leq \varepsilon^{-1}\}.$$
 By invariance by translation (Proposition \ref{prop:invariance}), and using the bound~\eqref{eq:proba-one-bad-node}, the complement of the probability in~\eqref{eq:nobad} is therefore bounded by
$$ 2C'n \frac{1}{ \varepsilon}\cdot  \mathrm{Cst} \cdot \frac{1}{ \varepsilon n} \cdot  \mathrm{e}^{-\delta / \varepsilon} \leq \mathrm{Cst'} \frac{ \mathrm{e}^{-\delta / \varepsilon}}{ \varepsilon^{2}}.$$
For fixed $ \delta>0$, this goes to $0$ as $ \varepsilon \to 0$ (uniformly in $n$) as desired.
\end{proof}

\subsection{Proof of Proposition~\ref{prop:cv-fixed-mesh-size}}
\label{sec:proof-cv-fixed-mesh-size}

We will now prove the convergence of $ n^{-1} \cdot \underline{F}^{\uparrow, ( \varepsilon n)}$ when $\epsilon$ is fixed and $n\to \infty$. To do so, we would like to argue that, as $n\to \infty$, the forest $\underline{F}^{\uparrow, ( \varepsilon n)}$ constrained to $\mathrm{Box}_{C,n}$ is determined by a number $\BigO{\epsilon^{-2}}$ of population slices $M_k([a,b[)$. Then, Proposition~\ref{coro:cv-of-tuple-of-pop-slices} would suffice to obtain the desired convergence. The difficulty is that such a tuple of population slices does not fully determine $\underline{F}^{\uparrow, ( \varepsilon n)}$ in the box, but rather the meshing of the dual forest, $\underline{F}^{\downarrow, ( \epsilon n)}$. We then use the dual version of the $(\delta,  \varepsilon, n)$-bad nodes  and  Proposition~\ref{prop: absolute-continuity-dual} to ensure that, with high probability, the information of $\underline{F}^{\downarrow, ( \epsilon n)}$ in the box determines $\underline{F}^{\uparrow, ( \varepsilon' n)}$ for $\epsilon'>\epsilon$, up to a small error in GH distance.

Let us go into more details. Fix $  \varepsilon>0$. Using again Lemma \ref{lem:box} we can restrict ourselves to $ \mathrm{Box}_{C,n}$ for some large $C$. We will use the scaling limits of the subpopulations to describe the tree $\underline{F}^{\uparrow, ( \varepsilon n)}$ inside the box $ \mathrm{Box}_{C,n}$. Specifically, for $0<\zeta < \varepsilon$, we shall consider the joint evolution of all subpopulations between two $\zeta n$-vertices, that are  the processes 
$$ \big(M_{k}( [ (i \zeta n, j \zeta n), ((i+1) \zeta n, j \zeta n)[) : k \geq 0\big) \quad \mbox{ for } - \frac{C}{\zeta} \leq i \leq \frac{C}{\zeta}, \quad 0 \leq j \leq  \frac{1}{\zeta}.$$
By Proposition \ref{coro:cv-of-tuple-of-pop-slices}, the evolution of these processes, as well as their extinction times, admit a scaling limit which can be described using independent Feller diffusions, whose details are not crucial in what follows. 

Let us now consider the subforest $ F^{\downarrow, ( \zeta n)}$ spanned by the $\zeta n$-vertices of $  \mathrm{Box}_{C,n}$ in the descending forest $ F^{\downarrow}$ (in orange in Figure~\ref{fig:convergence} below). It is clear that this subforest is determined by the discrete subpopulations defined above and their extinction times. We claim that it is actually a continuous measurable function of these processes. To justify this claim, let us define 
\begin{align}
\hat{M}_t([(x,y),(x',y)[)&:=\frac{1}{n}M_{\lfloor nt \rfloor}([(\lfloor xn \rfloor, \lfloor yn \rfloor),(\lfloor x'n \rfloor, \lfloor yn \rfloor)[), \nonumber\\
\hat{\tau}_{(x,y),(x',y)}&:=\frac{1}{n}\tau_{(\lfloor xn \rfloor, \lfloor yn \rfloor),(\lfloor x'n \rfloor, \lfloor yn \rfloor)}.
\label{eq:interpolation-rescaled-pop-for-dual-pointed-tree}
\end{align}
We start at time $k=0$ with $2\lfloor C/\zeta \rfloor+1$ ancestral lines, at $i= -\lfloor C/\zeta \rfloor\zeta, \dots, \lfloor C/\zeta \rfloor\zeta$. Then, each time an increment of time $\zeta$ has passed:
\begin{itemize}
\item for $i=0$, the ancestral line from $(0,k\zeta)$ continues straight up to $(0,(k+1)\zeta)$;
\item for $i\geq0$, if the ancestral line from $(i,k\zeta)$ continues up to $(i',(k+1)\zeta)$, then the ancestral line from the next vertex $(j,k\zeta)$ -- where $j$ is the smallest abscissa $\ell>i$ such that there was an ancestral line at $(\ell,k\zeta)$ -- continues to the vertex $(i'\zeta+\hat{M}_{k\zeta}([(i,k\zeta),(j,k\zeta)[),(k+1)\zeta)$  -- in particular, the lines from $(i,k\zeta)$ and $(j,k\zeta)$ merge if the population between them has died out by time $(k+1)\zeta$, i.e. if $\hat{\tau}_{(i,k\zeta),(j,k\zeta)}\leq \zeta$;
\item likewise for negative $i$;
\item for any integer $-C/\zeta\leq i \leq C/\zeta$ such that the previous step does not produce a ancestral line continuing at $(i\zeta,(k+1)\zeta)$ (which is possible since the values of $\hat{M}_{(k+1)\zeta}[((i,k\zeta),(j,k\zeta))[$ are not always multiples of $\zeta$), we add a new ancestral line starting at $(i\zeta,(k+1)\zeta)$;
\end{itemize}
We proceed until time $1$ (more precisely, the last update of the ancestral lines is at time $\lfloor 1/\zeta \rfloor \zeta$, and then the lines continue upwards to $1$ without any change). This way, for $n$ large enough (so as not to have any troubles with the discretization given in~\eqref{eq:interpolation-rescaled-pop-for-dual-pointed-tree}), we have obtained the rescaled subforest $n^{-1}{F}^{\downarrow,(\zeta n)}$ from a number $(2C+1)\zeta^{-2}$ of rescaled population slices $\hat{M}_t([v,v'[)$ and their respective extinction times $(\hat{\tau}_{v,v'})$, as their image by the measurable and continuous function $G^{(\zeta)}$, which is defined as the construction above. Using~\Cref{coro:cv-of-tuple-of-pop-slices}, we deduce that it converges in law to the same function, applied to the limit tuple of $(2C+1)\zeta^{-2}$ independent Feller diffusions and their respective extinction times. 

\begin{figure}[!h]
 \begin{center}
 \includegraphics[width=12cm]{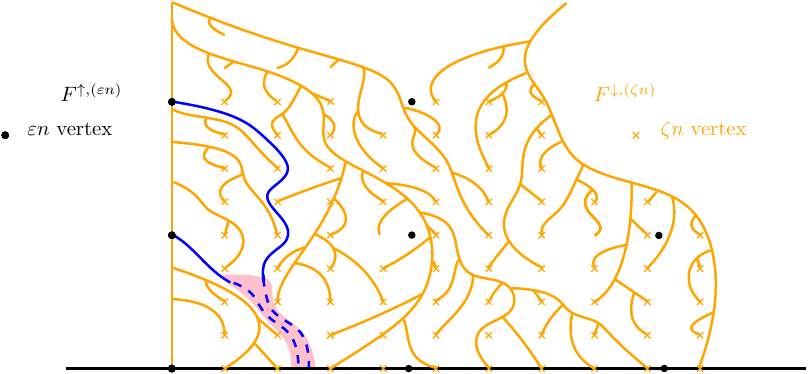}
 \caption{Illustration of the control of $ F^{\uparrow, (\varepsilon n)}$ by $ F^{\downarrow, (\zeta n )}$. As soon as  two branches of $ F^{\uparrow, (\varepsilon n)}$ are in the same interval (in red) of $ F^{\downarrow, (\zeta n )}$ they must meet within $\delta n$ if there is no $( \delta, \zeta, n)$ bad node.} \label{fig:convergence}
 \end{center}
 \end{figure}
 To simplify the reasoning, let us suppose that $  \varepsilon/\zeta \notin \mathbb{Q}$. Because of this, and since the Feller diffusions have densities at all times, none of the points in $ F^{\downarrow, ( \zeta n)}$ can be identified with an $ \varepsilon n$-node in the scaling limit. Clearly, the knowledge of  $ F^{\downarrow, ( \zeta n)}$ gives some constraint on $F^{\uparrow, ( \varepsilon n)}$ inside $ \mathrm{Box}_{C,n}$, but not enough to control its metric structure. The problem comes from the fact, although we know when two ancestral linages of some $ \varepsilon n$-node are in the same ``interval'' (in pink in Figure \ref{fig:convergence}), we do not know whether those two lineages meet or not. However, it is easy to see that two such lineages must meet within distance $\delta n$ as soon as there is no \emph{dual} $(\delta, \zeta, n)$-bad node in the box, i.e. a bad mode of the descending forest. The fact that there is no such bad point in the box with high probability is  implied by the contiguity of the descending forest with respect of the ascending ones for the reversed block model (Proposition~\ref{prop: absolute-continuity-dual})  and the previous Lemma~\ref{lem:nobad}.

 We can then conclude that with high probability, the subforest $ F^{\uparrow, ( \varepsilon n)}$ (restricted to the box) is known up to $ \delta n$ in Gromov--Hausdorff distance using $ F^{\downarrow, ( \zeta n)}$. Since the latter converges in distribution for any fixed $ \zeta>0$, we get the result.

\section{Extensions}
\label{sec:extensions}

In this section, we discuss extensions of the model that we have studied in this paper, in two different directions. The first one goes beyond the Brownian regime, in the case where the brick distribution $\rho$ is heavy-tailed. The second one is a continuous-time model, where the bricks now have random, continuum heights.

\subsection{Stable brick distributions}

If we consider brick distributions with heavy tails, we might expect two different types of asymptotic behavior. Indeed, suppose that $\beta$ and $\theta$ are respectively in the domain of attraction of an $\alpha$- (resp.\ $\alpha'$-) stable law, with $1<\alpha,\alpha'\leq2$. Then, if, say $\alpha'<\alpha$, we expect that the fluctuations of $\theta$ prevail, so that, asympotically, the associated trees should essentially behave like independent $\alpha'$-stable trees. Similarly, if $\alpha'>\alpha$, then the dual trees should behave asymptotically like independent $\alpha$-stable trees. Now, if both $\beta$ and $\theta$ have $\alpha$-stable tails, then neither prevails on the other, and we expect to see a new type of limiting object, related to Lévy processes with both positive and negative jumps.

The study of either of these regimes requires more technical arguments than the Brownian regime. Indeed, in those cases, the lack of second moments for $\rho$ entails that the size-biased bottom of the root brick has infinite expectation, and it also prevents us from using the arguments of the present paper to prove that subpopulations are martingales. In a work in progress, we are tackling those hurdles to prove the expected limit behaviors.

\subsection{Continuous-time model}

Let us consider a variant of the case where $\rho=\beta\otimes\eta$, that we describe informally, similarly to~\Cref{subsec:local-cata-bgw}. Let $\mu$ and $\nu$ be probability distributions on $\Z_{\geq2}, \Z_{\geq1}$ respectively, with finite third moments, and with respective expectations $Z_{\mu},Z_{\nu}-1$. We start at time $t=0$ with countably many individuals, represented by the nodes on $\Z$, and we equip them with independent pairs of independent exponential clocks $(X,X')$ of respective parameters 1 and $\lambda=Z_{\mu}/Z_{\nu}$. Then, when the first clock rings, if it is one of the first type, the corresponding individual gives birth to children according to $\mu$; if it is one the second type, a number of individuals on its right, starting with itself, die according to $\nu$. We continue this recursively on the descendants of the original individuals. (Note that a.s. no two clocks ring simultaneously.) This yields a bi-infinite forest $\mathscr{F}$, that we transform into a pointed tree $\underline{\mathscr{F}}$ as before, by grafting it on the real line and pointing it at 0.

Let us now sketch the arguments to transfer the results that we have proven on discrete brick walls to this new variant. We can construct a coupling between $\underline{\mathscr{F}}$ and a sequence $\big(\underline{F}^{\uparrow (N)} : N \geq 1\big)$ of forests with discrete edge lengths, induced by brick walls, such that $\underline{\mathscr{F}}$ is well approximated by $\underline{F}^{\uparrow (N)}$, as $N \to \infty$. Let $N$ be a positive integer such that $1+\lambda<N$, and let $\ell_N:=\ln{\frac{N}{N-1}}$. For each individual $i$ in $\mathscr{F}$, we define the following discrete approximations of the clocks $(X^{(i)},X'^{(i)})$ associated to~$i$:
\[
Y^{(i)}:=\ell_N\Big\lfloor \frac{X^{(i)}}{\ell_N}\Big\rfloor, \ \ Y'^{(i)}:=\ell_N\Big\lfloor \frac{X'^{(i)}}{\ell_N}\Big\rfloor.
\]
We then pick the exact same birth/death events after each discrete waiting time, as for their continuum counterparts: this yields a forest ${F}^{\uparrow (N)}$ where edge lengths are multiples of $\ell_N$.

By construction, $Y^{(i)}/\ell_N$ and $Y'^{(i)}/\ell_N$ are geometric variables of parameters $1-1/N$ and $1-\lambda/N$ respectively. Thus, the forest ${F}^{\uparrow (N)}$ has the same law (up to rescaling time by $\ell_N$) as the primal forest of a brick wall model whose brick distribution $\rho_N$ is as follows:
\begin{itemize}
\item $\rho_N(1,k) =(1/N)\mu(k), \, \forall\, k\geq2$
\item $\rho_N(k,1) =(\lambda/N)\nu(k-1), \, \forall\, k\geq2$
\item $\rho_N(1,1)= 1-1/N-\lambda/N$.
\end{itemize}
This model is critical, with both marginals $\eta_N$ and $\beta_N$ having finite third moments. We compute the associated variance $\sigma_N$ given by~\eqref{eq:sigma-definition}:
\[
\sigma_N^2=\frac{\Expect{\mu^2}-2\Expect{\mu}+\lambda(\Expect{(\nu+1)^2}-2\Expect{\nu+1})}{N+\Expect{\mu}-1-\lambda}.
\]
By the results of the previous sections, if we take bricks with height 1 as before, for fixed $N$ the discrete primal forest, rescaled by $1/n$, converges as $n \to \infty$ to the Brownian forest of parameter $\sigma_N$. As the actual the discrete-time approximation $\underline{F}^{\uparrow (N)}$ of $\underline{\mathscr{F}}$ is rescaled in time by $\ell_N$, for fixed $N$, $\frac{1}{n}\underline{F}^{\uparrow (N)}$ thus converges as $n \to \infty$ to a Brownian forest of parameter $\sigma_N/\sqrt{\ell_N}$. We see that this scaling parameter behaves like $\sigma=\left[\Expect{\mu^2}-2\Expect{\mu}+\lambda(\Expect{(\nu+1)^2}-2\Expect{(\nu+1)})\right]^{1/2}$, as $N\to\infty$.

Therefore, we expect that $\frac{1}{n}\underline{\mathscr{F}}$ itself converges to a Brownian forest of parameter $\sigma$. To prove this, we can establish a bound on the Gromov--Hausdorff distance between $\frac{1}{n}\mathrm{Ball}_n(\underline{\mathscr{F}})$ and $\frac{1}{n}\mathrm{Ball}_n(\underline{F}^{\uparrow (N)})$, going to zero as $N \to \infty$, uniformly in $n$. We have a natural correspondence between $\underline{\mathscr{F}}$ and $\underline{F}^{\uparrow (N)}$: from the definition of the coupling, they have the same exact genealogical structure, and differences in edge lengths of at most $\ell_N$ (with full probability), which induces a correspondence $\mathcal{R}$ where, for $x,y \in \underline{F}^{\uparrow (N)}, x',y' \in \underline{\mathscr{F}}$, and $(x,x'), (y,y') \in \mathcal{R}$: 
\[d(x',y') \cdot (1-\ell_N) \leq d(x,y) \leq d(x',y').\]
Thus, for any $n$, the distortion of this correspondence, restricted to balls of radius $n$, rescaled by $1/n$, is smaller than $\ell_N$. This suffices to conclude.

\printbibliography

\end{document}